     \def\section{\@startsection{section}{1}%
     \z@{.7\linespacing\@plus\linespacing}{.5\linespacing}%
     {\bfseries
     \centering
     }}
     \def\@secnumfont{\bfseries}
\newtheorem{theorem}{Theorem}[section]
\newtheorem{corollary}[theorem]{Corollary}
\newtheorem{lemma}[theorem]{Lemma}
\newtheorem{proposition}[theorem]{Proposition}
\theoremstyle{definition}
\newtheorem{definition}[theorem]{Definition}
\newtheorem{remark}[theorem]{Remark}
\newtheorem{example}[theorem]{Example}
\newcommand{\R}{\mathbb{R}}
\newcommand{\X}{\mathbb{X}}
\newcommand{\U}{\mathbb{U}}
\newcommand{\C}{C}
\newcommand{\F}{F}
\renewcommand{\S}{S}
\newcommand{\T}{T}
\newcommand{\caS}{\mathcal{S}}
\newcommand{\caC}{\mathcal{C}}
\newcommand{\caF}{\mathcal{F}}
\newcommand{\caT}{\mathcal{T}}
\newcommand{\e}{\varepsilon}
\newcommand{\Int}{\mathbf{Int}}
\title[Combinatorial Models for Phylogenetics]{Combinatorial Topological Models for Phylogenetic Networks and the Mergegram Invariant}
\author[Paweł Dłotko, Jan F Senge and Anastasios Stefanou]{}
\subjclass{Primary: 55N31; Secondary: 55U10, 92D15}
\keywords{Phylogenetic trees and networks, topological data analysis, filtration, clique-complex, mergegram}
\thanks{$^*$Corresponding author: Anastasios Stefanou}
\begin{document}

\begin{abstract}
Mutations of genetic sequences are often accompanied by their recombinations, known as phylogenetic networks. These networks are typically reconstructed from coalescent processes that may arise from optimal merging or fitting together a given set of phylogenetic trees. Nakhleh formulated the phylogenetic network reconstruction problem (PNRP): Given a family of phylogenetic trees over a common set of taxa, is there a unique minimal phylogenetic network whose set of spanning trees contains the family?

Inspired by ideas from topological data analysis (TDA), we devise lattice-diagram models for phylogenetic networks and filtrations, the cliquegram and the facegram, both generalizing the dendrogram (filtered partition) model of phylogenetic trees. Both models allow us to solve the PNRP rigorously. The solutions are obtained by taking the join of the dendrograms on the lattice of cliquegrams or facegrams. Furthermore, computing the join-facegram is polynomial in the size and number of the input trees.

Cliquegrams and facegrams can be challenging to work with when the number of taxa is large. We propose a topological invariant of facegrams and filtrations, called the mergegram, by extending a construction by Elkin and Kurlin defined on dendrograms. We show that the mergegram is invariant of weak equivalences of filtrations which, in turn, implies that it is a 1-Lipschitz stable invariant with respect to M\'emoli's tripod distance. The mergegram, can be used as a computable proxy for phylogenetic networks and filtrations of datasets. We illustrate the utility of these new TDA-concepts to phylogenetics, by performing experiments with artificial and biological data.
\end{abstract}

\maketitle

\centerline{\scshape
Pawel Dlotko$^{{\href{mailto:pdlotko@impan.pl}{\textrm{\Letter}}}1}$,
Jan Felix Senge$^{{\href{mailto:janfsenge@uni-bremen.de}{\textrm{\Letter}}}1,2}$
and Anastasios Stefanou$^{{\href{mailto:stefanou@uni-bremen.de}{\textrm{\Letter}}}*2}$}

\medskip

{\footnotesize
 \centerline{$^1$Dioscuri Centre in Topological Data Analysis, Institute of Mathematics, Polish Academy of Sciences, Poland.}
} 

\medskip

{\footnotesize
 \centerline{$^2$Institute for Algebra, Geometry, Topology and its Applications (ALTA), Department of Mathematics, University of Bremen, Germany.}
}

\bigskip

\section{Introduction}
\subsection{Motivation and related work}
\textbf{Dendrogram, Phylogenetic tree, and the Tree of Life}
Let us consider a set $X$ that contains, simply speaking, a collection of genes, also called \emph{taxa}. Their evolution can be tracked back in time, as in the \emph{coalescent model} \cite{kingman1982genealogy}. The model of this type, for any pair of elements in taxa $X$, keeps track of their least common ancestor. 
Doing so, at every time $t$ the tree is represented by a partition of $X$, called a \emph{dendrogram} or a \emph{phylogenetic tree}. 

The mutation-driven temporal evolution of the considered taxa $X$ can be modeled by several stochastic processes (e.g.~Markovian) that can be used for the analysis of the evolution of genetic sequences \cite{kingman1982genealogy}.  
Such an analysis also yields a dendrogram as a summary of the evolution of $X$. 

Dendrograms are also modeled and studied via \emph{tropical geometry} \cite{maclagan2021introduction}. The dendrogram is modeled as a special type of metric space, called an \emph{ultrametric}; the geometry of the space of dendrograms has been studied in \cite{billera2001geometry}. See also \cites{feichtner2006complexes, kozlov2008, kozlov1999complexes, lin2017convexity} for related work on directed trees.
In both cases, a dendrogram serves as a summary of the evolution of a given taxa set. 

In data science, dendrograms are also well known. They are the basic structures of \emph{hierarchical clustering}~\cite{carlsson2010characterization}. In this case, a collection of data points $X$ taken from a metric space will be connected in a dendrogram at the level equal to the diameter of $X$. 
An important distinction between the methods presented here and the hierarchical clustering is that there is typically no canonical taxa set, which may bring additional labelling information into the structure of hierarchical clustering.

A dendrogram is a special case of a \emph{treegram}  over $X$ \cite{smith2016hierarchical}, which is a nested sequence of \emph{subpartitions of $X$} (i.e.~partitions of subsets of $X$). 
In contrast to dendrograms, in treegrams different taxa elements (leaves) may appear at different times, as indicated in \textit{Fig.~\ref{fig:treegram}}.

As shown by Carlsson and F.~M\'emoli~\cite{carlsson2010characterization}, every dendrogram $\theta_X$ over $X$ gives rise to an ultrametric $U_X:X\times X\to \R_{\geq0}$ (meaning a metric, for which the triangle inequality takes the form {\sloppy$U_X(x,z) \leq \allowbreak \max\{U_X(x,y),U_X(y,z)\}$}, for all triples $x,y,z$ in $X$). In particular, they showed that the assignment $\theta_X\mapsto U_X$ yields an equivalence between dendrograms and ultrametrics. 
Thm.~6 in~\cite{smith2016hierarchical} by Z.~Smith, S.~Chowdhury, and F.~M\'emoli, generalizes this equivalence to an equivalence between treegrams $\caT _X$ and \emph{symmetric ultranetworks} $U_X$ (a symmetric real-valued matrix such that {\sloppy$U_X(x,z)\leq \allowbreak \max\{U_X(x,y),U_X(y,z)\}$}). 
For an example illustrating this equivalence, see \textit{Fig.~\ref{fig:treegram}}.

\begin{figure}[ht]
    \centering
    \includegraphics[width=0.7\textwidth]{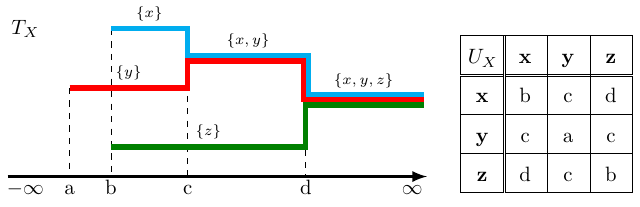}
    \caption[width=\textwidth]{{\bf Example of a treegram and its symmetric ultranetwork.} On the left, a treegram $\caT _X$ build over $X = \{x,y,z\}$, where the horizontal axis represents time (traced backwards in the phylogenetic tree setting). 
    On the right, the corresponding symmetric ultranetwork $U_X$.}
    \label{fig:treegram}
\end{figure}

\textbf{Persistent homology diagrams}
As discussed above, the dendrogram is the main model describing the evolution of species in phylogenetics, and it is heavily used in the hierarchical clustering of datasets (viewed as finite metric spaces). 
Due to their combinatorial nature, dendrograms can neither be directly vectorized nor used in a machine learning pipeline. However, in the case of single-linkage hierarchical clustering there is a process called the \emph{Elder-rule} which extracts a \emph{persistence diagram} from the single linkage dendrogram as in \cite{curry2018fiber}*{Defn.~3.7}. Persistence Diagrams are multisets of points in the extended plane $(\R \cup \{ \infty \})^2$ and they have been the main focus of the research field of Topological Data Analysis (TDA) \cite{chazal2009proximity}.
The persistence diagram obtained in this way corresponds to the $0$-dimensional persistent homology diagram with respect to the \emph{Vietoris-Rips filtration} of the dataset. This is a special case of the more broad method of $p$-dimensional persistent homology diagrams of filtrations, which capture the evolution of $p$-dimensional cycles across the filtration of the dataset \cite{edelsbrunner2022computational}. 

\begin{figure}[ht]
\centering
\includegraphics[width=0.9\linewidth]{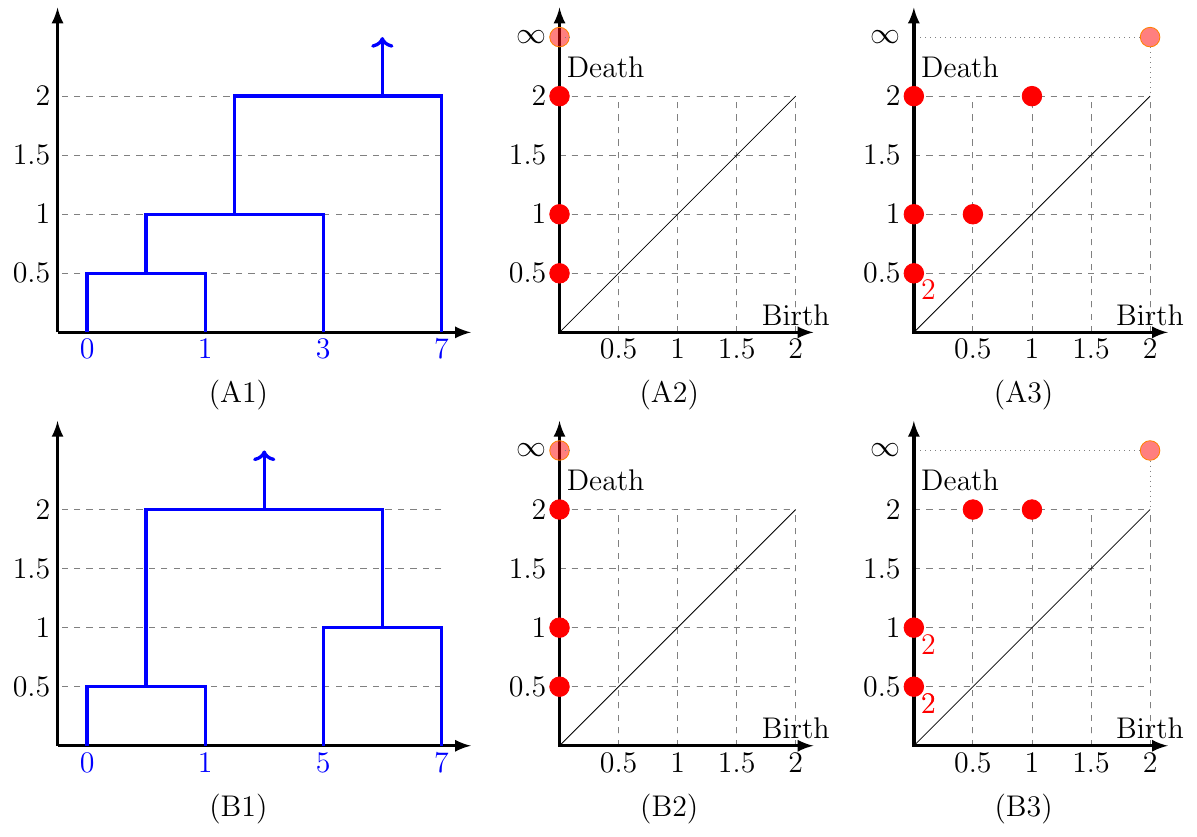}
\caption[]{{\bf Dendrograms, their persistence diagrams and mergegrams.} 
Consider the finite metric subspaces $X=\{0,1,3,7\}$ and $Y=\{0,1,5,7\}$ of $(\R,|\cdot |)$. (A1) shows the single-linkage dendrogram of $X$, (A2) shows the $0$-dimensional persistent homology diagram of $X$, and (A3) shows the mergegram of the single-linkage dendrogram of $X$. (B1) shows the single-linkage dendrogram of $Y$, (B2) shows the $0$-dimensional persistent homology diagram of $Y$, and (B3) shows the mergegram of the single-linkage dendrogram of $Y$. The datasets $X$ and $Y$ have the same $0$-dimensional persistent diagrams but different mergegrams.}
\label{fig:dendrogram_persistence_mergegram}
\end{figure}

\textbf{The mergegram of a dendrogram}
Persistent homology diagrams will not always be able to distinguish different metric spaces (see \textit{Fig.~\ref{fig:dendrogram_persistence_mergegram}}). In \cite{elkin2020mergegram}, the authors considered a new invariant for datasets, called the \emph{mergegram}. The mergegram is defined for every dendrogram as the multiset of interval lifespans of the blocks of the partitions across the dendrogram (in other words, the multiset of the intervals of all the edges in the underlying tree of the dendrogram); in the case of datasets, the mergegram associated with a dataset is the mergegram of the single linkage dendrogram of the dataset. In particular, the authors showed that the mergegram is a stronger invariant than the $0$-dimensional persistent homology diagram of datasets, i.e. if two metric spaces have the same mergegram then they have the same $0$-dimensional persistence diagrams, but not vice versa (see \textit{Fig.~\ref{fig:dendrogram_persistence_mergegram}}). 

By construction of the mergegram, as a multiset of intervals, we can also think of it as a persistence diagram and therefore being equipped with the interleaving and bottleneck distances \cite{chazal2009proximity}. The mergegram -when viewed as persistence diagram- (i) is stable and at the same time (ii)  it is naturally equipped with a family of computable metrics (e.g. Wasserstein and Bottleneck distances) while (iii) still capturing essential information about the dendrograms. Another consequence of viewing mergegrams as persistence diagrams is the possibility of using existing machinery/vectorization of persistence diagrams for a subsequent data analysis pipeline.

\textbf{Loops in the tree of life}
In real world, mutations of genetic sequences are often accompanied by their recombinations. Such joint phenomena are modeled by a \emph{phylogenetic network} that generalizes the notion of a phylogenetic tree. The most common model for visualizing those networks is that of a \emph{rooted directed acyclic graph (rooted DAG)}, \cite{nakhleh2010evolutionary}. 
The recombination events will give rise to loops in the DAG.
These DAGs are typically generated from coalescent processes that may arise from optimal merging or fitting together a given set of phylogenetic trees. This process is known as \emph{the phylogenetic network reconstruction} \cite{nakhleh2010evolutionary}. 
L.~Nakhleh proposed the following formulation which will be referred to in this paper as the \emph{Phylogenetic Network Reconstruction Problem} (PNRP), (pg.~130 in \cite{nakhleh2010evolutionary}):
Given a family $\caT$ of phylogenetic trees over a common set of taxa $X$, is there a unique minimal phylogenetic network $N$ whose set of spanning trees $\T (N)$ \emph{contains} the family $\caT$?
There are different answers to PNRP, due to the different ways of defining what a `minimal network' is (based on different optimization criteria). 
The existing methods and software use DAG-type models to describe the phylogenetic networks \cites{cardona2008perl,huson2008summarizing,huson2012dendroscope,solis2017phylonetworks,than2008phylonet}. 

\subsection{Overview of our contributions}
In this work, we propose two lattice-theoretic models for phylogenetic reconstruction networks. These approaches have several advantages. Firstly, each of these lattice-models provides us with a unique solution to the PNRP (see \textit{Prop.~\ref{prop:inverse}} and \textit{Rem.~\ref{rem:PNRP}}).
Moreover, our models are constructed using tools
from TDA such as networks, filtrations \cite{memoli2017distance}, persistence diagrams \cite{chazal2009proximity}, mergegrams \cite{elkin2020mergegram}, and their associated metrics \cites{bubenik2015metrics,chazal2009proximity}. In addition, those lattice models provide us with interleaving or Gromov-Hausdorff type of metrics for comparing pairs of phylogenetic trees and networks over different sets of taxa \cite{kim2023interleaving}. Below, we give a detailed overview of our results. 
In \textit{Tab.~\ref{tab:correspondence}}, we listed the difference correspondences of lattices.

\textbf{Our first order viewpoint on the phylogenetic reconstruction process}
In the setting of phylogenetic trees, we cannot model recombination phenomena, that would naturally correspond to a loop in the structure (not present in trees). Hence, the simple model of trees (modeled with dendrograms) should be extended to rooted DAGs with an additional structure allowing to keep track of the evolution of taxa. Such a structure is modeled, in this work, by a \emph{cliquegram} (see \textit{Defn.~\ref{dfn:cliquegram}}), that is, a nested family of sets of cliques over $X$, as presented in \textit{Fig.~\ref{fig:cliquegram}}. 
The name cliquegram comes from that graph theoretical notion of (maximal) cliques, i.e. the (largest) subsets of vertices in a graph whose members are all connected by edges in the graph. Note that when the taxa sets decorating the edges of the cliquegram are forgotten, a rooted DAG is recovered.

\begin{figure}[ht]
    \centering
    \includegraphics[width=0.7\textwidth]{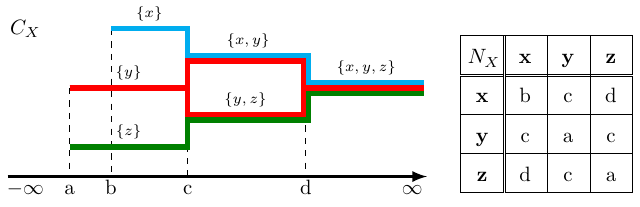}
    \caption[width=\textwidth]{{\bf Cliquegram and its phylogenetic network.}
    On the left there is a cliquegram $\caC _X$, on the right - the associated phylogenetic network $N_X$.}
    \label{fig:cliquegram}
\end{figure}
In the cliquegram, presented in \textit{Fig.~\ref{fig:cliquegram}}, the time axis is represented by a horizontal line. In order to recover the taxa set of biological entities at the time $t$, a vertical line should be placed at time $t$ and the cliques at the intersection of the structure with that line will indicate the taxa set (a.k.a.~taxonomy) of those entities. For instance, for $t \in (c,d)$, we have two entities; the first having a taxa set $\{x,y\}$ and the second having a taxa set $\{y,z\}$. Note that the intersection of their taxonomies, $\{y\}$ (present at times $t<c$), is a common taxon of both entities and creates the loop in the structure. 
That implies the existence of their common predecessor, whereas the union of their taxonomies, $\{x,y,z\}$ (present at times $t > d$), implies the existence of their common ancestor. 

While dendrograms naturally correspond to ultrametrics, cliquegrams also have a corresponding structure as a subclass of symmetric networks, which we call \emph{phylogenetic networks} (see \textit{Defn.~\ref{dfn:phylo-net}}). There is an isomorphism between the lattice $\mathbf{Net}(X)$, of the phylogenetic networks $N_X$ over $X$, and the lattice $\mathbf{Cliqgm}(X)$ of all cliquegrams $\caC_X$ over $X$ explicitly defined in \textit{Prop.~\ref{prop:equivalence}}. See the right part of the \textit{Fig.~\ref{fig:cliquegram}} as an example of the correspondence; Given $a,b \in \{x,y,z\}$, the entry of the matrix at the position $(a,b)$ indicates the time of coalescence of the taxa $a$ and $b$ that can be read out of the cliquegram by taking the time instance when they are merged together. 

One of the main observations of this work is that the lattice structure of cliquegrams over $X$, as opposed to ordinary rooted DAGs, enables us to provide a \emph{unique} answer to the PNRP, meaning that the PNRP has a unique solution in the setting of cliquegrams: Given any family $\caF$ of treegrams over the common set of taxa $X$, there exists a unique minimal cliquegram $\caC_X^{(\caF)}$ (w.r.t.~the partial order $\leq$ in the lattice of all cliquegrams over $X$) containing each treegram in $\caF$. The cliquegram $\caC_X$ is given explicitly as the join of $\caF$, $\vee\caF$ (see \textit{Rem.~\ref{rem:PNRP}} and \textit{Prop.~\ref{prop:clique-dense}}). See also \textit{Fig.~\ref{fig:treegram-dec}}. 
\begin{figure}
    \centering
    \includegraphics[width=1\textwidth]{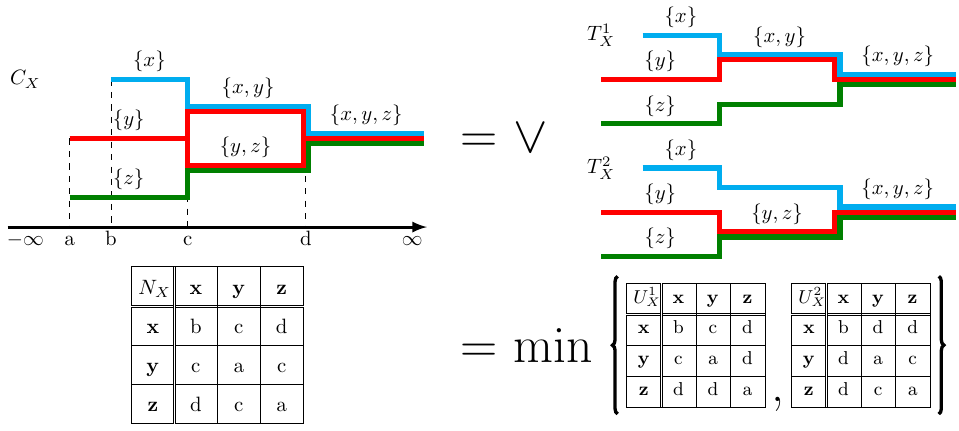}
    \caption{{\bf The join of two treegrams in the cliquegram setting.}
    An example of cliquegram $\caC_X$ over $X$ obtained as the join of two treegrams over $X$. The associated phylogenetic network $N_X$ of the cliquegram $\caC_X$ can be obtained as the pointwise minimum of the associated ultranetworks of the spanning treegrams of $\caC_X$.}
    \label{fig:treegram-dec}
\end{figure}
 This observation allows us to devise a simple algorithm for computing the join-cliquegram from any given family of treegrams. 
 The join-cliquegram $\caC_X$ of the family $\caT$, is then given by the maximal cliques of the Vietoris-Rips complex  of $N_X$. 
 A Python implementation of this algorithm is provided in \cite{ourgitcode}.

Since the cliquegram is obtained by a phylogenetic network $N_X:X\times X\to\R$ which is a weighted graph, and thus a one-dimensional structure, the approach introduced in this section is referred to as \emph{first order approach}.

\textbf{Our higher order viewpoint of the phylogenetic reconstruction process}
The notion of a phylogenetic network $N_X$ (see \textit{Defn.~\ref{dfn:phylo-net}}) extends, in a natural way, to the notion of a \emph{filtration} $\F _X$ (see \textit{Defn.~\ref{dfn:diameter}}). 
In what follows, for the purpose of visualizing the evolution of maximal faces in a filtration, filtrations will be represented as \emph{facegrams} (see \textit{Defn.~\ref{dfn:facegram}}) which are structures that generalize cliquegrams and dendrograms (see Rem.~\ref{rem:facegram generlizes cliquegram}). 
A facegram encodes the evolution of maximal faces across a filtration of simplicial complexes. In the special case where the filtration is the Vietoris-Rips filtration of a phylogenetic network $N_X$ over $X$, the associated facegram is exactly the cliquegram of $N_X$.

The process of constructing a facegram from a given filtration turns out to be an equivalence (in particular, a bijection) between filtrations and facegrams, in the following sense:
Let $X$ be a set of taxa. There is an isomorphism between the lattice $\mathbf{Filt}(X)$ of all filtrations over $X$ and the lattice $\mathbf{Facegm}(X)$ of all facegrams over $X$. The maps between them are explicitly defined in \textit{Thm.~\ref{thm:equivalencetwo}}. For an example of the equivalence, consult Fig.~\ref{fig:facegram-filt}; on the left, the facegram built on taxa $\{x,y,z\}$. On the right, is the isomorphic filtration.
\begin{figure}
    \centering
    \includegraphics[width=0.5\textwidth]{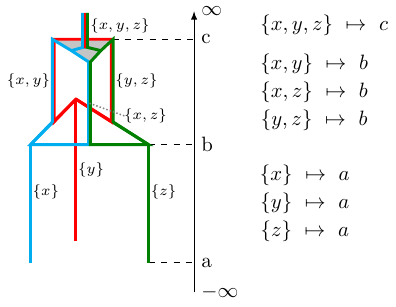}
    \caption[width=\textwidth]{{\bf Facegram and its filtration.} An example of facegram and its associated filtration.}
    \label{fig:facegram-filt}
\end{figure}
By definition, every cliquegram is a particular type of facegram (see Fig.~\ref{fig:facegram}). Hence, in a much broader sense, we can ask whether we can model a more broad phylogenetic reconstruction process as a join operation on the lattice of facegrams, instead of the more restricted setting of the lattice of cliquegrams, see Fig.~\ref{fig:facegram1}.
 \begin{figure}
     \centering
    \includegraphics[width=1\textwidth]{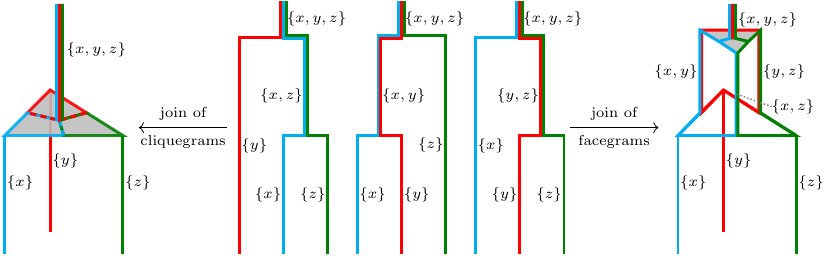}
     \caption{{\bf Comparison of the join of treegrams as cliquegrams and facegrams.} 
     Consider the set of three treegrams in the middle of the picture. The structure on the left is obtained by taking the join operation on those treegrams (viewed as cliquegrams) in the lattice of cliquegrams. On the right, the join of the same treegrams (now viewed as facegrams) in the lattice of facegrams is presented. Note that the cliquegram can be obtained from the facegram on the right by squashing the three loops. Hence, sometimes, the facegram contains reticulation loops
     that are not present in the cliquegram.}
     \label{fig:facegram1}
 \end{figure}
It turns out that in this general setting, where treegrams are viewed as facegrams in place of cliquegrams, the following holds:
\begin{enumerate}
    \item The facegram-join operation yields more information than the cliquegram-join operation, as the cliquegram can be obtained from the facegram (see Fig.~\ref{fig:facegram1}). 
      \item In the setting of facegrams, the PNRP still admits a unique solution, in the sense that for any family $\caF$ of treegrams (viewed as facegrams) over $X$, there exists a unique minimal facegram $\caC_X$ (w.r.t.~the partial order $\leq$ of facegrams) that contain each treegram in $\caF$. Specifically, that facegram $\caF _X$ is given by the join-span $\vee\caF$ (see \textit{Thm.~\ref{thm:filtration-irreducible}}). In fact, \textit{Thm.~\ref{thm:filtration-irreducible}} states that treegrams are precisely the join-irreducible facegrams. 
    \end{enumerate}

\noindent \textbf{Face-Reeb graph and mergegram invariants}
The facegram can become very difficult to visualize when the number of taxa is large. 
Hence, it is important for the applications to study invariants of facegrams that are easy to describe and visualize as well as equipped with a metric structure. This allows comparison of those descriptors (and therefore, indirectly, to allow for comparison of lattice-diagrams) which can be used for statistical inference.

Furthermore, for most machine learning algorithms we need the structure of a Hilbert space and thus a possibility to vectorize the facegram in a sensible way. Due to the weighted-graph nature of the facegram, such a representation should be sensitive to the edges and their weights.
 For that purpose, we propose two novel topological invariants of facegrams (and thus of filtrations too), (i) the face-Reeb graph of a facegram and (ii) the mergegram of a facegram (by extending a construction by Elkin and Kurlin defined on dendrograms \cite{elkin2020mergegram}).

We then show the mergegram is a 1-Lipschitz stable invariant, 
 while the face-Reeb graph is not stable. Our main result is that the mergegram is invariant of weak equivalences of filtrations, a stronger form of homotopy equivalence (see \textit{Thm.~\ref{thm:invariance of mergegram}}). The mergegram 
 can be used as a computable proxy for phylogenetic networks and also, more broadly, for filtrations of datasets, which might also be of interest to TDA.

\textbf{Motivation for introducing cliquegram, facegram and  mergegram}
In this work, we introduce the cliquegram and the facegram in order to obtain an analogous construction to the dendrogram of an ultrametric for the general case of a phylogenetic network and more broadly to a filtration, respectively. Dendrograms are in one-to-one correspondence with ultrametrics. However, dendrograms are useful (i) for visualizing ultrametrics and (ii) for obtaining invariants from ultrametrics, i.e. (a) the $0$-dimensional sublevel set persistence of a metric space $(X,d_X)$ can be obtained by applying the Elder rule on the dendrogram of the single linkage ultrametric of $(X,d_X)$, and (b) the mergegram of a dendrogram \cite{elkin2020mergegram}. In order to obtain analogous constructions to dendrograms for networks and filtrations, we first observed that partitions are a special case of clique-sets and clique-sets are a special case of face-sets over $X$. Combined with the fact that dendrograms are filtered partitions, the preceding argument motivated us to define cliquegrams  as filtered clique-sets and facegrams as filtered face-sets, respectively. Except for generalizing dendrograms, cliquegrams, and facegrams are also useful, since:
\begin{enumerate}
\item[(i)] (Biological motivation) Through those lattices we can model the join-span of a set of treegrams (viewed as cliquegrams or facegrams respectively), and thus obtain a phylogenetic structure that can accommodate loops.
\item[(ii)] (Mathematical motivation) By viewing filtrations as facegrams, we can craft new invariants of filtrations by crafting invariants of facegrams, such as the face-Reeb graph and the mergegram, those are invariants of filtrations which we couldn't obtain directly from the filtration but only through its facegram.
\end{enumerate}

\textbf{Computational complexity and experiments}
In the last section, we give polynomial upper bounds on the computational complexity of the phylogenetic network reconstruction process that is modeled as the join-facegram of a set of treegrams (see \textit{Thm.~\ref{thm:complexity}}). The same complexity upper bound applies when computing the mergegram invariant of the join-facegram. We provide implementations of our algorithms in Python. Next, we apply our algorithms to certain artificial datasets and to a certain benchmark biological dataset.

\subsection{Relation of our work with other research works}

\vspace{1em}
\textbf{Applying TDA for the analysis of phylogenetic networks.} 
Phylogenetic trees and networks have recently been injected to TDA. 
Most TDA approaches for analyzing phylogenetic trees/networks take either the approach of considering vertices as points in high-dimensional space (and use the distance in this space), e.g.~\cites{rabadan2019topological} or use the tree distances directly to construct Vietoris-Rips complexes to capture connected components as well as cycles in the networks for the analysis of the data, e.g.~\cites{TDA_topology_camara, TDA_topology_viral}. 
Other connections of phylogenetic networks with other topological constructions like Merge Trees \cites{munch2019ell, gasparovic2019intrinsic, yan2019structural}, Reeb graphs \cite{stefanou2020tree}, or use other graph representations from which topological information is extracted \cite{lesnick2020quantifying}.
In this work, on the other hand, a new diagrammatic model is constructed for phylogenetic networks, namely the facegram, and mathematical properties of this representation are established in terms of Nakhleh's phylogenetic reconstruction problem. Subsequent analysis is done on this model structure in terms of its mergegram invariant.

\textbf{Relation of facegrams with dendrograms and treegrams}
For a given partition $P_X$ of the set $X$, to every block in the partition corresponds a total complex, and thus, $P_X$ gives rise to a simplicial complex, given by the disjoint union of these total complexes. Hence, dendrograms (and treegrams) can be viewed as filtered complexes or equivalently filtrations. The corresponding facegram of that filtered complex is our dendrogram (resp.~treegram). However, both the dendrogram and the treegram model do not allow recombinations that would introduce loops to our tree of life.

\textbf{Relation of facegrams with formigrams}
 An extension of treegrams which allows for the formation of loops is the \emph{formigram} \cites{kim2018formigrams,kim2023interleaving}. 
By a formigram over $X$, here we mean a filtered subpartition $\S_X:\Int\to\mathbf{SubPart(X)}$ over the poset of closed intervals of $\R$, $\Int$. 
It should be noted that the PNRP also admits a solution in the lattice of formigrams, if we consider the join-operation on a given family of treegrams. However, the join of a set of treegrams (when viewed as formigrams) would still be a treegram, so no recombination events can be recorded/represented in this join-structure.
Facegrams, on the other hand, do accommodate recombinations (loops) when realized as joins of treegrams while maintaining an $\R$-filtration model for our tree of life. This point of view provides a generalization of the concept of dendrograms and treegrams that accommodates loops and at the same time it links facegrams to the machinery of filtrations from TDA (i.e.~$\R$-filtrations). 
Finally, we note that although the formigram is not the same as the facegram, these structures are embedded into a common generalized construction: Firstly, we observe that the notion of facegrams indexed over the poset of $\mathbb{R}$,  generalizes over an arbitrary poset $\mathbf{P}$. For the case of the poset  $\mathbf{P}=\mathbf{Int}$, given that the lattice of face-sets contains the lattice of  subpartitions, facegrams over the poset $\mathbf{Int}$, would be a construction that generalizes both the formigram model and our model of facegrams over the poset $\mathbb{R}$. 

\textbf{Relation to join-decompositions of poset maps} 
In this work, we also utilize the machinery of join-decompositions of order-preserving maps and their associated interleaving distances as developed in \cite{kim2023interleaving}. Specifically, we consider maps from the poset $(\R,\leq)$ to the lattice of face-sets which includes the lattice of subpartitions $\mathbf{SubPart}(X)$ (see \textit{Defn.~\ref{dfn:facegram}}), and we establish certain decompositions of our structures into indecomposables (see \textit{Thm.~\ref{thm:filtration-irreducible}}, \textit{Cor.~\ref{cor:filtration-irreducible}} and \textit{Prop.~\ref{prop:clique-dense}}, \textit{Cor.~\ref{cor:cliq-irred}}).

\textbf{Relation of mergegrams with other invariants}
Formigrams can be viewed as functors from $\mathbf{Int}$ to $\mathbf{SubPart}$, see \cite{kim2023interleaving}. In \cite{kim2024extracting} the authors  introduced the \emph{maximal group diagram} and the \emph{persistence clustergram}. In our work, we are studying the facegram construction as a functor from $\mathbb{R}$ to $\mathbf{Face}$ (which is different from the formigram functor) and we introduce three invariants of that construction, the face-Reeb graph, the mergegram, and the labeled mergegram. Note that dendrograms (as well as treegrams) can be viewed as facegrams and also as formigrams at the same  time. However, even in that case, our invariants are different from the ones used in \cite{kim2024extracting}. Indeed, suppose one has a dendrogram with two leaves $\{x,y\}$ merging at  some time instance $t$. Then, the maximal group diagram of the dendrogram will contain three  intervals $[0,\infty)$, $[0,\infty)$, $[t,\infty)$, corresponding to $\{x\}$, $\{y\}$, $\{x,y\}$, respectively, the persistent  clustergram will contain the three intervals $[0,t)$, $[0,t)$, $[0,\infty)$, corresponding to $\{x\}$, $\{y\}$, $\{x,y\}$,  respectively, whereas the labeled mergegram will contain the intervals $[0,t)$, $[0,t)$, $[t\infty)$,  corresponding to $\{x\}$, $\{y\}$, $\{x,y\}$, respectively. Although those invariants are different, the formalism is quite similar. It seems possible that one can define invariants for facegrams over the poset $\Int$ that may generalize all those invariants, but this topic was out of scope of this paper, so we didn't develop such ideas further.

\textbf{Appendix}
In the appendix we go into some more details for some concepts we just mention briefly in the main part. Namely, we showcase the underlying graph structure of the tree-, clique and facegram models, explain the data structure for their computation as well some more algorithmic and computational connections. Furthermore, we relegate some of the proofs of the next sections to the appendix.

\begin{figure}
\includegraphics[width=0.95\textwidth]{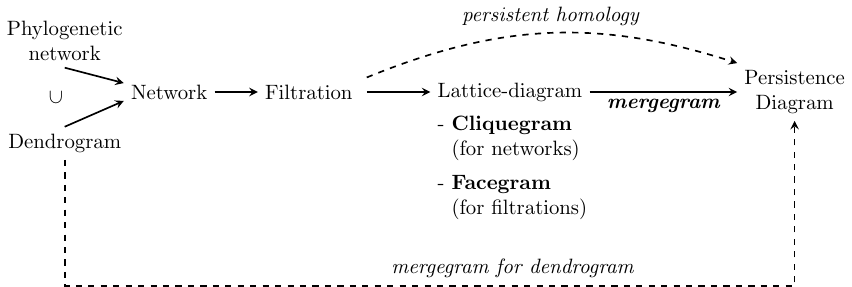}
\caption{{\bf Concepts overview.}
Overview of concepts examined and their relations to each other. Those in bold font are introduced in this work.}
\label{fig:total_overview}
\end{figure}

\section{Diagrammatic models}
In this chapter we recall certain results from the theory of lattices \cite{kim2023interleaving}, then we develop two different lattice-diagram models for visualizing phylogenetic reconstruction networks and more broadly filtrations, the \emph{cliquegram} and the \emph{facegram}, respectively, and we show that in each of those settings, the PNRP admits a unique solution.
The facegram, except being a model for phylogenetic networks, more broadly, it is also related to the notion of \emph{filtered simplicial complexes} that are used in applied topology. 

For an overview of the upcoming concepts and their relations as lattices, see \textit{Tab.~\ref{tab:correspondence}}.
\begin{table}[ht]
\centering
\resizebox{\textwidth}{!}{
\begin{tabular}{lrlll}
\multicolumn{2}{c}{\textbf{Lattice of ...}}
    && \textbf{Lattice of ...} & \textbf{References}
    \\[-0.2em]
\hline
\vspace{-1em}\\
\multicolumn{2}{l}{Dendrograms, $\mathbf{Dendgn}(X)$} 
    & $\rightleftarrows$
    & ultrametrics on $X$
    & {\tiny\itshape (Def.~\ref{dfn:partition})}, {\tiny\itshape (Ex.~\ref{ex:symmetricultranetwork})}, {\tiny\itshape \cite{carlsson2010characterization}}
    \\
\multirow{1}{*}{\rotatebox[origin=t]{270}{$\lhook\joinrel\rightarrow$}}
    & Partitions, $\mathbf{Part}(X)$
    & $\rightleftarrows$
    & equivalence relations on $X$
    & {\tiny\itshape (Def.~\ref{dfn:partition})}, {\tiny\itshape\cite{gratzer2011lattice}}
    \\[0.2cm]
\multicolumn{2}{l}{Treegrams, $\mathbf{Treegm}(X)$ }
    & $\rightleftarrows$ 
    & symmetric ultranetworks on $X$
    & {\tiny\itshape (Def.~\ref{dfn:partition})}, {\tiny \cite{smith2016hierarchical}}, {\tiny\itshape (Ex.~\ref{ex:common-distances})}
\\
\multirow{1}{*}{\rotatebox[origin=t]{270}{$\lhook\joinrel\rightarrow$}}
 & Subpartitions, $\mathbf{SubPart}(X)$ 
 & $\rightleftarrows$ 
 & (sub-)equivalence relations on $X$
 & {\tiny\itshape (Def.~\ref{dfn:partition})}, {\tiny \cite{smith2016hierarchical}}, {\tiny\itshape\cite{gratzer2011lattice}}
    \\[0.2cm]
\multicolumn{2}{l}{Cliquegrams, $\mathbf{Cliqgm}(X)$} 
    & $\rightleftarrows$ 
    & phylogenetic networks $\mathbf{Net}(X)$
    & {\tiny\itshape (Def.~\ref{dfn:cliquegram})}, {\tiny\itshape (Prop.~\ref{prop:equivalence})}, {\tiny\itshape (Def.~\ref{dfn:phylo-net})}
    \\
\multirow{1}{*}{\rotatebox[origin=t]{270}{$\lhook\joinrel\rightarrow$}} 
    & clique-sets $\mathbf{Cliq}(X)$ 
    & $\rightleftarrows$
    & all graphs $\mathbf{Grph}(X)$ 
    & {\tiny\itshape (Def.~\ref{defn:clique-set})}, {\tiny\itshape (Prop.~\ref{prop:cliq-clut-iso})}, {\tiny\itshape (Prop.~\ref{prop:cliq-clut-iso})}
    \\[0.2cm]
\multicolumn{2}{l}{Facegram, $\mathbf{Facegm}(X)$ }  
    & $\rightleftarrows$ 
    & filtrations $\mathbf{Filt}(X)$ 
    & {\tiny\itshape (Def.~\ref{dfn:facegram})}, {\tiny\itshape (Prop.~\ref{thm:equivalencetwo})}, {\tiny\itshape (Def.~\ref{dfn:filtration})}
    \\
&  face-sets, $\mathbf{Face}(X)$ 
    & $\rightleftarrows$ 
    & simplicial complexes $\mathbf{Simp}(X)$ 
    & {\tiny\itshape (Def.~\ref{dfn:face-set})}, {\tiny\itshape (Prop.~\ref{prop:clut-simp})}, {\tiny\itshape (Prop.~\ref{prop:clut-simp})}
    \\[0.5cm]
\multicolumn{5}{c}{\bfseries Connections for Filtrations}\\[-0.2em]
\hline
\vspace{-1em}\\
\multicolumn{5}{r}{%
    tree filtration (\textit{\small Def.~\ref{dfn:diameter}})
    \hfill$\lhook\joinrel\longrightarrow$\hfill
    vietoris-rips filtration (\textit{\small Def.~\ref{dfn:diameter}})
    \hfill$\lhook\joinrel\longrightarrow$\hfill
    filtration (\textit{\small Def.~\ref{dfn:filtration}})
}\\
\multicolumn{5}{r}{%
    (symmetric ultranetwork)
    \hfill
    (phylogenetic network)
    \hfill
    \phantom{filtration (\textit{\small Def.~\ref{dfn:filtration}})}
}
\end{tabular}
}
\caption{Correspondences listed in the papers and their definitions with references where to find them. In general, we treat the different concepts as lattices and show isomorphisms there. Due to the inclusions for filtrations it direclty follows that treegrams are cliquegrams and cliquegrams are facegrams but in general not vice versa. 
}
\label{tab:correspondence}
\end{table}

\subsection{Lattice structures} 
In this section, we review the basic notions and results of the theory of \emph{lattices} \cites{erne2003posets,roman2008lattices} as well as some results from \cite{kim2023interleaving} in order to help us properly formulate the PNRP and explain why it always admits a solution in the lattice setting. Finally, we consider the notion of \emph{diagram over a lattice} from \cite{kim2023interleaving}, which we refer to simply as the \emph{lattice-diagram}.

A \emph{lattice} $\mathcal{L}=(\mathcal{L},\leq,\vee,\wedge)$ is a poset 
$(\mathcal{L},\leq)$ (a binary relation $\leq$ on $\mathcal{L}$ which is reflexive, transitive, and antisymmetric) that admits all finite, nonempty joins $\vee$ and meets $\wedge$. For every $X,Y$ in $\mathcal{L}$, we say that \emph{$Y$ is a join of $X$ and $Z$} if $Y=X\vee Z$, then $X, Z$ are called \emph{joinands (join summands)} of $Y$. 
For every $X,Y$ in $\mathcal{L}$, we denote by $[X,Y]:=\{Z\in \mathcal{L}\mid X\leq Z\leq Y\}$ the interval between $X$ and $Y$ in the lattice $\mathcal{L}$. 
A lattice that admits all joins and meets is called a \emph{complete lattice} and is denoted by $(\mathcal{L},\leq,\vee,\wedge,0,1)$ (where $0:=\wedge \mathcal{L}$ is the bottom element and $1:=\vee \mathcal{L}$ is the top element). Any finite lattice is complete, since it admits a least upper bound, the meet of all the elements, and greatest lower bound, the join of all the elements, which are both finite and thus exists. 
A subset $\mathcal{B}\subset \mathcal{L}$ (possibly infinite) of a lattice $\mathcal{L}$ is said to be a \emph{join-dense subset of $\mathcal{L}$} if every $X$ in $\mathcal{L}$ is the join of a finite number of elements in $\mathcal{B}$. 
 \begin{example}
\label{ex:irred in the finite case are join-dense}
     An element $X$ of $\mathcal{L}$ is \emph{join-irreducible} if it is not a bottom element, and, whenever $X=Y\vee Z$, then $X=Y$ or $X=Z$. If the lattice $\mathcal{L}$ is finite, then the join-irreducible elements of $\mathcal{L}$ are join-dense.
 \end{example}
\begin{proposition}[{\cite{kim2023interleaving}*{Prop 3.2}}]
\label{prop:join-dec}
    Let $\mathcal{L}$ be a lattice that admits intital element $0$ and let $X$ in $\mathcal{L}$. Then, $X=\bigvee (\mathcal{B}\cap [0,X])$.
\end{proposition}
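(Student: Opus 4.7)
The plan is to unpack the hypothesis of join-density and then invoke antisymmetry. Since $\mathcal{B}$ is a join-dense subset of $\mathcal{L}$, by definition there exists a finite family $\{b_i\}_{i \in I} \subset \mathcal{B}$ with $X = \bigvee_{i \in I} b_i$. From this equation each $b_i$ satisfies $b_i \leq X$, so the entire family lies in $\mathcal{B} \cap [0,X]$; in particular, this intersection is nonempty.

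The remainder is a standard least-upper-bound argument. First, $X$ is an upper bound of $\mathcal{B} \cap [0,X]$ by the very definition of that set. Second, if $Y$ is any upper bound of $\mathcal{B} \cap [0,X]$, then $Y \geq b_i$ for each $i \in I$, since the $b_i$'s lie in $\mathcal{B} \cap [0,X]$; taking the join over $i \in I$ yields $Y \geq \bigvee_{i \in I} b_i = X$. By antisymmetry, $X$ is the least upper bound of $\mathcal{B} \cap [0,X]$, i.e., $X = \bigvee (\mathcal{B} \cap [0,X])$.

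The only subtlety worth flagging is the existence of the join $\bigvee(\mathcal{B} \cap [0,X])$, since $\mathcal{B} \cap [0,X]$ may be infinite while $\mathcal{L}$ is only assumed to be a lattice (so a priori only finite joins are guaranteed to exist). This is resolved for free by the argument above: the finite sub-join $\bigvee_{i \in I} b_i$ already equals $X$, and the least-upper-bound characterization just given both establishes existence of $\bigvee(\mathcal{B} \cap [0,X])$ and pins its value to $X$. I do not foresee a genuine obstacle here; the content of the proposition is essentially that join-density propagates downward to every principal ideal, and the proof is a one-line manipulation once the definitions are written out.
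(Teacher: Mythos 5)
Your proof is correct: the paper itself does not prove this proposition but imports it by citation from Kim--M\'emoli, and your argument (exhibit a finite join-dense decomposition $X=\bigvee_i b_i$ with each $b_i\in\mathcal{B}\cap[0,X]$, then verify that $X$ is the least upper bound of $\mathcal{B}\cap[0,X]$) is exactly the standard one-line argument that proof would consist of. Your remark that this also settles the existence of the possibly infinite join $\bigvee(\mathcal{B}\cap[0,X])$ in a lattice that is not assumed complete is a worthwhile point that the paper's statement glosses over.
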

\begin{definition}
\label{dfn:spanning}
We will call the (maximal) elements of the set $\mathcal{B}(X):=\mathcal{B}\cap [0,X]$ 
\emph{the (maximal) spanning $\mathcal{B}$-elements of $X$}.
\end{definition}
\begin{example}
Let $X$ be a finite set. Let $\mathcal{G}_X=(\mathcal{G}_X,\subset,\cup,\cap)$ be the lattice of all connected simple graphs $(V,E)$ with vertex set $V=X$. Then, the set $\caT _X$ of all tree graphs with vertex set $X$ is a join-dense subset of $\mathcal{G}_X$. In particular, for any given graph $(X,E)$ in $\mathcal{G}_X$ the spanning $\caT _X$-elements of $(X,E)$ are exactly the spanning trees of $(X,E)$. 
\end{example}

\begin{proposition}[Minimal join-reconstruction in a complete lattice setting]
\label{prop:inverse}\hfill~
Let $\mathcal{L}=(\mathcal{L}, \leq, \vee, \wedge,0,1)$ be a complete lattice and let $\mathcal{B}$ be a join-dense subset of $\mathcal{L}$. 
Any element $X$ in $\mathcal{L}$ is equal to the join $\vee \mathcal{B}(X)$  
 of its spanning $\mathcal{B}$-elements.
In addition, for any family $\caF$ of elements in $\mathcal{B}$, there exists a unique $\leq$-minimal element $X$ in $\mathcal{L}$ such that $\caF\subset \mathcal{B}(X)$, and moreover, that unique such element $X$ is simply given by the join $\vee\caF$.
\end{proposition}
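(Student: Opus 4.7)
The plan is to assemble the statement from two observations: the first assertion is an immediate specialization of Prop.~\ref{prop:join-dec}, and the second is the universal property of the join combined with antisymmetry, once one checks that $\vee \mathcal{F}$ itself is an admissible candidate.

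For the first assertion, note that since $\mathcal{L}$ is complete, the join $\bigvee \mathcal{B}(X)$ exists in $\mathcal{L}$. Applying Prop.~\ref{prop:join-dec} to $X$, using that $\mathcal{B}$ is join-dense, I get
\[
X \;=\; \bigvee \bigl(\mathcal{B}\cap [0,X]\bigr) \;=\; \bigvee \mathcal{B}(X),
\]
which is precisely the claim, and uses Defn.~\ref{dfn:spanning} only to rewrite the index set.

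For the second assertion, I set $X_0:=\bigvee \mathcal{F}$, which exists by completeness of $\mathcal{L}$. First I would verify that $\mathcal{F}\subset \mathcal{B}(X_0)$: by hypothesis each $f\in\mathcal{F}$ lies in $\mathcal{B}$, and by the definition of the join we have $f\leq \bigvee \mathcal{F} = X_0$, hence $f\in\mathcal{B}\cap[0,X_0]=\mathcal{B}(X_0)$. So $X_0$ is admissible. Next I would show minimality: if $Y\in\mathcal{L}$ is any element with $\mathcal{F}\subset \mathcal{B}(Y)$, then for every $f\in\mathcal{F}$ we have $f\leq Y$, and the universal property of $\bigvee$ yields $X_0=\bigvee \mathcal{F}\leq Y$. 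Antisymmetry of $\leq$ then forces any other $\leq$-minimal admissible element to coincide with $X_0$, giving uniqueness.

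There is no serious obstacle here; the proof is purely formal once one has Prop.~\ref{prop:join-dec} and the definition of the join in a complete lattice. The only point where completeness is genuinely used is to guarantee that $\bigvee \mathcal{F}$ exists for an arbitrary (possibly infinite) family $\mathcal{F}\subset\mathcal{B}$; without completeness the minimal join-reconstruction might fail to lie in $\mathcal{L}$.
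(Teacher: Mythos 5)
Your proof is correct and follows essentially the same route as the paper: the first claim is Prop.~\ref{prop:join-dec} verbatim, and the second rests on showing that $\vee\mathcal{F}$ is admissible and is a lower bound for every admissible element. The only (cosmetic) difference is that the paper first names the minimal element as the meet $\bigwedge\{Y\mid \mathcal{F}\subset\mathcal{B}(Y)\}$ and then identifies it with $\vee\mathcal{F}$, whereas you work with $\vee\mathcal{F}$ directly, which is slightly cleaner.
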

\begin{proof} The first part of the proof is equivalent to \textit{Prop.~\ref{prop:join-dec}}. 
Now, for the second part, consider 
$$X:=\bigwedge\{Y\in \mathcal{L}\mid \caF\subset \mathcal{B}(Y)\}$$
This is the unique $\leq$-minimal element $X$ such that $\caF\subset \mathcal{B}(X)$, which exists since the lattice $\mathcal{L}$ is complete. Now, we claim that $X=\vee \caF$. 
\begin{itemize}
\item Note that $X$ is a non-empty meet: for $Y:=\vee \caF$, $\caF\subset \mathcal{B}(Y)$ since $\mathcal{B}(Y)=\mathcal{B}\cap [0, Y]\supset \caF$.
\item By the subsequent argument $\vee \caF$ lies in the set that meet-spans $X$. Since the meet $X$ is always less or equal to all the elements in that set, then $X\leq \vee \caF$. 
\item We claim that $\vee \caF\leq X$: Let $Y\in \mathcal{L}$ such that $\caF\subset \mathcal{B}(Y)$ and let $Z\in \caF$. It suffices to check that $Z\leq Y$. Indeed, $Z\in \caF\Rightarrow Z\in \mathcal{B}(Y)$. Thus, $Z\leq \vee \mathcal{B}(Y)=Y$.
\item Finally, by combining all the previous arguments, we obtain that the element $X=\vee F$ is the unique $\leq$-minimal element in $\mathcal{L}$ such that $\caF\subset \mathcal{B}(X)$.
\end{itemize}
\end{proof}

\begin{remark}
\label{rem:PNRP}
If we model properly phylogenetic networks over a fixed finite set of taxa as a finite lattice (and thus complete lattice in particular) and treegrams over that set of taxa as a join-dense subset of that lattice, then by \textit{Prop.~\ref{prop:inverse}} the PNRP will admit a unique solution in this lattice. In the next section we craft two different lattice-diagram models for phylogenetic networks, for each of which the collection of all treegrams is a join-dense subset, and thus in each of these lattice-diagram models, the PNRP admits a solution in particular.
\end{remark}

\textbf{Diagrams over lattices}
Let $\mathcal{L}=(\mathcal{L},\leq, \vee,\wedge, 0, 1)$ be a finite lattice, which we fix here. 
An $\leq$-filtered diagram $X^\bullet:=\ldots \leq X^i\leq X^{i+1} \leq \ldots$
of elements in $\mathcal{L}$ is called a \emph{$\mathcal{L}$-diagram}. Equivalently, an $\mathcal{L}$-diagram is an order-preserving map of posets $X^\bullet:(\R,\leq)\to (\mathcal{L},\leq)$. 
The collection of all $\mathcal{L}$-diagrams forms a lattice on its own. The partial order is given by $X^\bullet\leq Y^\bullet\Leftrightarrow X^t\leq Y^t$, for all $t\in \R$.

\begin{definition}[Dendrograms as lattice-diagrams; partitions of $X$]
\label{dfn:partition}\label{dfn:dendrogram}
    Let $X$ be a finite set. 
    \begin{itemize}
        \item A \emph{partition of $X$} consists of a collection $P_X=\{B_1,\ldots,B_n\}$ of subsets $B_i$ of $X$, $i=1,\ldots,n$, called \emph{blocks}, such that $\cup_{i=1}^{n} B_i=X$ and $B_i\cap B_j=\emptyset$, for all $i\neq j$. Let $\mathbf{Part}(X)$ be the lattice of partitions $P_X$ on $X$ where the partial order is given by
    $$P_X\leq P'_X\Leftrightarrow \forall S\in P_X\text{, }\exists S'\in P'_X\text{, such that }S\subset S'.$$
\item A \emph{dendrogram over $X$} 
is a map $\mathcal{P}_X:\R\to\mathbf{Part}(X)$, such that:
\begin{enumerate}
    \item $\mathcal{P}_X$ is an order-preserving map, i.e.~$\mathcal{P}_X(t)\leq \mathcal{P}_X(s)$, for all $t\leq s$, 
    \item there exists a $t_1\in \R$, such that $\mathcal{P}_X(t)=\{X\}$, for $t\geq t_1$, $\mathcal{P}_X(0)=\{\{x\}\mid x\in X\}$, and $\mathcal{P}_X(t)=\emptyset$, for $t< 0$.
\end{enumerate}  
\item We denote by $\mathbf{Dendgm}(X)$ the lattice of all dendrograms over $X$, whose partial order, meets and joins are given by point-wise inequalities, meets, and joins, respectively.
\end{itemize}
\end{definition}
\begin{definition}[Treegrams as lattice-diagrams; subpartitions of $X$]
\label{dfn:subpartition}\label{dfn:treegram}
    Let $X$ be a finite set. 
    \begin{itemize}
        \item A \emph{subpartition of $X$} consists of a collection $P_X=\{B_1,\ldots,B_n\}$ of subsets $B_i$ of $X$, $i=1,\ldots,n$, called \emph{blocks}, such that $B_i\cap B_j=\emptyset$, for all $i\neq j$. Let $\mathbf{SubPart}(X)$ be the lattice of subpartitions $P_X$ on $X$ where the partial order is given by
    $$P_X\leq P'_X\Leftrightarrow \forall S\in P_X\text{, }\exists S'\in P'_X\text{, such that }S\subset S'.$$
\item A \emph{treegram over $X$} 
is a map $\mathcal{P}_X:\R\to\mathbf{SubPart}(X)$, such that:
\begin{enumerate}
    \item $\mathcal{P}_X$ is an order-preserving map, i.e.~$\mathcal{P}_X(t)\leq \mathcal{P}_X(s)$, for all $t\leq s$, 
    \item there exists a $t_0,t_1\in \R$, such that $\caC _X(t)=\{X\}$, for $t\geq t_1$, and $\caC _X(t)=\emptyset$, for $t\leq t_0$, 
    
\end{enumerate}  
\item We denote by $\mathbf{Treegm}(X)$ the lattice of all treegrams over $X$, whose partial order, meets and joins are given by point-wise inequalities, meets, and joins, respectively.
\end{itemize}
\end{definition}
Note that because we are working over a finite set $X$, our definition of dendrogram and treegram are equivalent to the ones in  \cite{carlsson2010characterization} and \cite{smith2016hierarchical}, respectively.

\begin{example}\label{ex:cliquegramNotTreegram}~\\[-1\baselineskip]
\begin{enumerate}
\item For the taxa $X=\{ a, b, c\}$ a dendrogram of $X$ is given by
\[
P_X^\bullet(x) = 
\begin{cases}
\emptyset & \text{for~} x<0 \\
\{ \{ a\}, \{ b \}, \{ c \}\} = P_0  & \text{for~} x \in [0,2)\\
\{ \{ a, b \}, \{ c \}\} = P_1  & \text{for~} x \in [2,3) \\
\{ \{ a, b,c \}\} = X = P_2  & \text{for~} x \geq 3 
\end{cases}\]

where $P_0 \leq P_1 \leq P_2$ is the partial order given in $\mathbf{Part}(X)$. 

\item Changing the previous map such that $P_X^\bullet(x) = \emptyset$ for $x<-1$ and $P_X^\bullet(x)=P_0$ for $x\in [-1, 2)$, is not a dendrogram but a valid treegram.
\item Changing the previous map such that $P_X^\bullet(x) = \{ \{ a,b\}, \{a,c\} \}$ for $x\in [2,3)$, cannot be modelled as a treegram, since $\{ \{ a,b\}, \{a,c\} \}$ is not a (sub-)partition.
\end{enumerate}
\end{example}

\subsection{The cliquegram of a phylogenetic network}
In this section, for a set of taxa $X$, we will start from a \emph{phylogenetic network} $N_X:X\times X\to\R$ (see \textit{Defn.~\ref{dfn:phylo-net}}) and then associate to it a 
lattice-diagram $\caC _X$ using the maximal cliques of the network called the \emph{cliquegram 
of $N_X$} (see \textit{Defn.~\ref{dfn:cliquegram}}). 
The assignment $N_X\mapsto \caC _X$ lifts to a one-to-one correspondence between phylogenetic networks and cliquegrams, which generalizes the one-to-one correspondence between symmetric ultranetworks and treegrams \cite{smith2016hierarchical} and the one-to-one correspondence between ultrametrics and dendrograms \cite{carlsson2010characterization}. 
Furthermore, we devise an algorithm for reconstructing a cliquegram from a given set of phylogenetic trees. An implementation of this algorithm can be found in \cite{ourgitcode}.

\textbf{Phylogenetic networks}
In the field of \emph{mathematical phylogenetics}, we are interested in studying evolutionary relationships among biological entities. 
Each biological entity is called a \emph{taxon}.
The set of all taxa is denoted by $X$ and let us fix it from now on.
Relationships among the taxa of $X$ are modeled by a weighted DAG on $X$
called a \emph{phylogenetic network}.
More concretely, a phylogenetic network $N_X$ over $X$ can be identified with a real-valued matrix $N_X$ over $X$, namely, a map $N_X:X\times X\to\R$: By viewing the poset $(\R,\leq)$ as time traced backwards, each value $N_X(x,x')$, $x,x'\in X$, of the matrix represents the time, traced backward, when the taxa $x,x'$ were mutated from their \emph{most recent common ancestor}, and it is called \emph{time to coalescence of $x$ and $x'$}. If $x=x'$, the interpretation of $N_X(x,x)$ is that it is the time, traced backwards, when the taxon $x$ was first observed in the phylogenetic network, called \emph{time of observation of $x$}. Because of this biological interpretation for $N_X$ and the consideration that time is traced backward, for any $x,x'$ in $X$: 
\begin{enumerate}
    \item The matrix $N_X$ must be symmetric in $x,x'$
    \item The time $N_X(x,x)$ of observation of $x$ and the time $N_X(x',x')$ of observation of $x'$ must be less or equal to the time $N_X(x,x')$ to coalescence of $x$ and $x'$.
\end{enumerate}
\begin{definition}
\label{dfn:phylo-net}
A \emph{phylogenetic network over $X$} 
is a map $N_X:X\times X\to\R$ such that
\begin{enumerate}
    \item $N_X(x,x')=N_X(x',x)$, for all $x,x'\in X$,
    \item $\max\{N_X(x,x),N_X(x',x')\}\leq N_X(x,x')$, for all $x,x'\in X$.
\end{enumerate}
We denote by $\mathbf{Net}(X)$ the lattice of all phylogenetic networks over $X$, whose partial order, meets and joins are given by entry-wise reverse inequalities, maximums, and minimums, respectively.
\end{definition}

\begin{example}
\label{ex:common-distances}\label{ex:symmetricultranetwork}
Let $X$ be a set of taxa. A matrix $D_X:X\times X\to \R$ such that for all $x,x',x''\in X$:
\begin{enumerate}
     \item $D_X(x,x')=D_X(x',x)$,
     \item $D_X(x,x'')\leq D_X(x,x')+D_X(x',x'')$,
     \item $D_X(x,x')\geq0$,
      \item $D_X(x,x)=0$,
     \item $D_X(x,x')=0\Rightarrow x=x'$,
\end{enumerate}
     is said to be a \emph{metric}. If the matrix is satisfying only the first four, then it is called a \emph{pseudometric}. Every pseudometric is also a phylogenetic network, but not vice versa.
     If $D_X$ satisfies the first condition and also \sloppy$D_X(x,x'')\leq \allowbreak \max\{D_X(x,x'),D_X(x',x'')\}$, then $D_X$ is said to be a \emph{symmetric ultranetwork} \cite{smith2016hierarchical}. 
     If it is also a metric, then it is called an \emph{ultrametric}.
We easily check that in all cases the matrices satisfy the two axioms of a phylogenetic network. From now on, we will also call any symmetric ultranetwork a \emph{phylogenetic tree}.
\end{example}

\noindent We are now investigating a lattice-diagram
representation for phylogenetic networks, visualized by a certain DAG, which generalizes the treegram representation of symmetric ultranetworks; a richer structure that allows for undirected loops in that DAG (note that there will not be any directed loops in that DAG structure, but rather, there would be loops in the underlying graph of that DAG).

Recall that a treegram consists of an increasing sequence of subpartitions of $X$. Now, to allow our representation to have loops we introduce a generalization of the notion of subpartition of a set $X$, i.e.~a family of `blocks' (subsets) of $X$, such that 
\begin{enumerate}
    \item At each level, each block is maximal in the sense that no block is contained in another block.
    \item Intersections between the blocks of $X$ are allowed so that loops can be formed in the network, as time is traced backward.
\end{enumerate}

If we think of a subpartition of $X$ as an equivalence relation $R$ on a subset $X_R$ of $X$, then condition 2. above, simply proposes dropping the "transitivity property" on $R$, and thus it prompts for considering only symmetric and reflexive relations $R$ on the subset $X_R$ of $X$ (not necessarily transitive). 
A symmetric and reflexive relation $R$ on $X_R\subset X$ is simply a graph $G_R$ on the vertex subset $X_R$ of $X$. An analog of a `block' in the setting of graphs, is a maximal connected subgraph $B$ of the graph $G_R$, called a \emph{maximal clique of a graph} (note that when $R$ is an equivalence relation on $X_R\subset X$, the maximal cliques of the associated graph $G_R$ of $R$ are precisely the equivalence classes of $R$ on $X_R$).

Below, we provide an abstract notion of a set $\caC$ of cliques of $X$, without a reference to a graph.

\begin{definition}\label{defn:clique-set}
 A \emph{clique-set of $X$} is a set $\caC_X$ of subsets of $X$, called \emph{cliques}, such that 
    \begin{enumerate}
    \item[(i)] no subset in $\caC_X$ is contained in another subset in $\caC_X$, and 
    \item[(ii)] 
    every set $X$ of vertices in which all pairs are contained in some clique in $\caC_X$ is also contained in a clique in $\caC _X$, i.e.~: for any $Y\subset X$, if for any $y,y'\in Y$, there exists $C_{y,y'}\in \caC _X$ such that $\{y,y'\}\subset C_{y,y'}$, then $Y\subset C$, for some $C\in \caC _X$.
   \end{enumerate}
    
     We denote by $\mathbf{Cliq}(X)$ the lattice of all clique-sets over $X$, whose partial order is given by $\caC _X\leq \caC '_X$ if and only if, for any clique $\C\in \caC _X$, there exists a clique $\C '\in\caC '_X$ such that $\C\subset \C '$.
\end{definition}

\begin{example}\label{ex:facegramNotCliquegram}
\label{ex:subpartition}
Below, we give some (non-)examples of clique-sets.
\begin{enumerate}
   \item If $G_X$ is a graph over $X$, then the set $\C _X$ of all maximal cliques of $G_X$ is a clique-set of $X$.
 \item A subpartition $P_X=\{B_1,\ldots, B_n\}$ of $X$ is a clique-set over $X$. 

\item 
For the taxa set $X=\{ a,b,c\}$, the set $C=\{ \{ a, b\}, \{ a, c\}, \{ b,c\}\}$ is not a clique-set, since all pairs in $Y = \{ a, b, c\}$ are contained in some clique in $C$ but $Y \not\subset C$.
\end{enumerate}
\end{example}

\begin{remark} Note that the maximal cliques of a graph give rise to a natural one-to-one correspondence between graphs and clique-sets. See \textit{Prop.~\ref{prop:cliq-clut-iso}} in Appendix. 
\end{remark}

\textbf{Cliquegrams} 
We show that a phylogenetic network can be represented by a diagram of clique-sets, called the \emph{cliquegram} (a portmanteau of `clique-set-diagram'), a representation which generalizes the treegram representation of symmetric ultranetworks \cite{smith2016hierarchical}.
  Let $N_X$ be a phylogenetic network over a set of taxa $X$. For any $t\in\R$, we can define a graph $G_t^X=(V_t^X,E_t^X)$, given by 
\begin{itemize}
    \item $V_t^X:=\{x\in X\mid N_X(x,x)\leq t\}$,
    \item $E_t^X:=\left\{\{x,x'\}\subset X\mid N_X(x,x')\leq t\text{ and }x\neq x'\right\}.$
\end{itemize}
\begin{remark}
Note that the graph $G_t^X=(V_t^X,E_t^X)$ is well-defined since $N_X$ satisfies the second assumption in \textit{Defn.~\ref{dfn:phylo-net}}. $G_t^X$ is well-defined only for phylogenetic networks. Indeed, if $N_X$ is not a phylogenetic network, but rather, a symmetric network that does not satisfy the second assumption in \textit{Defn.~\ref{dfn:phylo-net}}, then $G_t^X$ will contain an edge that does not contain both of its incident vertices.
\end{remark}

Let $\caC _X(t)$ denote the set of all maximal cliques of $G_t^X$.
We call the map $\caC _X:\R\to\mathbf{Cliq}(X)$, $t\mapsto \caC _X(t)$, the \emph{cliquegram representation of $N_X$}.

It is easy to check that (i) $\caC _X$ forms an order-preserving map $\caC _X:(\R,\leq)\to\mathbf{Cliq}(X)$, and (ii) there exists a $t_0\in \R$ such that $\caC _X(t)=\{\{X\}\}$, for $t\geq t_0$, and $\caC _X(t)=\emptyset$, for $t\leq-t_0$.
\begin{remark}
Note that, if $N_X$ is a metric space, then for any time $t\in\R$, the clique-set $\caC _X(t)$ is equal to the set of all maximal simplices of the Vietoris-Rips complex of $N_X$ at time $t$. 
Also, note that if $N_X$ is a symmetric ultranetwork or an ultrametric,
then the associated cliquegram $\caC _X$ would be a treegram or a dendrogram, respectively. 

Hence, the cliquegram representation of phylogenetic networks generalizes the treegram representation of a symmetric ultranetwork \cite{smith2016hierarchical} and the dendrogram representation of an ultrametric \cite{carlsson2010characterization}, respectively.  
\end{remark}

\begin{algorithm}[ht]
\caption{The cliquegram of a phylogenetic network}\label{alg:cliquegram}
\label{alg:cliquegramfromnetwork}
\begin{algorithmic}[1]
\State \textbf{Input: }A phylogenetic network $N_X$. Order all the different values of the coefficients $N_X(x,x')$ of the distance matrix in an increasing way. Let $M_{N_X}$ be that ordered set of real numbers. 
\State $CL \leftarrow \emptyset$
\ForAll{$t \in M_{N_X}$ taken in an increasing order}
    \State $G_t$ $\leftarrow$ Build the graph $G_t$ with vertex set $V(G_t):=X$ \\
    \phantom{ $G_t$ $\leftarrow$ }and edge-set $E(G_t):=\left\{(x,x')\in X\times X\mid N_X(x,x')\leq t \right\}.$
    \State $CL \leftarrow CL \cup \{t; $ \texttt{FindMaxCliques}($G_t$) $\}$
\EndFor
\State{Return $CL$ (A nested sequence of clique-sets of $X$, i.e.~the cliquegram associated to $D$.)}
\end{algorithmic}
\end{algorithm}

Finding the maximal cliques \texttt{FindMaxCliques} in a graph is implemented using the Bron-Kerbosch algorithm \cite{BronKerbosch73} and subsequent improvements, see e.g. \cite{tomita_maxcliques_2006}. It's a recursive algorithm but there exist iterative implementations avoiding issues with recursion depth. 
Assume we have two successive element $t_i$, $t_{i+1}$ in $M_{N_X}$. 
Given the new edges as $E(G_{t_{i+1}}) \backslash E(G_{t_i})$, all the maximal cliques in $G_{t_{i+1}}$ which are not already in $G_{t_{i}}$ need to contain the two vertices in at least one of the new edges. 
Hence, if there are only few new edges, we can find the new maximal cliques faster by using the version (for each new edge separately) and  restricted to only include the maximal cliques containing the chosen new edge. 
For the runtime, see \textit{Sec.~\ref{sec:mergegram_cliquegram}} for a further discussion as well as the code repository in \cite{ourgitcode} for a comparison.

Next, we give an abstract notion of a cliquegram without a reference to a phylogenetic network.

\begin{definition}
\label{dfn:cliquegram}
Let $X$ be a finite set of taxa. 
\begin{itemize}
\item A \emph{cliquegram over $X$} 
is a map $\caC _X:\R\to\mathbf{Cliq}(X)$, such that:
\begin{enumerate}
    \item $\caC_X$ is an order-preserving map, i.e.~$\caC _X(t)\leq \caC _X(s)$, for all $t\leq s$, 
    \item there exists a $t_0,t_1\in \R$, such that $\caC _X(t)=\{X\}$, for $t\geq t_1$, and $\caC _X(t)=\emptyset$, for $t\leq t_0$, 
    
\end{enumerate}  
\item We denote by $\mathbf{Cliqgm}(X)$ the lattice of all cliquegrams over $X$, whose partial order, meets and joins are given by point-wise inequalities, meets, and joins, respectively.
\end{itemize}
\end{definition} 

\begin{remark}\label{rmk:criticalset}
Since we assume that $X$ is finite, (1) and (2) imply that there exists an ordered set of non-negative real numbers $(a_1<\ldots<a_s)$ and of minimal cardinality, called a \emph{critical set}, such that $\caC _X(t)=\caC _X(s)$, for all $a_i\leq t<s<a_{i+1}$, $i=1,\ldots,s$. 
In particular, when the cliquegram is a treegram, \textit{Defn.~\ref{dfn:cliquegram}} agrees with the definition of treegram in \cite{kim2018formigrams}.
\end{remark}
\begin{remark}By definition, the cliquegram can be visualized by a rooted DAG, as in Fig.~\ref{fig:cliquegram3}.
\end{remark}

The cliquegram representation of phylogenetic networks yields an equivalence between cliquegrams and phylogenetic networks. See Fig.~\ref{fig:cliquegram3}. This equivalence guarantees that all the information encoded in the phylogenetic networks is also encoded in the cliquegram and vice versa.

\begin{proposition}
\label{prop:equivalence}
Let $X$ be a set of taxa. There is an isomorphism
 $$\mathbf{Net}(X)\stackrel[\Psi]{\Phi}{\rightleftarrows}\mathbf{Cliqgm}(X)$$
 between the lattice $\mathbf{Net}(X)$ of all phylogenetic networks over $X$ and the lattice $\mathbf{Cliqgm}(X)$ of all cliquegrams over $X$. 
 The isomorphism maps $\Phi,\Psi$ are defined as follows
 \begin{itemize}
     \item For any phylogenetic network $N_X$, the cliquegram $\Phi(N_X)$ over $X$ is given for any $t\in\R$ by 
$$\Phi(N_X)(t):=\bigvee\left\{\{x,x'\}\subset X\mid N_X(x,x')\leq t\right\}.$$
     \item For any cliquegram $\caC _X$, the phylogenetic network $\Psi(\caC _X)$ is given for any $x,x'\in X$ by
$$\Psi(\caC _X)(x,x'):=\min\{t\in\R\mid x, x'\text{
 belong to some clique in }\caC _X(t)\}.$$
 \end{itemize}
\end{proposition}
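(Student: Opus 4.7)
\textbf{The plan} is to exhibit $(\Phi,\Psi)$ as an order-isomorphism of posets; since a bijection whose inverse is also order-preserving automatically preserves all joins and meets defined in terms of the order, this will upgrade to a lattice isomorphism. I would carry this out in three stages: (i) verify that $\Phi$ and $\Psi$ actually land in the claimed targets; (ii) prove $\Psi\circ\Phi=\mathrm{Id}$ and $\Phi\circ\Psi=\mathrm{Id}$; (iii) check monotonicity of both maps.

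For well-definedness of $\Phi(N_X)$, observe that $\Phi(N_X)(t)$ coincides with the set of maximal cliques of the graph $G_t^X$ defined just before Defn.~\ref{dfn:cliquegram}. Monotonicity in $t$ is immediate because enlarging $t$ only adds vertices and edges. Taking $t_0:=\max_{x,x'}N_X(x,x')$ makes $G_{t_0}^X$ the complete graph on $X$, so $\Phi(N_X)(t)=\{X\}$ for $t\geq t_0$; any $t_1<\min_x N_X(x,x)$ leaves $G_{t_1}^X$ empty, and the critical set is finite because $X$ is finite. For $\Psi(C_X)$: the set whose infimum defines $\Psi(C_X)(x,x')$ is non-empty (any $t\geq t_0$ works) and the infimum is attained thanks to the critical-set condition. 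Symmetry in $x,x'$ is clear, and the ultranetwork-type axiom $\max\{\Psi(C_X)(x,x),\Psi(C_X)(x',x')\}\leq\Psi(C_X)(x,x')$ holds because $x,x'$ both lying in some clique at time $t$ forces each individually to lie in a clique at time $t$.

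\textbf{The hard step} will be $\Phi\circ\Psi=\mathrm{Id}$, where the challenge is to recover the full clique-set $C_X(t)$ from only the pairs that appear together in some common clique of $C_X(t)$, i.e.\ from its underlying $1$-skeleton. This is precisely what condition (ii) of Defn.~\ref{defn:clique-set} is engineered to encode: any subset $Y$ of $X$ all of whose pairs sit inside some clique of $C_X(t)$ is itself contained in a clique of $C_X(t)$. I would use this to show that every maximal clique $Y$ of the graph underlying $\Psi(C_X)$ at time $t$ must coincide with an element of $C_X(t)$: condition (ii) gives $Y\subset C$ for some $C\in C_X(t)$, maximality of $Y$ in the graph forces $Y=C$, and condition (i) rules out any further inclusion in the other direction. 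The reverse composite $\Psi\circ\Phi=\mathrm{Id}$ is routine, reducing to the identity $\min\{t:N_X(x,x),N_X(x',x'),N_X(x,x')\leq t\}=N_X(x,x')$, which follows from axiom (2) of Defn.~\ref{dfn:phylo-net}.

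Finally, order-preservation is straightforward once one recalls that the lattice order on $\mathbf{Net}(X)$ is reverse pointwise order. If $N_X\leq N'_X$ (so $N_X\geq N'_X$ pointwise), then the graph $G_t^X$ associated to $N_X$ is a subgraph of the one associated to $N'_X$, so each maximal clique on the $N_X$-side sits inside a maximal clique on the $N'_X$-side, giving $\Phi(N_X)(t)\leq\Phi(N'_X)(t)$ in $\mathbf{Cliq}(X)$. Dually, if $C_X\leq C'_X$ then any pair witnessed together at time $t$ by $C_X$ is witnessed at time $t$ by $C'_X$, yielding $\Psi(C'_X)\leq\Psi(C_X)$ pointwise, i.e.\ $\Psi(C_X)\leq\Psi(C'_X)$ in $\mathbf{Net}(X)$. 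Combined with the bijection proved above, this yields the claimed lattice isomorphism.
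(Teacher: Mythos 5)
Your proposal is correct and follows essentially the same route as the paper's proof: establish that $\Phi$ and $\Psi$ are mutually inverse and that both preserve the lattice orders (recalling that the order on $\mathbf{Net}(X)$ is reverse pointwise). You in fact supply more detail than the paper does on the composite $\Phi\circ\Psi=\mathrm{Id}$ — correctly identifying condition (ii) of Defn.~\ref{defn:clique-set} as the ingredient that lets one recover $C_X(t)$ from its underlying graph — a step the paper's appendix asserts in a single line.
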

\begin{example} For the taxa set $X=\{ a,b,c \}$, a cliquegram is given by the assignment 
\[\caC(x)=
\begin{cases}
    \emptyset & \text{for~} x < 0 \\
    \{ \{ a\}, \{ b\}, \{ c\}\} & \text{for~} x \in [0, 2) \\
    \{ \{ a, b\}, \{ a, c\}\} & \text{for~} x \in [2, 3) \\
    \{ \{ a, b, c\}\} & \text{for~} x \geq 3
\end{cases}.\]
By \textit{Prop.~\ref{prop:equivalence}} this cliquegram is isomorphic to the phylogenetic network given by phylogenetic network
\begin{center}
\begin{tabular}{c||c|c|c}
$N_X$& a & b & c \\
\hline
\hline
a & 0 & 2 & 2 \\
b & 2 & 0 & 3 \\
c & 3 & 2 & 0
\end{tabular}
\end{center}
This is the same situation as in \textit{Ex.~\ref{ex:cliquegramNotTreegram}(2)}, which could not be represented as a treegram.
\end{example}

\begin{figure}[ht]
    \centering
    \includegraphics[width=0.7\textwidth]{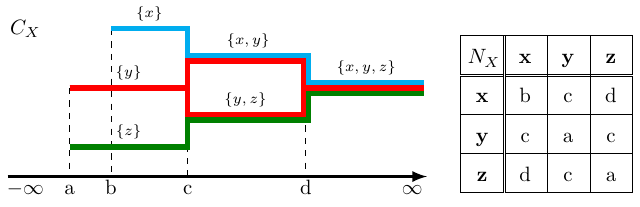}
    \caption{On the left there is a cliquegram $\caC _X$, on the right is the associated phylogenetic network $N_X$.}
    \label{fig:cliquegram3}
\end{figure}

By definition of a cliquegram and Ex.~\ref{ex:subpartition}, it is easy to check that any treegram is also a cliquegram. However, the converse is not true. 
Thus, \textit{Prop.~\ref{prop:equivalence}} can be viewed as a generalization of the equivalence of dendrograms with ultrametrics \cite{carlsson2010characterization}, and of treegrams with symmetric ultranetworks (symmetric ultranetworks) \cite{smith2016hierarchical}.

\begin{remark}
The biological interpretation of \textit{Prop.~\ref{prop:equivalence}} is that there is a phylogenetic network reconstruction process which can be modelled as a lattice-join operation $\vee$ on the collection of cliquegrams over $X$, i.e.~any family $\caT$ of treegrams will span a unique minimal cliquegram, given by the join-span $\vee \caT$. 
\end{remark}

\begin{remark}
It is interesting to note that because of \textit{Prop.~\ref{prop:equivalence}} and the algebraic viewpoint of the minimum operation on $\R$ as a tropical addition on $\R$ (and therefore the entry-wise minimum of matrices), we can realize the join-cliquegram as a tropical addition on treegrams. We recommend \cite{maclagan2021introduction} for a thorough introduction to tropical geometry.
\end{remark}

\begin{remark}[Computing the join-cliquegram of a set of treegrams]\label{remark:join_cliquegram}
Given a family $\caT$ of treegrams, we can compute the join-cliquegram $\vee\caT$ of  $\caT$ by (i) first computing the minimizer network matrix of the associated symmetric ultranetworks of the trees in $\caT$ (this is the entry-wise minimum of these matrices) 
 and then (ii) taking the associated cliquegram of that network, following \textit{Alg.~\ref{alg:cliquegramfromnetwork}}.
\end{remark}

\subsection{The facegram of a filtration}
In this section, we define a higher order generalization of a phylogenetic network, called a \emph{filtration}, which is originated in TDA \cite{memoli2017distance}. We associate a graphical/DAG
representation for a filtration, called a \emph{facegram}, which generalizes the 
cliquegram representation of a phylogenetic network (see Rem.~\ref{rem:facegram generlizes cliquegram}).

\textbf{Filtrations}
Let $X$ be a set of taxa. 
Phylogenetic networks $N_X$ over $X$ are normally considered modeling evolutionary relationships among taxa, by assigning a certain time instance $N_X(x,x')$ to each pair $\{x,x'\}$ of taxa in $X$, called \emph{time to coalescence}, if $x\neq x'$, and \emph{time to observation}, if $x=x'$. 
From a combinatorial (topological) viewpoint, perhaps a more natural extension of these coalescence times to a higher order is the following: Let $S=\{x_1,\ldots,x_n\}$ be a subset of $X$. Consider a function $\F _X:\mathbf{pow}(X)\to \R$ representing \emph{for any $S\subset X$ the time $\F _X(S)$ to coalescence of all the taxa in $S$}, where $\mathbf{pow}(X)$ is the set of all non-empty subsets of $X$. 

 By viewing $(\R,\leq)$ as time traced backward, each value $\F _X(S)$, $S\in \mathbf{pow}(X)$, represents the time, traced backward, when the taxa $S\subset X$ were mutated from their \emph{most recent common ancestor}. Because of this biological interpretation for $\F _X:\mathbf{pow}(X)\to\R$ and the consideration that time is traced backwards, for any $S\subset T\subset X$: the time to coalescence of $S$ must be less or equal than the time to coalescence of $T$, i.e.~$\F _X(S)\leq \F _X(T)$. 

\begin{definition}[{\cite{memoli2017distance}*{Sec.~3}}]
\label{dfn:filtration}
Let $X$ be a finite set of taxa.
\begin{enumerate}
    \item A \emph{filtration over $X$} is an order-preserving map $\F _X:(\mathbf{pow}(X),\subset)\to(\R,\leq)$ of posets.
    \item We denote by $\mathbf{Filt}(X)$ the lattice of filtrations, whose partial order, joins and meets are given by point-wise reverse inequalities, minimums and maximums, respectively.
\end{enumerate}
\end{definition}

Below is an example of a filtration, which is given by extending the pairwise-defined \emph{time to coalescence model} from the setting of phylogenetic networks to the setting of filtrations. 
\begin{definition}
\label{dfn:diameter}
Let $N_X:X\times X\to\R$ be a phylogenetic network over a set of taxa $X$. We define the \emph{Vietoris-Rips} or \emph{diameter filtration $\mathbf{VR}_{X}:\mathbf{pow}(X)\to\R$ of $N_X$} element-wisely for each $\sigma\in\mathbf{pow}(X)$, as follows:
$$\mathbf{VR}_{X}(\sigma):=\max_{x,x'\in \sigma}N_X(x,x').$$
By definition, the map $N_X\mapsto\mathbf{VR}_{X}$ is injective.

We call the diameter filtration $\mathbf{VR}_{X}$ of a symmetric ultranetwork $U_X$, a \emph{tree filtration}.
\end{definition}

\begin{remark}
The lattice $\mathbf{Net}(X)$ embeds into the lattice $\mathbf{Filt}(X)$ via the Vietoris-Rips filtration construction. Thinking of phylogenetic networks as weighted or filtered graphs, and thus one-dimensional structures, and filtrations as filtered simplicial complexes, then a filtration can be seen as a higher order generalization of phylogenetic networks. 
\end{remark} 

\textbf{Facegrams}
Now, we want to devise a representation for phylogenetic filtrations which (i) generalizes the 
cliquegram representation of phylogenetic networks and (ii) that can be used for modeling the phylogenetic filtration reconstruction from a given family of phylogenetic trees, if they are viewed as filtrations. 
From the mathematical-biology point of view, 
the join-operation on filtrations 
is expected to be a much richer structure, capturing more reticulations/recombination cycles, than the join-operation on phylogenetic networks. 
This would be more clear in Ex.~\ref{fig:facegram}. To achieve that, we consider a generalization of a clique-set of a set $X$, simply by dropping the second restriction in the definition of a clique-set.
First, let's recall the notion of a simplicial complex.
\begin{definition}
Let $X$ be a finite set of taxa. A \emph{simplicial complex over $X$} is a collection $\S_X\subset \mathbf{pow}(X)$ of subsets of $X$, called \emph{simplices} or \emph{faces}, which is closed under taking subsets. 
\end{definition}

By definition, a simplicial complex over $X$, $\S_X$, is a downset in the lattice $(\mathbf{pow}(X),\subset,\cup,\cap)$ of non-empty subsets of $X$. Every finite downset of a given poset is completely determined by its maximal elements. In the case of a simplicial complex $\S_X$, those are the maximal faces of $\S_X$.
\begin{definition}
\label{dfn:face-set}
Let $X$ be a finite set of taxa.
\begin{enumerate}
\item A \emph{face-set of $X$}
is a family $\caS_X$ of subsets of $X$ that are pairwise incomparable with respect to the inclusion of sets. The elements of a face-set are called \emph{faces}.
 \item 
     We denote by $\mathbf{Face}(X)$ the lattice of face-sets of $X$, whose inequalities are given by: $\caS_X\leq \caS'_X$ if and only if, for any face $C\in \caS_X$, there exists a face $\C '\in\caS'_X$ such that $C\subset \C '$.
\end{enumerate}
\end{definition}
 \begin{example}
 If $\S_X$ is a simplicial complex over $X$, then the set $\C _X$ of all maximal faces $\sigma$ of $\S_X$ forms a face-set of $X$.
 \end{example}

\begin{remark}
\label{rem:joinstwo}
Note that the maximal faces of a simplicial complex give rise to a natural one-to-one correspondence between simplicial complexes and face-sets. See \textit{Prop.~\ref{prop:clut-simp}} in Appendix. In addition, because of that equivalence, the join $\C _X\vee \C '_X$ of a pair of face-sets of $X$ is equal to the set of maximal faces of the union simplicial complex $\S_X\cup S'_X$ of the simplicial complexes $\S_X$ and $S'_X$ associated to the face-sets $\C _X$ and $\C '_X$, respectively.
\end{remark}

Let $\F _X$ be a filtration over a set of taxa $X$. For any $t\in\R$, we define the simplicial complex $\caS_X(t)$, given by
$$\caS_X(t):=\{\sigma\in\mathbf{pow}(X)\mid \F _X(\sigma)\leq t\}.$$
Let $\caC _X(t)$ denote the face-set of all maximal faces of $\caS_X(t)$.
We call the map $\caC _X:\R\to\mathbf{Face}(X)$, $t\mapsto \caC _X(t)$ the \emph{facegram representation of $\F _X$}. 
It is easy to check that (i) $\caC _X$ forms an order-preserving map $\caC _X:\R\to\mathbf{Face}(X)$, and (ii) there exists a $t_0\in \R$ such that $\caC _X(t)=\{X\}$, for any $t\geq t_0$.

\begin{remark}
\label{rem:facegram generlizes cliquegram}
Let $N_X$ be a phylogenetic network and let $\mathbf{VR}_{X}:\mathbf{pow}(X)\to\R$ be its associated Vietoris-Rips filtration. Then, the facegram representation $\caF _X$ of the filtration $\mathbf{VR}_{X}$ coincides with the cliquegram representation of $N_X$. Therefore, the facegram representation of a filtration is a generalization of the cliquegram representation of a phylogenetic network. 
\end{remark}

Below, we define the abstract notion of a facegram over $X$ without a reference to a filtration. 

\begin{definition}
\label{dfn:facegram}
Let $X$ be a finite set of taxa. 
A \emph{facegram over $X$} 
is a map $\caF _X:\R\to\mathbf{Face}(X)$, such that:
\begin{enumerate}
    \item $\caF _X$ is an order-preserving map, i.e.~$\caF _X(t)\leq \caF _X(s)$, for all $t\leq s$, 
    \item there exists a $t_0, t_1\in \R$, such that $\caF _X(t)=\{X\}$, for any $t\geq t_1$, and 
    $\caF _X(t)=\emptyset$, for $t\leq t_0$,
\end{enumerate}

We denote by $\mathbf{Facegm}(X)$ the lattice of all facegrams over $X$, whose partial order, meets and joins are given by point-wise inequalities, meets, and joins, respectively.
\end{definition} 

\begin{remark}
Similar to Rem.~\ref{rmk:criticalset}, there exists an ordered set of non-negative real numbers $(a_1<\ldots<a_n)$ with minimal cardinality, called a \emph{critical set}, such that $\caC _X(t)=\caC _X(s)$, for all $a_i\leq t<s<a_{i+1}$, $i=1,\ldots,n$.
\end{remark}

\begin{remark}By definition, the facegram can be visualized as in Fig.~\ref{fig:facegram-filt2}.
\end{remark}

\begin{figure}[ht]
    \centering
    \includegraphics[width=0.5\textwidth]{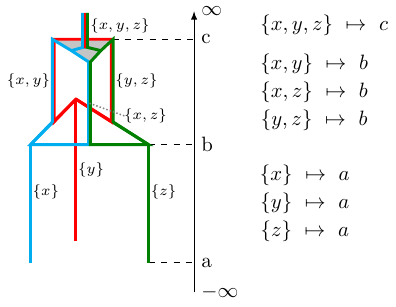}
    \caption[width=\textwidth]{An example of a facegram and its associated filtration.}
    \label{fig:facegram-filt2}
\end{figure}

We show that facegrams and filtrations are naturally equivalent (for the proof, see Prop.~\ref{thm:equivalencetwo}).
\begin{proposition}
\label{thm:equivalencetwo}
Let $X$ be a set of taxa. There is an isomorphism
$$\mathbf{Filt}(X)\stackrel[\Psi]{\Phi}{\rightleftarrows}\mathbf{Facegm}(X)$$
between the lattice $\mathbf{Filt}(X)$ of all filtrations over $X$ and the lattice $\mathbf{Facegm}(X)$ of all facegrams over $X$, where the maps are defined as follows
\begin{itemize}
    \item For any filtration $\F _X$, the facegram $\Phi(\F _X)$ over $X$ is given for any $t\in\R$ by 
$$\Phi(\F _X)(t):=\bigvee\left\{S\subset X\mid \F _X(S)\leq t\right\}.$$
    \item For any facegram $\caF _X$, the filtration $\Psi(\caF _X)$ is given for any $S\subset X$ by
$$\Psi(\caF _X)(S):=\min\{t\in\R\mid S\text{
is contained in some face in }\caF _X(t)\}.$$
\end{itemize}
\end{proposition}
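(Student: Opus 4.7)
The plan is to establish the equivalence in three stages: first, verify that $\Phi$ and $\Psi$ land in the correct lattices; second, check that they are mutually inverse; third, confirm order-preservation so that the bijection is in fact a lattice isomorphism.

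For well-definedness of $\Phi$, I would fix a filtration $F_X$ and, for each $t$, form the simplicial complex $S_X(t) := \{\sigma \in \mathbf{pow}(X) \mid F_X(\sigma) \leq t\}$, which is a down-set in $(\mathbf{pow}(X),\subset)$ since $F_X$ is order-preserving. By Rem.~\ref{rem:joinstwo}, the face-set of maximal elements of $S_X(t)$ coincides with $\Phi(F_X)(t)$. The implication $t \leq s \Rightarrow S_X(t) \subset S_X(s)$ forces $\Phi(F_X)(t) \leq \Phi(F_X)(s)$; the stabilization at $\{X\}$ for $t \geq F_X(X)$ is immediate; and the critical-set condition holds because the finite set $\mathbf{pow}(X)$ admits only finitely many values of $F_X$. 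For $\Psi$, given a facegram $C_X$ and $S \subset X$, the set $\{t \mid S \subset C\text{ for some }C \in C_X(t)\}$ is nonempty by condition~2 of facegrams and achieves its infimum by the critical-set condition~3; monotonicity $S \subset T \Rightarrow \Psi(C_X)(S) \leq \Psi(C_X)(T)$ holds since any face containing $T$ also contains $S$.

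For the mutually inverse property, I would compute $\Psi(\Phi(F_X))(S)$ directly: a subset $S$ lies in some face of $\Phi(F_X)(t)$ iff it lies in some maximal face of $S_X(t)$, iff $S \in S_X(t)$, iff $F_X(S) \leq t$; taking the minimum over such $t$ yields $F_X(S)$. In the other direction, $\Phi(\Psi(C_X))(t)$ consists of the maximal faces of $\{\sigma \mid \Psi(C_X)(\sigma) \leq t\}$; but $\Psi(C_X)(\sigma) \leq t$ iff $\sigma$ is contained in some face of $C_X(t)$, so this down-set is exactly the one generated by $C_X(t)$, whose elements are pairwise incomparable by Defn.~\ref{dfn:face-set} and therefore are themselves the maximal elements of the down-set.

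Finally, order-preservation is straightforward: the convention $F_X \leq G_X$ in $\mathbf{Filt}(X)$ means $F_X(\sigma) \geq G_X(\sigma)$ pointwise, giving $S_X^F(t) \subset S_X^G(t)$ for all $t$, and hence $\Phi(F_X)(t) \leq \Phi(G_X)(t)$; symmetrically, $C_X \leq C'_X$ gives $\Psi(C_X)(S) \geq \Psi(C'_X)(S)$. Since both maps preserve order and are mutual inverses, they automatically carry meets to meets and joins to joins. The main delicate point is the inverse identity in the facegram direction: one must verify that the maximal elements of the down-set generated by a face-set $C_X(t)$ recover exactly $C_X(t)$, which rests on the pairwise-incomparability clause of Defn.~\ref{dfn:face-set}. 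This is the step where the proof diverges from, yet parallels, the clique-set argument of Prop.~\ref{prop:equivalence}, since the face-set definition has dropped the second closure condition that was present for clique-sets.
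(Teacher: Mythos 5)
Your proposal is correct and follows essentially the same route as the paper's proof: compute both composites $\Psi\circ\Phi$ and $\Phi\circ\Psi$ directly and then verify that both maps respect the (reverse pointwise) lattice orders. The extra care you take with well-definedness and with the fact that the maximal elements of the down-set generated by a face-set recover that face-set (via pairwise incomparability) is detail the paper leaves implicit, but it does not change the argument.
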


\begin{example}
For a taxa set $X = \{ a, b, c\}$, a facegram over X is given by the map  
\[ \caF _X (x) = 
\begin{cases}
\emptyset & \text{for~} x < 0 \\
\{ \{a\}, \{b\}, \{c\} \} & \text{for~} x \in [0,2) \\
\{\{a,b\}, \{a,c\}, \{b, c\}\} & \text{for~} x \in [2, 3) \\
\{a, b, c\} & \text{for~} x \geq 3, \\
\end{cases}\]
which is isomorphic to the filtration assigning $0$ to the singletons, $2$ to the tuples and $3$ to the triplet in $X$ (see \textit{Prop.~\ref{thm:equivalencetwo}}). 
Furthermore, comparing it to \textit{Ex.~\ref{ex:facegramNotCliquegram}}, we can see we cannot model it as a cliquegram and thus the facegram is a more general construction.
\end{example}

Note also that treegrams over $X$ are special cases of facegrams over $X$. 
In fact, we show that treegrams -when viewed as facegrams- are precisely the join-irreducible facegrams. 
We can now connect the facegram with the PNRP, in particular we specify what the join-irreducible facegrams are.

\begin{theorem}
\label{thm:filtration-irreducible}
A facegram over $X$ is join-irreducible if and only if it is a treegram over $X$. In particular, treegrams are join-dense in the lattice of facegrams. By Rem.~\ref{rem:PNRP}, if we use the facegram as a model of phylogenetic reconstruction, then in this setting the PNRP will admit a solution.
\end{theorem}

\begin{proof}
($\Leftarrow$) Because in every treegram over $X$ all the taxa in the end are joined to form a single block, each $x\in X$ appears as a leaf in the treegram, so we can restrict to dendrograms. Our claim is equivalent to check that dendrograms over $X$ are join-irreducible in the sublattice of face sets with vertex set exactly equal to $X$: Let $\C _X$ be a dendrogram, i.e.~$\C _X:(\R,\leq)\to\mathbf{Part}(X)$. 
To show that claim above, it suffices to check it pointwisely. 
Indeed, at each time $t\in\R$, the face-set $\C _X(t)$ is a partition of $X$ and thus a face-set of vertex set equal to $X$. If the face-set is not join-irreducible in the lattice of face-sets with vertex set $X$, there would be two other face-sets $\C '_X,\C ''_X$ such that $\C _X(t)=\C '_X(t)\vee \C ''_X(t)$ with $\C '_X(t),\C ''_X(t)\neq \C _X(t)$, and each with vertex set equal to $X$. 
That means that if $\S_X,\S'_X,\S''_X$ are the simplicial complexes generated by those face-sets, respectively, then $\S_X=S'_X\cup S''_X$ while both $\S'_X$, $\S''_X$ would be strict subcomplexes of $\S_X$. 
So, the maximal faces of $S'_X$ and the ones for $\S''_X$ are strictly contained in the maximal faces of their union $\S_X$, and so is their union. That implies that $\S'_X\cup \S''_X$ is a strict subcomplex of $\S_X$, which is clearly a contradiction. 
Hence, the partition $\C _X(t)$, viewed as a face-set with vertex set $X$ is indeed join-irreducible in the lattice of face-sets with vertex set equal to $X$.
 
($\Rightarrow$) If $\C _X$ is join-irreducible and $\C _X$ is not a treegram, then there is a minimum value $t_1\in \R$ such that the clique-set $\C _X(t_1)$ is not a subpartition. However, $\C _X(t_1)$ would be a union of finitely many subpartitions. Pick any of them, denoted by $\S_X(t_1)$, and then observe that as $t>t_1$ the blocks of that subpartition are merged more and more together, creating at times $t>t_1$ other (perhaps more than one) choices of subpartitions $\S_X(t)$ that are coarser than $\S_X(t_1)$ until they become a single block $\{X\}$. For each time instance, we make a choice of such a subpartition $\S_X(t)$. By definition of $t_1$, for $t<t_1$ $\C _X(t)=:\S_X(t)$ was a subpartition of $X$, and thus the map $\S_X:t\mapsto \S_X(t)$ is a treegram. Moreover, by definition, the join of all those treegrams $\S_X$ that are constructed is equal to $\C _X$ (where they are finitely many of them since $X$ is finite), and thus (i) $\C _X$ is a finite join of at least two treegrams and (ii) each of those treegrams is different from $\C _X$, where both (i) and (ii) follow by the assumption that $\C _X$ itself is not a treegram. Hence, $\C _X$ is join-reducible, a contradiction. 

Lastly, note that treegrams are also join-dense in the lattice of facegrams. Indeed, if $\C _X$ is a facegram, then there would be a finite set $S$ of critical points. If we take the sublattice of facegram with the same critical set $S$, then since $X$ is finite too, that sublattice would be finite, thus the facegrams would be equal to a join-span of join-irreducible, i.e.~treegrams (see Ex.~\ref{ex:irred in the finite case are join-dense}).
\end{proof} 
In addition, we obtain the following interesting decomposition, connecting filtrations with \emph{tree filtrations} $\T _X$, i.e.~the Vietoris-Rips filtrations of symmetric ultranetworks $U_X$.  

\begin{corollary}
\label{cor:filtration-irreducible}
    Let $\F _X:\mathbf{pow}(X)\to\R$ be a filtration. Because of \textit{Thm.~\ref{thm:filtration-irreducible}} and \textit{Defn.~\ref{dfn:spanning}} $\F _X$ admits the following join decomposition into spanning tree filtrations, i.e.:
    $$\F _X = \min_{\T _X\geq \F _X} \T _X.$$
    This decomposition is not unique among join decompositions of $\F _X$, since in the above decomposition of $\F _X$, one can only consider the $\vee$-maximal spanning elements (tree filtrations $\T _X$) of $\F _X$ in the lattice of filtrations $(\mathbf{Filt}(X),\geq,\min,\max)$ (note that the join of that lattice is the minimum of the filtration values because the poset-order is given by the reverse inequality). 
\end{corollary}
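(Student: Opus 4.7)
The plan is to transport the join-density of treegrams inside the lattice of facegrams (Prop.~\ref{prop:filtration-irreducible}) across the lattice isomorphism of Prop.~\ref{thm:equivalencetwo} in order to identify tree filtrations as a join-dense subset of $\mathbf{Filt}(X)$, and then invoke the general minimal join-reconstruction principle of Prop.~\ref{prop:inverse} (or equivalently Prop.~\ref{prop:join-dec}, whose statement avoids any appeal to completeness).

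First I would verify that under the isomorphism $\Psi:\mathbf{Facegm}(X)\to\mathbf{Filt}(X)$ treegrams correspond exactly to tree filtrations, i.e.~to Vietoris--Rips filtrations of symmetric ultranetworks: this is a direct unfolding of Defn.~\ref{dfn:diameter} together with Rem.~\ref{rem:facegram generlizes cliquegram}, which already records that the facegram attached to $\mathbf{VR}_X$ is the cliquegram (here, treegram) of the underlying ultranetwork. Since Prop.~\ref{prop:filtration-irreducible} shows treegrams are join-dense in $\mathbf{Facegm}(X)$ and lattice isomorphisms preserve join-density, the set $\mathcal{B}\subset\mathbf{Filt}(X)$ of tree filtrations is join-dense. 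Applying Prop.~\ref{prop:inverse} to $F_X$ with the join-dense subset $\mathcal{B}$ then yields
\[F_X=\bigvee\mathcal{B}(F_X),\qquad \mathcal{B}(F_X)=\{T_X\in\mathcal{B}\mid T_X\leq F_X\text{ in the lattice order of }\mathbf{Filt}(X)\}.\]

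To reach the formula stated in the corollary I would unfold the lattice operations on $\mathbf{Filt}(X)$: its partial order is the reverse of the natural pointwise order on real-valued maps and its join is the pointwise minimum. Hence ``$T_X\leq F_X$ in the lattice'' translates to ``$T_X(S)\geq F_X(S)$ for every $S\in\mathbf{pow}(X)$'', and the abstract join evaluates pointwise to a minimum, giving $F_X=\min_{T_X\geq F_X}T_X$ as displayed. For the non-uniqueness clause I would observe that any non-$\vee$-maximal element of $\mathcal{B}(F_X)$ is lattice-dominated by a maximal one and is therefore redundant in the join, so $F_X$ is equally recovered by joining only the $\vee$-maximal spanning tree filtrations, i.e.~the pointwise-minimal tree filtrations still pointwise-dominating $F_X$; the freedom in choosing such maximal representatives is the source of non-uniqueness.

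The only genuinely delicate point, and the one I expect to be the main (if minor) obstacle, is keeping the two orderings straight throughout: on $\mathbf{Filt}(X)$ the lattice order is opposite to the pointwise order, so that the words ``spanning'', ``maximal'', and ``join'' all acquire their reversed pointwise meanings (``dominating'', ``pointwise-minimal among those dominating'', and ``pointwise min'', respectively). Once that bookkeeping is made explicit, both assertions of the corollary fall out directly from the general lattice machinery and Prop.~\ref{prop:filtration-irreducible}.
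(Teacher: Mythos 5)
Your argument is correct and follows exactly the route the paper intends: the corollary is stated as an immediate consequence of Prop.~\ref{prop:filtration-irreducible} (join-density of treegrams among facegrams), transported through the isomorphism of Prop.~\ref{thm:equivalencetwo} and fed into the general reconstruction principle of Prop.~\ref{prop:inverse}/Prop.~\ref{prop:join-dec}, with the order reversal on $\mathbf{Filt}(X)$ turning the abstract join into a pointwise minimum. Your explicit bookkeeping of the reversed order and of why only the $\vee$-maximal spanning tree filtrations matter simply fills in details the paper leaves implicit, so no further changes are needed.
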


\begin{example}\label{ex:filtration_decomposition}
Given taxa set $X=\{ a, b, c, d\}$, let the following filtration $\F _X$ be given by the  following assignments:
\begin{align*}
\{ a \} & \mapsto 0 & \{ b \} & \mapsto 0 & \{ c \} & \mapsto 0 & \{ d \} & \mapsto 0 \\
\{ a, b \} & \mapsto 1 & \{ b, c \} & \mapsto 1 & \{ c, d \} & \mapsto 1\\
\{ a, b, c \} & \mapsto 2 & \{ b, c, d \} & \mapsto 2 & \{ a, b, c, d \} & \mapsto 3
\end{align*}

By \textit{Defn.~\ref{dfn:diameter}} a tree filtration is the diameter filtration of a symmetric ultranetwork. Let us list the following tree filtrations via their pairwise distances

\begin{center}{\Small
\begin{tabular}{c|cccc}
$U^1_X$ & a & b & c & d \\
\hline
a & 0 & 1 & 2 & 3 \\ 
b & 1 & 0 & 2 & 3 \\
c & 2 & 2 & 0 & 3 \\
d & 3 & 3 & 3 & 0
\end{tabular}
\quad
\begin{tabular}{c|cccc}
$U^2_X$ & a & b & c & d \\
\hline
a & 0 & 3 & 3 & 3 \\ 
b & 3 & 0 & 2 & 2 \\
c & 3 & 2 & 0 & 1 \\
d & 3 & 2 & 1 & 0

\end{tabular}
\quad
\begin{tabular}{c|cccc}
$U^3_X$ & a & b & c & d \\
\hline
a & 0 & 3 & 3 & 3 \\ 
b & 3 & 0 & 1 & 2 \\
c & 3 & 1 & 0 & 2 \\
d & 3 & 2 & 2 & 0
\end{tabular}
\quad
\begin{tabular}{c|cccc}
$U^4_X$ & a & b & c & d \\
\hline
a & 0 & 2 & 2 & 3 \\ 
b & 2 & 0 & 1 & 3 \\
c & 2 & 1 & 0 & 3 \\
d & 3 & 3 & 3 & 0
\end{tabular}
}\end{center}

which correspond to the following assignments, where $\T ^i_X$ is the tree filtration of the symmetric ultranetwork $U_X^i$ for all $i=1,2,3,4$:
\begin{align*}
\T ^i_X(\{ x \}) &= 0 &  \text{for } x \in X && \T ^i_X(\{ a, b, c, d \}) &= 3\\
T^1_X( \{ a, b \})&=1 & T^1_X( \{ a, b, c \})&=2, 
& T^2_X( \{ c,d  \})&=1 & T^2_X( \{ b,c,d \})&=2 \\
T^3_X( \{ b,c \})&=1 & T^3_X( \{ b,c,d \})&=2,
&T^4_X( \{ b,c \})&=1 & T^4_X( \{ a,b,c \})&=2
\end{align*}
Any elements of the power set not listed above for the different tree filtrations are set to 3.

Then we get the following join-decompositions:
\[
F_X 
= T^1_X \vee T^2_X \vee T^3_X \vee T^4_X 
= T^1_X \vee T^2_X \vee T^3_X 
= T^1_X \vee T^2_X \vee T^4_X.
\]

Note that, $F_X \neq T^1_X \vee T^2_X$.
Furthermore, the decomposition in \textit{Cor.~\ref{cor:filtration-irreducible}} is not a minimal decomposition in the sense that we can find fewer trees whose join yields the filtration, as in this example we have three trees whose join give the filtration.
\end{example}

\begin{remark}
\label{rem:epi}
    Any cliquegram is a facegram, but not vice versa. In addition, there is a surjective lattice morphism from facegrams to cliquegrams: for any given facegram $\caC _X:(\R,\leq)\to\mathbf{Face}(X)$ we can construct a cliquegram $\caC '_X:(\R,\leq)\to\mathbf{Cliq}(X)$ as follows: for each $t\in\R$, we define the $\caC '_X(t)$ to be the set of maximal cliques of the $1$-dimensional skeleton (underlying graph) of the simplicial complex generated by the face-set $\caC _X(t)$. See Fig.~\ref{fig:facegram}.
\end{remark}
\begin{figure}[ht]
    \centering
    \includegraphics[width=0.95\textwidth]{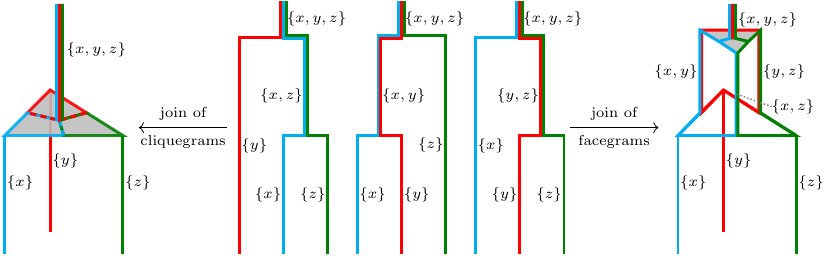}
    \caption{The left shows the join operation on cliquegrams. On the right, there is the join operation on the richer structure of facegrams. The cliquegram can be obtained by the facegram by squashing the three loops. So, the facegram contains reticulation cycles that are not seen by the cliquegram.}
    \label{fig:facegram}
\end{figure}

We also obtain the following results on cliquegrams.
\begin{proposition}
\label{prop:clique-dense}
Treegrams may not be join-irreducible in the lattice of cliquegrams. However,  treegrams are join-dense in the lattice of cliquegrams.  
Thus, by Rem.~\ref{rem:PNRP} if we use the cliquegram as a model of phylogenetic networks, the PNRP still admits a solution in this setting.
\end{proposition}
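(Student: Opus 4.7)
The proposition splits into two claims, which I address separately. For the first claim (failure of join-irreducibility) I will exhibit an explicit counterexample. Let $X=\{a,b,c\}$ and let $T$ be the treegram with $T(t)=\{\{a\},\{b\},\{c\}\}$ for $0\leq t<1$ and $T(t)=\{\{a,b,c\}\}$ for $t\geq 1$ (and empty below $0$). Define two cliquegrams $C_1,C_2$ that agree with $T$ outside $[1,2)$ and on $[1,2)$ take the values $C_1(t)=\{\{a,b\},\{b,c\}\}$ and $C_2(t)=\{\{a,c\}\}$. Both are valid clique-sets: for $C_1$, axiom (ii) of Defn.~\ref{defn:clique-set} is vacuous because the pair $\{a,c\}$ is not covered by any clique; for $C_2$ this is trivial. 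They are also strictly below $T$ on $[1,2)$, yet dominated by $T$ in the cliquegram order. Computing the pointwise join at $t\in[1,2)$, the clique-set join first takes the union $\{\{a,b\},\{b,c\},\{a,c\}\}$, and then axiom (ii) forces $\{a,b,c\}$ into the join (all pairs are now covered), absorbing the three edge cliques, so $C_1\vee C_2=T$. Hence $T$ is join-reducible in $\mathbf{Cliqgm}(X)$.

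For the second claim (join-density of treegrams), my plan is to pass through the lattice isomorphism $\mathbf{Cliqgm}(X)\cong\mathbf{Net}(X)$ of Prop.~\ref{prop:equivalence}. Under this isomorphism treegrams correspond to symmetric ultranetworks (Ex.~\ref{ex:common-distances}), joins become entrywise minima, and the partial order is reversed. It therefore suffices to show that every phylogenetic network $N_X:X\times X\to\R$ is the entrywise minimum of a family of symmetric ultranetworks, each dominating $N_X$ entrywise. Let $M:=\max_{a,b\in X}N_X(a,b)$. For each unordered pair $\{x_0,y_0\}\subset X$ with $x_0\neq y_0$, define the symmetric map $U_{\{x_0,y_0\}}:X\times X\to\R$ by setting $U_{\{x_0,y_0\}}(a,a):=N_X(a,a)$, $U_{\{x_0,y_0\}}(x_0,y_0):=N_X(x_0,y_0)$, and $U_{\{x_0,y_0\}}(a,b):=M$ for all other off-diagonal entries. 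Then $U_{\{x_0,y_0\}}\geq N_X$ entrywise is immediate, and for every pair $(a,b)$ (including the diagonal case $a=b$) the choice $\{x_0,y_0\}=\{a,b\}$ realizes $N_X(a,b)$, whence $\min_{\{x_0,y_0\}}U_{\{x_0,y_0\}}=N_X$.

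It remains to verify that each $U_{\{x_0,y_0\}}$ satisfies the ultranetwork inequality $U(a,c)\leq\max\{U(a,b),U(b,c)\}$ for all triples. I plan to do this by a short case analysis according to whether each of the three pairs equals $\{x_0,y_0\}$ or touches the diagonal: off-diagonal cases collapse to inequalities among $N_X(x_0,y_0)$ and $M$, which are trivially satisfied since $N_X(x_0,y_0)\leq M$; the cases involving a diagonal entry $N_X(a,a)$ are handled by the phylogenetic-network axiom $N_X(a,a)\leq N_X(a,b)$ of Defn.~\ref{dfn:phylo-net}, which yields $U_{\{x_0,y_0\}}(a,a)\leq U_{\{x_0,y_0\}}(a,b)$ in every subcase. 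The main (mild) obstacle is simply keeping this case analysis organized; once complete, the decomposition $N_X=\min_{\{x_0,y_0\}}U_{\{x_0,y_0\}}$ translates through Prop.~\ref{prop:equivalence} to $C_X=\bigvee_{\{x_0,y_0\}}T_{\{x_0,y_0\}}$ with each treegram $T_{\{x_0,y_0\}}\leq C_X$, establishing join-density.
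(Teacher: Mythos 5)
Your first claim contains a genuine (though easily repaired) error: $C_2$ as defined is not a cliquegram. On $[0,1)$ you have $C_2(t)=\{\{a\},\{b\},\{c\}\}$, while on $[1,2)$ you set $C_2(t)=\{\{a,c\}\}$; the monotonicity condition $C_2(t)\leq C_2(s)$ for $t\leq s$ in Defn.~\ref{dfn:cliquegram} requires every clique at time $t$ to be contained in some clique at time $s$, and the singleton $\{b\}$ is contained in no clique of $\{\{a,c\}\}$. (The clique-set axioms of Defn.~\ref{defn:clique-set} are indeed satisfied by $\{\{a,c\}\}$, as you note; it is the order-preservation of the map $t\mapsto C_2(t)$ that fails.) The fix is to keep the orphaned taxon as a singleton clique, i.e., take $C_2(t)=\{\{a,c\},\{b\}\}$ on $[1,2)$: this is a subpartition, hence a clique-set, monotonicity holds, the union graph at those times is still complete, so $C_1\vee C_2=T$ and the counterexample goes through. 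This is precisely why the paper's own static example writes $\{\{x,y\},\{z\}\}\vee\{\{y,z\},\{x\}\}\vee\{\{z,x\},\{y\}\}$, retaining the leftover singletons.

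For join-density your route is genuinely different from the paper's and, as far as I can check, correct. The paper lifts the cliquegram to a facegram, uses finiteness of the sublattice of facegrams with a fixed critical set so that join-irreducibles are join-dense (Ex.~\ref{ex:irred in the finite case are join-dense}), identifies those irreducibles as treegrams (Prop.~\ref{prop:filtration-irreducible}), and then pushes the resulting decomposition down along the surjective lattice morphism of Rem.~\ref{rem:epi}. You instead work entirely on the network side of Prop.~\ref{prop:equivalence} and exhibit an explicit finite family of \emph{star-like} symmetric ultranetworks $U_{\{x_0,y_0\}}$ whose entrywise minimum is $N_X$; your case analysis for the ultranetwork inequality is sound, the key observations being that when $a\neq c$ at most one of the pairs $\{a,b\},\{b,c\}$ can equal $\{x_0,y_0\}$ (so the other off-diagonal entry is $M$), and that the diagonal cases reduce to the axiom $N_X(a,a)\leq N_X(a,b)$ of Defn.~\ref{dfn:phylo-net}. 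Your argument buys an explicit, constructive spanning family of $\binom{n}{2}$ treegrams and avoids the detour through facegrams; the paper's argument buys uniformity, since the same mechanism treats facegrams and cliquegrams and identifies the join-irreducibles along the way. The one degenerate case you should flag is $|X|=1$, where your family of pairs is empty; there the cliquegram is itself a treegram and density is trivial.
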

\begin{proof}
Firstly, to see why treegrams may not be join-irreducible, it suffices to restrict to the static (non-filtered) case of clique-sets. The partition $\{\{x,y,z\}\}$ of $X=\{x,y,z\}$ is a clique-set which is not join-irreducible because one has 
$$\{\{x,y,z\}\}=\{\{x,y\},\{z\}\}\vee \{\{y,z\},\{x\}\}\vee \{\{z,x\},\{y\}\}.$$ 
See also Fig. \ref{fig:facegram}. 

Next, we show that treegrams are join-dense inside the lattice of cliquegrams: Let $\caC _X$ be a cliquegram and let $\S$ be the critical set of $\caC _X$ (the finite set of all time instances $t$ where the clique-sets $\caC _X(t)$ change through time). Now, think of the cliquegram as a facegram. Consider the sublattice of facegrams with critical set $\S$. Then, this sublattice is finite (since $\S$ and $X$ are finite). Thus, as we have seen in Ex.~\ref{ex:irred in the finite case are join-dense}, the facegram $\C _X$ can be written as a join-span of some join-irreducible elements. By 
Thm.~\ref{thm:filtration-irreducible}, those are treegrams. Then, we apply the surjective lattice morphism in Rem.~\ref{rem:epi} from the lattice of facegrams to the lattice of cliquegrams. Since this is a lattice morphism it respects the joinands (the treegrams), and hence, we can write the cliquegram as a join-span of treegrams inside the lattice of cliquegrams now. That means treegrams are join-dense inside the lattice of cliquegrams (although not join-irreducible).
\end{proof}
\begin{corollary} \label{cor:cliq-irred}
Symmetric ultranetworks over $X$ are join-dense  inside the lattice of phylogenetic networks over $X$, $\mathbf{Net}(X)$. In particular, in the setting of $\mathbf{Net}(X)$, PNRP admits a solution.  
\end{corollary}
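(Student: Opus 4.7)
My plan is to derive the corollary from the lattice isomorphism $\Phi: \mathbf{Net}(X) \to \mathbf{Cliqgm}(X)$ of Prop.~\ref{prop:equivalence}, which preserves joins and the bottom element and hence restricts to a bijection between the sets of join-irreducible elements on each side. I first verify that this bijection sends symmetric ultranetworks to treegrams: given a phylogenetic network $N_X$, the threshold graph with edge set $\{\{x,x'\}: N_X(x,x') \leq t\}$ is a disjoint union of cliques for every $t$ if and only if $N_X$ satisfies the ultrametric inequality $N_X(x,z) \leq \max\{N_X(x,y), N_X(y,z)\}$, so the cliquegram $\Phi(N_X)$ consists of subpartitions (i.e.\ is a treegram) precisely when $N_X$ is a symmetric ultranetwork. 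The claim then reduces to showing that the join-irreducible elements of $\mathbf{Cliqgm}(X)$ are exactly the treegrams over $X$.

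For the direction ``join-irreducible $\Rightarrow$ treegram,'' I would argue by contrapositive in the spirit of Prop.~\ref{prop:filtration-irreducible}: if a cliquegram $C_X$ is not a treegram, let $t_1$ be the smallest critical time at which $C_X(t_1)$ contains two overlapping cliques $A, B$ with $A \cap B \neq \emptyset$. I then split $C_X(t_1)$ into two clique-sets, one retaining $A$ and one retaining $B$, and extend each of these over the remaining critical times by following $C_X$'s own forward merges up to the top level $\{X\}$, obtaining two strictly smaller cliquegrams whose pointwise join recovers $C_X$, contradicting the join-irreducibility of $C_X$.

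For the converse direction ``treegram $\Rightarrow$ join-irreducible,'' suppose $T_X = C^1_X \vee C^2_X$ with $C^i_X \in \mathbf{Cliqgm}(X)$ and $T_X$ a treegram. Since each $T_X(t)$ is a subpartition, each block $B$ of $T_X(t)$ is a maximal clique of the union of the threshold graphs of $C^1_X(t)$ and $C^2_X(t)$; I would argue that compatibility with the nested filtration structure and the unique top level $\{X\}$ forces each block $B$ to already appear as a clique of $C^i_X(t)$ for some $i$, and then propagate this assignment through time to conclude that one of the $C^i_X$ must equal $T_X$ throughout. The hard part will be precisely the clique-set identity from the proof of Prop.~\ref{prop:clique-dense}, namely $\{\{x,y,z\}\} = \{\{x,y\},\{z\}\} \vee \{\{y,z\},\{x\}\} \vee \{\{z,x\},\{y\}\}$, which shows that at the static level a partition block can be a non-trivial clique-set join; so the argument for this direction must genuinely use the global filtration data (not merely the pointwise structure) to rule out such decompositions at the full cliquegram level, and this is where the proof will most delicately engage with the precise definition of join-irreducibility inside $\mathbf{Cliqgm}(X)$.

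Once the characterization is established, the PNRP part follows from Rem.~\ref{rem:PNRP} together with Ex.~\ref{ex:irred in the finite case are join-dense}: since any given family $\mathcal{F}$ of symmetric ultranetworks can be analyzed in the finite sublattice generated by the finitely many values appearing in $\mathcal{F}$, the identification of join-irreducibles with symmetric ultranetworks provides join-density, and hence there is a unique minimal phylogenetic network above $\mathcal{F}$, namely the entrywise minimum $\bigvee \mathcal{F}$.
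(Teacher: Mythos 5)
Your reduction to the cliquegram lattice via Prop.~\ref{prop:equivalence} and your identification of symmetric ultranetworks with treegrams are both fine, and the direction ``join-irreducible $\Rightarrow$ treegram'' can be run along the lines you sketch. The genuine gap is the converse direction, and you have in fact put your finger on exactly where it breaks: the static identity $\{\{x,y,z\}\}=\{\{x,y\},\{z\}\}\vee\{\{y,z\},\{x\}\}\vee\{\{z,x\},\{y\}\}$ \emph{does} lift to the filtered level, and no appeal to ``global filtration data'' can rule it out. Concretely, take $X=\{x,y,z\}$ and the symmetric ultranetwork $U_X$ with zero diagonal and all off-diagonal entries equal to $1$. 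Let $N^1$ be the phylogenetic network with zero diagonal, $N^1(x,y)=1$ and $N^1(y,z)=N^1(z,x)=2$, and define $N^2,N^3$ cyclically. Then $U_X=N^1\vee(N^2\vee N^3)$ in $\mathbf{Net}(X)$ (the join being the entrywise minimum), while $N^1\neq U_X$ and $(N^2\vee N^3)(x,y)=2\neq1$, so $U_X$ is join-reducible. On the cliquegram side, the corresponding treegram is the join of three cliquegrams that on $[1,2)$ equal $\{\{x,y\},\{z\}\}$, $\{\{y,z\},\{x\}\}$, $\{\{z,x\},\{y\}\}$, whose union graph is the full triangle. So ``treegram $\Rightarrow$ join-irreducible in $\mathbf{Cliqgm}(X)$'' is false, which is precisely what the first sentence of Prop.~\ref{prop:clique-dense} asserts. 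The literal first sentence of the corollary is therefore in tension with the proposition it follows, and should be read as asserting join-\emph{density} rather than join-irreducibility.

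What is actually available, and all that the PNRP conclusion needs, is join-density: Prop.~\ref{prop:clique-dense} proves that treegrams are join-dense in $\mathbf{Cliqgm}(X)$ by viewing a cliquegram as a facegram, decomposing it into treegrams there (Prop.~\ref{prop:filtration-irreducible}, where treegrams genuinely are the join-irreducibles), and pushing the decomposition forward along the surjective lattice morphism of Rem.~\ref{rem:epi}. Transporting this through the isomorphism of Prop.~\ref{prop:equivalence} gives that symmetric ultranetworks are join-dense in $\mathbf{Net}(X)$, and then Prop.~\ref{prop:inverse} and Rem.~\ref{rem:PNRP} yield the PNRP statement exactly as in your last paragraph. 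So your closing argument is sound once you replace ``the identification of join-irreducibles with symmetric ultranetworks'' by a direct proof of join-density; the route through join-irreducibility is blocked.
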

In addition, we obtain the following interesting decomposition, connecting phylogenetic networks (including the case of finite metric spaces) with symmetric ultranetworks.
\begin{corollary}
    Let $N_X:X\times X\to\R$ be a phylogenetic network (e.g.~a finite metric space). Because of Prop.~\ref{prop:clique-dense} and \textit{Defn.~\ref{dfn:spanning}}, $N_X$ admits the following decomposition into spanning symmetric ultranetworks, i.e.:
    $$N_X=\min_{U_X\geq N_X} U_X.$$
    This decomposition is not unique among join decompositions of $N_X$, since in the above decomposition of $N_X$, one can only consider the $\vee$-maximal spanning elements (symmetric ultranetworks $U_X$) of $N_X$ i,n the lattice of phylogenetic networks $(\mathbf{Net}(X),\geq,\min,\max)$ (the join of that lattice is the entrywise minimum of the network matrix because the poset-order is given by the reverse inequality). 
\end{corollary}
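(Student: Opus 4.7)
The plan is to apply Prop.~\ref{prop:join-dec} (equivalently, the first half of Prop.~\ref{prop:inverse}) inside the lattice $\mathcal{L} = \mathbf{Net}(X)$, taking as the join-dense subset $\mathcal{B}$ the collection of all symmetric ultranetworks on $X$, and then to carefully translate the abstract identity $N_X = \bigvee (\mathcal{B} \cap [0, N_X])$ across the order-reversal built into the definition of $\mathbf{Net}(X)$.

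First I would invoke Cor.~\ref{cor:cliq-irred}, which is itself just the transport of Prop.~\ref{prop:clique-dense} across the lattice isomorphism $\Psi : \mathbf{Cliqgm}(X) \to \mathbf{Net}(X)$ of Prop.~\ref{prop:equivalence} (treegrams correspond to symmetric ultranetworks under $\Psi$). This guarantees that the set $\mathcal{B}$ of symmetric ultranetworks is join-dense in $\mathbf{Net}(X)$, which is the hypothesis needed to feed into Prop.~\ref{prop:join-dec}. The conclusion of that proposition then reads $N_X = \bigvee (\mathcal{B} \cap [0, N_X])$, where $[0,N_X]$ and $\bigvee$ are taken in the lattice order of $\mathbf{Net}(X)$.

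Next I would unwind the definitions from Defn.~\ref{dfn:phylo-net}: the partial order $\leq_{\mathbf{Net}}$ is the entry-wise reverse of the usual order on real matrices, and joins in $\mathbf{Net}(X)$ are entry-wise minima. Consequently the condition "$U_X \in \mathcal{B} \cap [0, N_X]$" becomes "$U_X$ is a symmetric ultranetwork with $U_X(x,x') \geq N_X(x,x')$ entry-wise", i.e.~$U_X \geq N_X$ in the sense of the corollary, and the join in the abstract identity becomes $\min_{U_X \geq N_X} U_X$, yielding the displayed equation. The non-uniqueness is then immediate: if $U_X \leq_{\mathbf{Net}} U'_X$ (i.e.~$U_X \geq U'_X$ entry-wise) are both spanning, then $U_X$ contributes nothing new to the entry-wise minimum, so it can be deleted; iterating this argument, one may restrict the decomposition to the $\vee$-maximal (equivalently, entry-wise minimal) spanning symmetric ultranetworks, while any non-maximal ones can be freely re-added, so the decomposing family is not unique.

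I do not foresee a genuine obstacle here, since the statement is essentially the image of a general lattice fact under an already-established isomorphism. The one place that requires care, and which I would underline in the write-up, is the bookkeeping of order-reversal: one must verify that "$U_X \geq N_X$ entry-wise" in the displayed formula really is the translation of the lattice relation $U_X \leq_{\mathbf{Net}} N_X$, and that the lattice join $\bigvee$ in $\mathbf{Net}(X)$ really is the pointwise minimum of matrices rather than the pointwise maximum.
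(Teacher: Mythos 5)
Your proposal is correct and follows the same route the paper intends: the corollary is stated as an immediate consequence of the join-density of symmetric ultranetworks in $\mathbf{Net}(X)$ (Prop.~\ref{prop:clique-dense} transported through the isomorphism of Prop.~\ref{prop:equivalence}, i.e.~Cor.~\ref{cor:cliq-irred}) together with Prop.~\ref{prop:join-dec}, and your careful unwinding of the order-reversal (join $=$ entry-wise minimum, $U_X\leq_{\mathbf{Net}}N_X$ iff $U_X\geq N_X$ entry-wise) is exactly the bookkeeping the paper leaves implicit. The non-uniqueness argument via discarding non-$\vee$-maximal spanning elements also matches the paper's remark.
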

    
\begin{example}
Given taxa set $X=\{ a, b, c, d\}$, the filtration given in \textit{Ex.~\ref{ex:filtration_decomposition}} is equivalent to the phylogenetic network

\begin{center}{\Small
\begin{tabular}{c|cccc}
$N_X$ & a & b & c & d \\
\hline
a & 0 & 1 & 2 & 3 \\ 
b & 1 & 0 & 1 & 2 \\
c & 2 & 1 & 0 & 1 \\
d & 3 & 2 & 1 & 0
\end{tabular}.
}\end{center}
Hence, the same decompositions apply here.
\end{example}

\subsection{Metrics for filtrations and facegrams}

A prominent notion of distance on persistence modules and broadly categories with a flow is the interleaving distance \cites{chazal2009proximity, desilva2018theory}. 

For facegrams over the same taxa set $X$ we define the interleaving metric between pairs of facegrams over $X$ and we show that it is isometric to the interleaving metric of their associated filtrations.
We recall the notion of topological equivalence, weak equivalence, and homotopy equivalence of filtrations over different vertex sets as in \cites{kim2023interleaving,memoli2017distance}. 
Then, we define the corresponding metrics for comparison of filtrations up to these three types of equivalences.
Because of the isometry between interleaving metrics of filtrations and facegrams, and because of the natural equivalence between filtrations and facegrams, the corresponding metrics for comparison of facegrams up to equivalence and up to homotopy equivalence are omitted, since we will not need them: it suffices to focus on metrics on filtrations since all the results about the metrics on filtrations can be transferred to these metrics too.

\textbf{Metrics for filtrations and facegrams over the same vertex sets}
First, we define the interleaving metrics for comparing a pair of filtrations, and facegrams, respectively.
\begin{definition}
    Let $\F _X,\F '_X:\mathbf{pow}(X)\to\R$ be two filtrations over $X$ and let $\caF _X, \caF '_X:\R\to\mathbf{Face}(X)$ be two facegrams over $X$. We consider the following interleaving distances:
    $$d_{\mathrm{I}}(\caF _X,\caF '_X):=\inf\{\e\geq0\mid \caF _X(t)\leq \caF '_X(t+\e) \text{ and }\caF '_X(t)\leq \caF _X(t+\e),\text{ for all }t\in\R\}$$
    $$d_{\mathrm{I}}(\F _X,\F '_X):=\max_{\emptyset\neq\sigma\subset X}|\F _X(\sigma)-\F '_X(\sigma)|$$
\end{definition}

\begin{proposition}
\label{prop:facegram-filtration-equiv}
Let $\F _X,\F '_X$ be two filtrations over $X$, and let $\caF _X,\caF '_X$ be the corresponding facegrams over $X$. Then we have the following interleaving isometry:
$$d_{\mathrm{I}}(\caF _X,\caF '_X)=d_{\mathrm{I}}(\F _X,\F '_X).$$
\end{proposition}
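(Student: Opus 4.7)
The plan is to prove the two inequalities $d_{\mathrm{I}}(C_X,C'_X)\leq d_{\mathrm{I}}(F_X,F'_X)$ and $d_{\mathrm{I}}(F_X,F'_X)\leq d_{\mathrm{I}}(C_X,C'_X)$ separately, using the explicit description of the isomorphism $\Phi,\Psi$ from Prop.~\ref{thm:equivalencetwo}. The main conceptual point is that, under this isomorphism, a face $\sigma$ appearing in $C_X(t)$ corresponds precisely to the smallest parameter $t=F_X(\sigma)$ at which $\sigma$ enters some maximal face, so the interleaving condition at facegram level translates directly into a uniform bound on $|F_X(\sigma)-F'_X(\sigma)|$ and vice versa.

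For the first inequality, I would set $\e:=d_{\mathrm{I}}(F_X,F'_X)$ and show that $C_X$ and $C'_X$ are $\e$-interleaved. Fix $t\in\R$ and let $\tau\in C_X(t)=\Phi(F_X)(t)$. Since $\tau$ is (by construction) a maximal subset of $X$ satisfying $F_X(\tau)\leq t$, the bound $F'_X(\tau)\leq F_X(\tau)+\e\leq t+\e$ holds. Thus $\tau$ belongs to the simplicial complex $\{S\subset X\mid F'_X(S)\leq t+\e\}$, and hence $\tau$ is contained in some maximal face in $C'_X(t+\e)$. This gives $C_X(t)\leq C'_X(t+\e)$, and by symmetry $C'_X(t)\leq C_X(t+\e)$, so $d_{\mathrm{I}}(C_X,C'_X)\leq \e$.

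For the reverse inequality, I would set $\e:=d_{\mathrm{I}}(C_X,C'_X)$ and fix an arbitrary nonempty $\sigma\subset X$. Let $t:=F_X(\sigma)=\Psi(C_X)(\sigma)$, so that $\sigma$ is contained in some face $\tau\in C_X(t)$. The interleaving gives $C_X(t)\leq C'_X(t+\e)$, meaning there exists $\tau'\in C'_X(t+\e)$ with $\tau\subset\tau'$; in particular $\sigma\subset\tau'$, so $F'_X(\sigma)\leq t+\e=F_X(\sigma)+\e$. The symmetric argument yields $F_X(\sigma)\leq F'_X(\sigma)+\e$, and since $\sigma$ was arbitrary we obtain $d_{\mathrm{I}}(F_X,F'_X)\leq \e$, completing the proof.

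I do not anticipate a real obstacle here: once the bijective correspondence in Prop.~\ref{thm:equivalencetwo} is in hand, the argument is essentially a bookkeeping exercise. The only mild subtlety is to remember that faces in $C_X(t)$ are \emph{maximal} subsets $S$ with $F_X(S)\leq t$, so that applying $F'_X$ to such a maximal face is legitimate and produces the required sublevel-set membership; after that the two directions mirror each other by symmetry.
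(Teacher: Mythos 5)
Your proof is correct, but it takes a genuinely different route from the paper: the paper's entire proof is a one-line reduction, ``by viewing facegrams equivalently as filtered simplicial complexes, the proof follows as in Thm.~4.8 of \cite{kim2019interleaving}'', whereas you verify both inequalities directly from the explicit formulas for $\Phi$ and $\Psi$ in Prop.~\ref{thm:equivalencetwo}. The trade-off is clear: the paper's version leverages an existing general isometry theorem and stays short, while yours is self-contained and makes visible exactly where the definition of the order on face-sets (``every face of $C_X(t)$ is contained in some face of $C'_X(t+\e)$'') and the sublevel-set description of $\Phi(F_X)(t)$ enter. Two small points to tighten. First, in the forward direction the maximality of $\tau$ is not what you need --- only that $\tau$ is a subset with $F_X(\tau)\leq t$, so that $F'_X(\tau)\leq t+\e$ places $\tau$ in the sublevel complex at $t+\e$ and hence inside some maximal face there; your closing remark slightly misidentifies the role of maximality. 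Second, in the reverse direction you set $\e:=d_{\mathrm{I}}(C_X,C'_X)$ and immediately invoke an $\e$-interleaving, but that distance is defined as an infimum which is not a priori attained; either observe that it is attained because both facegrams have finite critical sets (so only finitely many shift values are relevant), or run the argument with $\e+\delta$ for every $\delta>0$ and let $\delta\to0$. With those adjustments the argument is complete.
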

\begin{proof}
    By viewing facegrams equivalently as filtered simplicial complexes, then the proof follows the same way as in the proof of \textit{Thm.~4.8} in \cite{kim2023interleaving}.
\end{proof}
\textbf{Metrics for filtrations and facegrams over different vertex sets}
Quite often in applications of topology, we are interested in studying topological features of filtrations that remain invariant under a weaker notion of equivalence than just a topological equivalence. Such a notion is a weak equivalence of filtrations (also called \emph{pullbacks} \cite{memoli2017distance}), which is a special type of filtered homotopy equivalence yielded by the pullbacks of a given pair of filtrations (the name pullback appears in \cite{memoli2017distance}*{Sec.~4.1} and later the name weak equivalence appears in \cite{scoccola2020locally}*{Sec.~6.8}).

\begin{definition}\label{defn:weakeq}
Let $X$ and $Y$ be sets.
Let $\C _X$ and $C_Y$ be facegrams over $X$ and $Y$, respectively, and let $\F _X, \F _Y$ be filtrations over $X$ and $Y$, respectively. Then we define the following notions of similarity:
\begin{enumerate}
\item The facegrams $\C _X,C_Y$ are said to be \emph{isomorphic} if there exists a bijection $\varphi:X\to Y$ such that 
$C_Y(t)=\{\varphi(\sigma)\mid \sigma \in \C _X(t)\},$ 
for any $t\in\R$. Such a bijection is called an \emph{isomorphism}.
    \item The filtrations $\F _X, \F _Y$ are said to be \emph{isomorphic}, if there exists a bijection $\varphi:X\to Y$ such that $\F _Y\circ \varphi=\F _X$. Such a bijection is called an \emph{isomorphism}.
    \item Let $\F _X,\F _Y$ be filtrations over the vertex sets $X,Y$, respectively. A pair of surjections $X\xtwoheadleftarrow{\varphi_X} Z\xtwoheadrightarrow{\varphi_Y}Y$, also called a \emph{tripod}, is said to be a \emph{weak equivalence of $\F _X, \F _Y$} if 
  their pullbacks along $\varphi_X$ and $\varphi_Y$ are equal, i.e.~$\varphi_X^* \F _X=\varphi_Y^* \F _Y$. In that case, $\F _X$, $\F _Y$ are called \emph{weakly equivalent}.
\end{enumerate}
\end{definition}

Note that the term \emph{weak equivalence} was used in further generality as an equivalence between objects in certain types of categories \cite{scoccola2020locally}. Here, we define this type of equivalence for the case of filtrations.

\begin{example}
    Any isomorphism of a pair of filtrations $\F _X$, $\F _Y$ gives rise to an isomorphism of their associated facegrams $\caF _X$, $\caF _Y$, and vice versa.
\end{example}

Because of Prop.~\ref{prop:facegram-filtration-equiv},
we will only focus on the definition of distance for filtrations, since a distance for facegrams can be defined analogously. 
 
\begin{definition}[{\cite{memoli2017distance}*{Sec.~4.1.1.}}]
Let $\F _X$ and $\F _Y$ be filtrations over the sets $X$ and $Y$, respectively.
The \emph{tripod distance of $\F _X,\F _Y$} is defined as
$$d_{\mathrm{T}}(\F _X,\F _Y):=\inf_{X\xtwoheadleftarrow{\varphi_X} Z\xtwoheadrightarrow{\varphi_Y}Y}d_{\mathrm{I}}(\varphi_X^* \F _X,\varphi_Y^* \F _Y).$$
\end{definition}

For comparing phylogenetic networks, we utilize the \emph{network distance} of Chowdhury et al.~\cite{chowdhury2022distances}, which is a generalization of the Gromov-Hausdorff from the setting of metric spaces to the setting of networks. Below we are considering the equivalent definition of the network distance which uses tripods instead of correspondences \cite{chowdhury2022distances}*{Sec.~2.3}. 
\begin{definition}[{\cite{chowdhury2022distances}*{Defn.~2.27}}]
Let $N_X$ and $N_Y$ be phylogenetic networks over the sets $X$ and $Y$, respectively.
The \emph{Gromov-Hausdorff distance of $N_X,N_Y$} is defined as
$$d_{\mathrm{GH}}(N_X,N_Y):=\inf_{X\xtwoheadleftarrow{\varphi_X} Z\xtwoheadrightarrow{\varphi_Y}Y}\max_{z,z'\in Z}|N_X(\varphi_X(z),\varphi_X(z'))-N_Y(\varphi_Y(z),\varphi_Y(z'))|.$$
\end{definition}

\begin{remark}
\label{rem:GH realization remark}
    Let $(X,N_X)$ and $(Y,N_Y)$ be phylogenetic networks (e.g.~finite metric spaces) over $X$ and $Y$, respectively, and let $\mathbf{VR}_{X},\mathbf{VR}_{Y}$ be their associated Vietoris-Rips filtrations.
    Then 
    $$d_{\mathrm{GH}}((X,N_X),(Y,N_Y))=d_{\mathrm{T}}(\mathbf{VR}_{X},\mathbf{VR}_{Y}).$$
    This equality was shown in \cite{memoli2021quantitative}*{Prop.~2.8} for the case of finite metric spaces. It is easy to verify it for the more general case of phylogenetic networks, too.
\end{remark}

\section{Invariants and stability} 
 We develop the following invariants of filtrations and facegrams:
(i) the \emph{face-Reeb graph},
     (ii) the \emph{mergegram}, 
     and (iii) its refinement called the \emph{labeled mergegram}.  
We then investigate their stability and computational complexity. We show that the face-Reeb graph is not a stable invariant of facegrams over the same taxa set, but the mergegram invariant is. 
The labeled mergegram is also not stable, but will be needed for faster computations of the unlabeled mergegram in the case of the join-facegram. 
In addition, we show that the mergegram is (i) invariant of weak equivalences of filtrations and that (ii) it is a stable invariant of filtrations (and thus facegrams) over different vertex sets with respect to M\'emoli's tripod distance \cite{memoli2017distance}.
These results might be of independent interest to TDA as well.

\begin{remark}
The reader can safely skip the following subsection on face-Reeb graphs and go directly to Subsec.~\ref{subsec:mergegram}, since the face-Reeb graph is shown to be an unstable invariant of facegrams and cliquegrams and is not used in the upcoming applications. Nevertheless, it is an important natural motivation for the subsequent study of invariants of facegram.
\end{remark}

\subsection{The face-Reeb graph}
Every dendrogram has an underlying structure of a merge tree. Analogously, the facegram has an underlying structure of a rooted Reeb graph. To see this, first, we need to recall the more technical definition of rooted Reeb graphs as in \cite{stefanou2020tree} which depends on choosing a critical set. See also \cite{de2016categorified} for the standard definition.

\begin{definition}[{\cite{de2016categorified}*{Not.~2.5}}]
\label{def:Reeb def}
An \emph{$\R$-space} $(\X,f)$ is a space $\X$ together with a real-valued continuous map $f:\X\to\R$.
An $\R$-space $(\X,f)$ is said to be a \emph{rooted Reeb graph} if
it is constructed by the following procedure, which we call \emph{a structure on $(\X,f)$}:

 Let $S=\{a_1<\ldots<a_{n}\}$ be an ordered subset of $\R$ called a \emph{critical set} of $\X$.

\begin{itemize}
    \item For each $i=1,\ldots,n$ we specify a set $V_i$ of vertices which lie over $a_i$,
    \item For each $i=1,\ldots,n-1$ we specify a set of edges $E_i$ which lie over $[a_i,a_{i+1}]$
    \item For $i=n$ we specify a singleton set $E_n$ that lie over $[a_n,\infty)$,
    \item For $i = 1,\ldots,n$, we specify a down map $\ell_i:E_i \to V_i$
    \item For $i = 1,\ldots,n-1$, we specify an upper map $r_i: E_i \to V_{i+1}$.
\end{itemize}
The space $\X$ is the quotient $\U/\sim$ of the disjoint union
$$\U=\coprod_{i=1}^{n}(V_i\times\{a_i\})\coprod_{i=1}^{n-1}(E_i\times [a_i,a_{i+1}])\coprod(E_n\times [a_n,\infty))$$
with respect to the identifications $(\ell_i(e),a_i)\sim(e,a_i)$ and $(r_i(e),a_{i+1})\sim(e,a_{i+1})$, with the map $f$ being the projection onto the second factor.
\end{definition}
\begin{definition}
     Let $\caF _X$ be a facegram with critical set $a_1<\ldots<a_n$. The underlying \emph{face-Reeb graph} $R_X$ of $\caF _X$ is determined by $\left(\{V_{i}\}_{i=1}^n,\{E_i\}_{i=1}^{n},\{\ell_i:E_i\to V_i\}_{i=1}^{n},\{r_i:E_i\to V_{i+1}\}_{i=1}^n\right)$, where,
 \begin{align*}
     E_i&:=\pi_0(\caF _X(a_i),\subset)=\caF _X(a_i)\\
     V_i&:=\pi_0\left(\caF _X(a_{i-1})\cup \caF _X(a_{i}),\subset\right)\\
     \ell_i&:=\pi_0\left( \caF _X(a_i)\hookrightarrow \caF _X(a_{i-1})\cup \caF _X(a_{i})\right)\\
     r_i&:=\pi_0\left(\caF _X(a_i)\hookrightarrow \caF _X(a_{i})\cup \caF _X(a_{i+1})\right).
 \end{align*}
\end{definition}
Thus, every facegram has an associated Reeb graph structure. However, the Reeb graph structure is not a stable invariant of facegrams as shown in the figure below.

\begin{figure}[ht]
\centering
\includegraphics[width=0.95\textwidth]{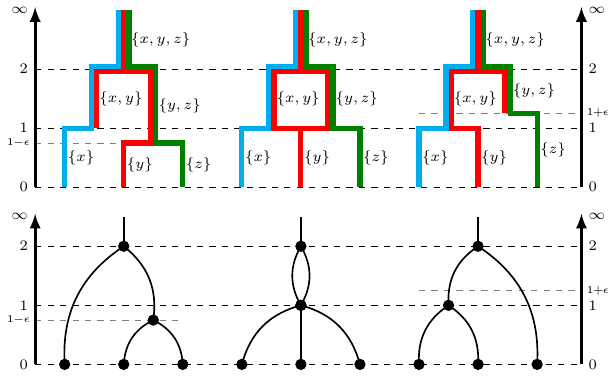}
\caption{{\bf Facegrams and their face-Reeb graphs for small perturbations.} 
On the top there are three facegrams and on the bottom their associated face-Reeb graphs. On the top of the figure, on the left and to the right of the middle facegram there are two facegrams that are $\e$-close to it, in the interleaving distance. However, for small $\e\geq0$, at least for $\e$-values that are smaller than half the height of the loop of the middle facegram, the associated Reeb graphs of those facegrams are not $\e$-close to the middle facegram, since their difference is at least as large as half of the height of the loop of the middle facegram. That means that a small perturbation of the facegrams can lead to a large difference in the face-Reeb graph of the facegrams.}
\label{fig:instability}
\end{figure}
 This example motivated us to look for stable invariants of facegrams (our main model of phylogenetic reconstruction).  
Such an invariant of facegrams should be (i) stable under perturbations of facegrams, (ii) computable, (iii) while capturing as much information as possible. 
This will be discussed below.

\subsection{The mergegram invariant}\label{subsec:mergegram}

In this section, we extend the notion of a mergegram by Elkin, Kurlin \cite{elkin2020mergegram} from the setting of dendrograms to the setting of facegrams. 
Furthermore, in the next sections we show that the mergegram in the setting of facegrams is still stable. Additionally, we show that the mergegram is invariant of weak equivalence of filtrations (a stronger notion of homotopy equivalence of filtration and weaker than isomorphism).

\begin{definition}
Let $\caF _X:(\R,\leq)\to\mathbf{Face}(X)$ be a facegram over $X$. We define the \emph{mergegram of $\caF _X$} as the multiset (i.e.~set with repeated elements)
of intervals
$$\mathbf{mgm}(\caF _X):=\left\{\left\{I_\sigma\mid \sigma\subset X
\right\}\right\}\text{, where }I_\sigma:=\caF _X^{-1}(\sigma)=\{t\in\R\mid \sigma\in \caF _X(t)\}.$$
Given a $\sigma\subset X$, $I_\sigma\subset\R$ is either an interval or $\emptyset$, and it is called \emph{the lifespan of $\sigma$ in $\caF _X$}.  
\end{definition}

 For an example of a mergegram see Fig.~\ref{fig:labeledmergegram}.~It is easy to check that the mergegram of a facegram corresponds to the edges of the face-Reeb graph of the facegram, viewed as intervals of $\R$. Thus, the face-Reeb graph is more informative than the mergegram.
Just as in the case of dendrograms, the mergegram and the Reeb graph of a facegram are topological invariants of the facegram. 

\begin{proposition}
The mergegram of a facegram is invariant to isomorphisms of facegrams. 
\end{proposition}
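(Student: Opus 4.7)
The plan is to show that any isomorphism of facegrams induces a bijection of subsets that preserves lifespans, and therefore produces an equality of multisets of intervals.

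First, I would fix an isomorphism $\varphi : X \to Y$ between facegrams $C_X$ and $C_Y$, meaning that for every $t \in \R$ one has
\[
C_Y(t) = \{\varphi(\sigma) \mid \sigma \in C_X(t)\}.
\]
The induced map $\Phi : \mathbf{pow}(X) \to \mathbf{pow}(Y)$ sending $\sigma \mapsto \varphi(\sigma)$ is a bijection, since $\varphi$ is a bijection of the underlying taxa sets. Hence $\Phi$ is a candidate for a bijection between the indexing sets of the two mergegram multisets.

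Next, I would show that $\Phi$ preserves lifespans, i.e.~$I^{C_X}_\sigma = I^{C_Y}_{\varphi(\sigma)}$ for every $\sigma \subset X$. By definition, $t \in I^{C_X}_\sigma$ iff $\sigma \in C_X(t)$. Applying the defining condition of an isomorphism and using that $\varphi$ is injective (so $\varphi(\sigma) = \varphi(\sigma')$ forces $\sigma = \sigma'$), this is equivalent to $\varphi(\sigma) \in C_Y(t)$, i.e.~to $t \in I^{C_Y}_{\varphi(\sigma)}$. This gives the equality of intervals.

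Finally, since $\Phi$ is a bijection on subsets and $I^{C_X}_\sigma = I^{C_Y}_{\Phi(\sigma)}$ for every $\sigma$, the two multisets
\[
\{\{I^{C_X}_\sigma \mid \sigma \subset X\}\} \quad \text{and} \quad \{\{I^{C_Y}_\tau \mid \tau \subset Y\}\}
\]
coincide as multisets of intervals in $\R$, which is precisely $\mathbf{mgm}(C_X) = \mathbf{mgm}(C_Y)$. There is no real obstacle here; the only subtlety is to handle the empty lifespans consistently (subsets $\sigma$ that never appear in $C_X$ give $I^{C_X}_\sigma = \emptyset$, matched under $\Phi$ with subsets of $Y$ having the same empty lifespan), which is automatic from the equivalence chain above.
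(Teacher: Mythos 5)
Your proof is correct and follows essentially the same route as the paper's: the bijection $\varphi$ induces a bijection on subsets, the isomorphism condition gives $\sigma\in C_X(t)\Leftrightarrow\varphi(\sigma)\in C_Y(t)$, hence lifespans are preserved and the multisets of intervals agree. Your version is slightly more explicit about where injectivity is used and about the treatment of empty lifespans, but the argument is the same.
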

\begin{proof}
Suppose that $f:X\to Y$ is an equivalence between $\caF _X$ and $C_Y$. Since also $f:X\to Y$ is a bijection, for any $\sigma\subset X$, $\sigma$ and $f(\sigma)$ have the same cardinalities. In particular, $\sigma\in \caF _X$ if and only if $f(\sigma)\in \caF _Y$. Thus, $I_\sigma=I_{f(\sigma)}$ whenever $I_\sigma\neq\emptyset$ and $\sigma\neq\emptyset$. Therefore, $\mathbf{mgm}(\caF _X)=\mathbf{mgm}(\caF _Y)$.  
\end{proof}

\textbf{A refinement of the mergegram} In general, the mergegram is not a complete invariant of facegrams, meaning that there exist different facegrams with the same mergegram. For instance, if we consider a non-tree like facegram and then consider any of its spanning treegrams (viewed as facegrams). These spanning treegrams will contain the same lifespans of edges, and hence the same mergegrams, as the given facegram. We can strengthen the mergegram invariant by considering labeling the lifespans of the edges by their associated simplices. We define this formally as follows.
\begin{definition}
Let $\caF _X:(\R,\leq)\to\mathbf{Face}(X)$ be a facegram over $X$. We define the \emph{labeled mergegram of $\caF _X$} as the set
$$\mathbf{mgm}^{*}(\caF _X):=\{(\sigma,I_\sigma)\mid \emptyset\neq\sigma\subset X\text{ and }I_\sigma\neq\emptyset \}.$$
\end{definition}

  We have the following decomposition result.
  \begin{proposition}
  \label{prop:decomp-clut}
  Let $\caF _X:(\R,\leq)\to \mathbf{Face}(X)$ be a facegram. For any $t\in \R$ we have 
  $$\caF _X(t)=\bigvee_{\emptyset\neq\sigma\subset X} \caF _X^{(\sigma)}(t),$$
  where the face-set $\caF _X^{(\sigma)}(t)$ is given by
  \[\caF _X^{(\sigma)}(t):=
   \begin{cases} 
      \{\sigma\}, & t\in I_\sigma \\
        \emptyset, & \text{otherwise.} 
   \end{cases}
\]
  \end{proposition}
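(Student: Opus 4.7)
The plan is to verify the identity pointwise: both sides are maps $\R\to\mathbf{Face}(X)$, and the join on the right is taken pointwise in $\mathbf{Face}(X)$. So I would fix $t\in\R$ and show
\[
C_X(t)=\bigvee_{\emptyset\neq\sigma\subset X} C_X^{(\sigma)}(t).
\]

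First, unfold the right-hand side at $t$. By the definition of $C_X^{(\sigma)}$ and of $I_\sigma$, one has $C_X^{(\sigma)}(t)=\{\sigma\}$ iff $t\in I_\sigma$ iff $\sigma\in C_X(t)$, and $C_X^{(\sigma)}(t)=\emptyset$ otherwise. Since $X$ is finite, $\mathbf{pow}(X)$ is finite, so the indexed join on the right reduces to the finite join $\bigvee_{\sigma\in C_X(t)}\{\sigma\}$ inside the lattice $\mathbf{Face}(X)$, and in particular it is well-defined.

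Next, identify this finite join with $C_X(t)$. By Rem.~\ref{rem:joinstwo}, joining a family of face-sets amounts to taking the union of the simplicial complexes they generate and then reading off the maximal faces of the result. The simplicial complex generated by the singleton face-set $\{\sigma\}$ is the full simplex on $\sigma$ (together with its faces), so taking the union over $\sigma\in C_X(t)$ produces exactly the simplicial complex generated by $C_X(t)$. Its maximal faces are precisely the elements of $C_X(t)$ itself, because by Defn.~\ref{dfn:face-set} the members of a face-set are pairwise incomparable with respect to inclusion, so no element of $C_X(t)$ is absorbed by another and every face of the generated complex lies in some element of $C_X(t)$.

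There is no real obstacle here; the argument is essentially an unfolding of definitions combined with the join description of Rem.~\ref{rem:joinstwo}. The only subtle point worth flagging is that each $C_X^{(\sigma)}$ is \emph{not} in general a facegram in its own right: if $I_\sigma$ is bounded above (the face $\sigma$ eventually gets absorbed into a larger maximal face), then the map $t\mapsto C_X^{(\sigma)}(t)$ is not order-preserving. This is harmless for the statement, since the identity is checked pointwise in $\mathbf{Face}(X)$, and finiteness of $X$ guarantees we are always taking a finite join at each $t$.
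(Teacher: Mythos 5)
Your proof is correct and follows essentially the same route as the paper, which simply invokes (pointwise) the fact that a simplicial complex is determined by its set of maximal faces; you have just unfolded that one-line argument via Rem.~\ref{rem:joinstwo} and the antichain property of face-sets. The observation that each $C_X^{(\sigma)}$ need not itself be a facegram is accurate and harmless, exactly as you say.
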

  \begin{proof}
       The proof follows directly by the fact that every simplicial complex is determined by its clique set of maximal faces. So, we apply this fact point-wisely in the facegram. 
  \end{proof}
\begin{corollary}
The labeled mergegram $\caF _X\mapsto \mathbf{mgm}^*(\caF _X)$ is an injection. Hence, it is a complete invariant of facegrams over $X$.
\end{corollary}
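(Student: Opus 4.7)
The plan is to show that the assignment $C_X \mapsto \mathbf{mgm}^*(C_X)$ admits a left inverse, by reconstructing each value $C_X(t)$ directly from the data $\mathbf{mgm}^*(C_X)$, using the pointwise decomposition from Prop.~\ref{prop:decomp-clut} as the key tool. Concretely, given the labelled mergegram of a facegram, for every non-empty $\sigma\subset X$ we read off $I_\sigma$: it is the interval paired with $\sigma$ in $\mathbf{mgm}^*(C_X)$ if such a pair exists, and $I_\sigma=\emptyset$ otherwise. The auxiliary facegrams $C_X^{(\sigma)}$ from Prop.~\ref{prop:decomp-clut} are then determined pointwise, and by that proposition $C_X(t)=\bigvee_{\emptyset\neq\sigma\subset X} C_X^{(\sigma)}(t)$ for every $t\in\R$.

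Turning this into an injectivity argument, suppose $C_X$ and $C'_X$ are two facegrams over $X$ with $\mathbf{mgm}^*(C_X)=\mathbf{mgm}^*(C'_X)$. First I would observe that the set of simplices $\sigma$ with $I_\sigma\neq\emptyset$ is recorded in the labelled mergegram, and their intervals are recorded as well; thus for every $\sigma\subset X$ we have $I_\sigma^{C_X}=I_\sigma^{C'_X}$ as subsets of $\R$ (with both being empty in the case where $\sigma$ does not appear in the labelled mergegram of either facegram). Consequently, for every $t\in\R$ and every $\sigma$ we get the pointwise equality $C_X^{(\sigma)}(t)=(C'_X)^{(\sigma)}(t)$, using the explicit formula in Prop.~\ref{prop:decomp-clut}.

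Finally, I would take the join over all $\sigma$ and invoke Prop.~\ref{prop:decomp-clut} once for each facegram:
\[
C_X(t) \;=\; \bigvee_{\emptyset\neq\sigma\subset X} C_X^{(\sigma)}(t) \;=\; \bigvee_{\emptyset\neq\sigma\subset X} (C'_X)^{(\sigma)}(t) \;=\; C'_X(t),
\]
for every $t\in\R$, which forces $C_X=C'_X$. This yields injectivity of $\mathbf{mgm}^*$, and hence the labelled mergegram is a complete invariant.

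There is no real obstacle here: the heavy lifting is done by Prop.~\ref{prop:decomp-clut}, and the only subtlety is being careful that an absent simplex in $\mathbf{mgm}^*$ is interpreted as $I_\sigma=\emptyset$ (rather than unknown). This is a consequence of the definition of the labelled mergegram, which records exactly the pairs $(\sigma,I_\sigma)$ with $I_\sigma\neq\emptyset$, so the information is indeed fully encoded.
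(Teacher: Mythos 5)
Your proposal is correct and follows essentially the same route as the paper: extract the equality of all lifespans $I_\sigma$ from the equality of labelled mergegrams, deduce the pointwise equality of the auxiliary facegrams $C_X^{(\sigma)}$, and conclude via the join decomposition of Prop.~\ref{prop:decomp-clut}. Your extra remark that an absent simplex must be read as $I_\sigma=\emptyset$ is a reasonable clarification but does not change the argument.
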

\begin{proof}
Let $\caF _X,\caF '_X$ be two facegrams over $X$.
Suppose that $\mathbf{mgm}^{*}(\caF _X)=\mathbf{mgm}^{*}(\caF '_X)$. Let $I_\sigma^{(\caF _X)},I_\sigma^{(\caF '_X)}$, $\emptyset\neq\sigma\subset X$, be the lifespans of $\caF _X,\C '_X$, respectively. Then, by definition of $\mathbf{mgm}^*$, we obtain $(\sigma,I_\sigma^{(\caF _X)})=(\sigma,I_\sigma^{(\caF '_X)})$, for all $\emptyset\neq\sigma\subset X$. Hence, $\caF _X^{(\sigma)}(t)={\caF '}_X^{(\sigma)}(t)$, for all $\emptyset\neq\sigma\subset X$ and all $t\in\R$. By Prop.~\ref{prop:decomp-clut}, we obtain $\caF _X(t)=\caF '_X(t)$, for all $t\in\R$. Therefore, $\caF _X=\caF '_X$.
\end{proof}

\begin{figure}[ht]
\centering
\includegraphics[width=1\textwidth]{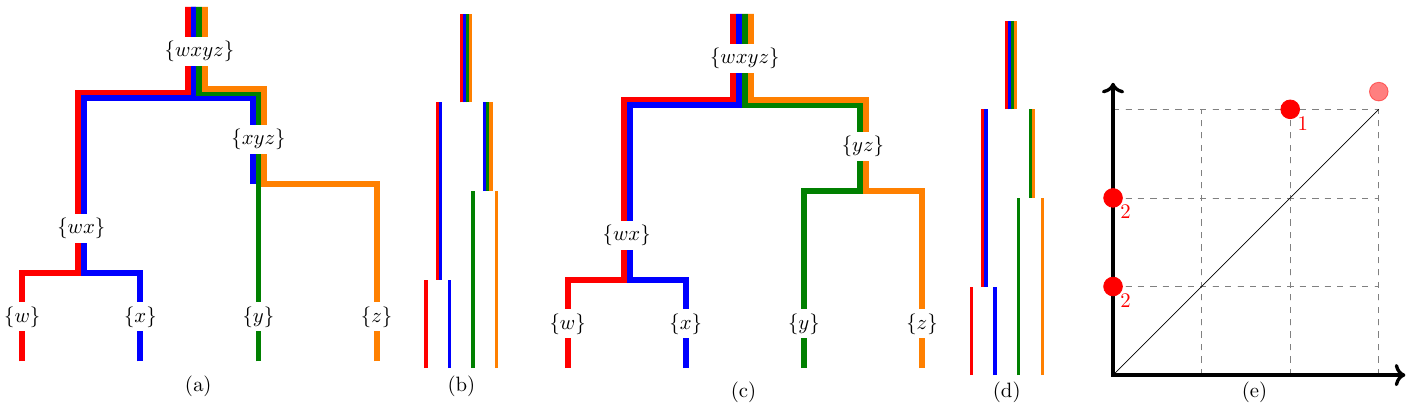}
\captionsetup{width=.95\textwidth}
\caption{{\bf Visualizations of two facegrams with the same mergegram but different labeled mergegram.} 
Visualizations (a), (c) of two facegrams having isomorphic face-Reeb graphs (if we forget the labels of their edges) and thus the same mergegrams, however differently labeled. 
The mergegrams correspond in this visualization to the vertical bars without their labels; (b), and (d) show the bars/intervals $I_\sigma$ together with their label; these are the labeled mergegrams of facegrams (a) and (c), respectively. The coloring associated with each interval is the colors of taxa in the clique. Interpreting the mergegram as a persistence diagram gives plot (e), where the multiplicity of the points is given by the numbers next to the points. The point with lower opacity is the point with infinite lifetime. 
We can see that the mergegrams are the same in (e) or in (b) and (d) when ignoring the coloring of the bars. 
On the other hand, the labeled mergegram (and the associated coloring of the bars) shows that the two facegrams differ in one face-set, namely $(\{x,y,z\}, [2,3))$ and $(\{y,z\}, [2,3))$. Finally, note that: the facegram (c) is a dendrogram and in particular a treegram. However, for the facegram (a) its associated face-Reeb graph is a merge tree and yet that facegram is neither a dendrogram nor a treegram. This implies in particular that the facegram (a) is the join span of at least two different treegrams (viewed as facegrams).
Moreover, by definition, the face-Reeb graph of a facegram would not be a merge tree if and only if there exists a taxon that can be joined with the root by at least two different \emph{consecutive} paths of maximal faces through time, and this is clearly not the case for the facegram (a) since $\{x\}$ cannot be joined by a consecutive path of inclusions of maximal faces with $\{x,y,z\}$ through time. 
}
\label{fig:labeledmergegram}
\end{figure}

\textbf{The mergegram of a filtration}
In this paragraph, we show how the mergegram of a given facegram can be expressed in terms of the filtration associated with that facegram.~This implies that the mergegram can be used as a new topological signature of filtrations without reference to the facegram, suggesting using mergegrams as a new topological signature of filtrations of datasets that can complement the existing toolkit of persistence invariants in TDA.

  \begin{proposition}
  \label{prop:mgm-formula}
    Let $\caF _X$ be a facegram over $X$ and let $\F _X:=\Psi(\caF _X)$ be the filtration over $X$ associated to $\caF _X$ via the one-to-one correspondence $(\Phi,\Psi)$ in \textit{Thm.~\ref{thm:equivalencetwo}}. Let $$\mathbf{s}(\caF _X):=\left\{\emptyset\neq\sigma\subset X\mid \sigma\in \caF _X(t)\text{, for some }t\in\R\right\}.$$ 
    Given a $\emptyset\neq\sigma\subset X$, $\sigma\in \mathbf{s}(\caF _X)$ if and only if $I_\sigma$ is a non-empty interval of $\R$. Moreover, for any $\sigma\in \mathbf{s}(\caF _X)$, $I_\sigma$ is given by the formula
$$I_\sigma=\left[\F _X(\sigma),\min_{\sigma\subsetneqq\tau\subset X} \F _X(\tau) \right).$$

 Moreover, for any $\emptyset\neq\sigma\subset X$, we have: $$\min_{\sigma\subsetneqq\tau\subset X} \F _X(\tau) =\min_{\sigma\subsetneqq\tau\in\mathbf{s}(\caF _X)} \F _X(\tau).$$ 
 Hence, the mergegram of $\caF _X$ is given by the following multiset
 of intervals:
 $$\mathbf{mgm}(\caF _X)=\left\{\left\{\left [\F _X(\sigma),\min_{\sigma\subsetneqq\tau\in\mathbf{s}(\caF _X)} \F _X(\tau) \right)\mid \sigma\in \mathbf{s}(\caF _X)\right\}\right\}.$$
 \end{proposition}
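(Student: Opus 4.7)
The plan is to translate everything back to the filtration $F_X$ using the bijection $(\Phi,\Psi)$ from Prop.~\ref{thm:equivalencetwo}, and then extract the left and right endpoints of $I_\sigma$ directly from the axioms of a filtration. Concretely, I will first observe that under $\Phi$, the face-set $C_X(t)$ equals the set of maximal faces of the simplicial complex $S_X(t) := \{\tau \in \mathbf{pow}(X) \mid F_X(\tau) \leq t\}$. Thus the condition $\sigma \in C_X(t)$ unpacks into the conjunction $F_X(\sigma) \leq t$ and $F_X(\tau) > t$ for every $\tau \supsetneq \sigma$, which is the key reformulation that drives the entire proof.

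From this conjunctive description of membership in $C_X(t)$, the formula $I_\sigma = [F_X(\sigma),\,\min_{\sigma \subsetneq \tau \subset X} F_X(\tau))$ is immediate: the left endpoint comes from the first inequality, and the supremum of $t$ satisfying the strict inequalities $F_X(\tau)>t$ for all proper supersets $\tau$ of $\sigma$ is exactly $\min_{\sigma \subsetneq \tau} F_X(\tau)$ (with the convention that a minimum over the empty set is $+\infty$, handling the case $\sigma = X$). Simultaneously, this yields the first claim: $\sigma \in \mathbf{s}(C_X)$ precisely when the resulting half-open interval is nonempty, i.e.~when $F_X(\sigma) < \min_{\sigma \subsetneq \tau} F_X(\tau)$.

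The step I expect to be the main obstacle is proving the identity
\[
\min_{\sigma \subsetneq \tau \subset X} F_X(\tau) \;=\; \min_{\sigma \subsetneq \tau \in \mathbf{s}(C_X)} F_X(\tau),
\]
since one direction requires producing, for an arbitrary proper superset $\tau_0$ achieving the minimum $m$, a \emph{maximal} face $\tau_\ast \supsetneq \sigma$ which is actually realized in some $C_X(t)$ and still satisfies $F_X(\tau_\ast) = m$. My plan is the following finite-iteration argument: if $\tau_0$ is not in $\mathbf{s}(C_X)$, then at time $m = F_X(\tau_0)$ the face $\tau_0$ lies in $S_X(m)$ but is not maximal, so there exists $\tau_1 \supsetneq \tau_0$ with $F_X(\tau_1) \leq m$; by the order-preserving property of $F_X$, in fact $F_X(\tau_1) = m$. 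Iterate; the chain $\tau_0 \subsetneq \tau_1 \subsetneq \cdots$ must terminate because $X$ is finite, and the terminal face $\tau_\ast$ is maximal in $S_X(m)$ and hence belongs to $C_X(m) \subset \mathbf{s}(C_X)$, while still satisfying $\sigma \subsetneq \tau_\ast$ and $F_X(\tau_\ast)=m$.

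Finally, the multiset formula for $\mathbf{mgm}(C_X)$ follows by collecting $I_\sigma$ over $\sigma \in \mathbf{s}(C_X)$ (those $\sigma$ for which $I_\sigma \neq \emptyset$), using the previous two identities to rewrite $I_\sigma$ in terms of $F_X$ and to restrict the $\min$ to $\mathbf{s}(C_X)$, while noting that the empty $I_\sigma$'s (for $\sigma \notin \mathbf{s}(C_X)$) contribute nothing to the mergegram by definition.
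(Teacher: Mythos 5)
Your proposal is correct and follows exactly the route the paper intends (its own proof is just the one-line remark that the claim is ``straightforward by definition of $F_X$, $I_\sigma$ and $s(C_X)$''): you unpack $\sigma\in C_X(t)$ as ``$F_X(\sigma)\leq t$ and $F_X(\tau)>t$ for all $\tau\supsetneq\sigma$'' via the identification of $C_X(t)$ with the maximal faces of $\{\tau\mid F_X(\tau)\leq t\}$, read off the endpoints of $I_\sigma$, and saturate a minimizing superset to a maximal face to get the $\min$ identity. The saturation step is the only point requiring an actual argument, and your finite-chain construction handles it correctly.
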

  \begin{proof}
The proof is straightforward by definition of $\F _X$, $I_\sigma$ and $\mathbf{s}(\caF _X)$.
\end{proof}
We obtain the following useful corollary.
\begin{corollary}
\label{cor:S-cor}
Let $\caF _X$ be a facegram over $X$.
For any collection $S$ of simplices such that $\mathbf{s}(\caF _X)\subset S\subset \mathbf{pow}(X)$, we have:
$$\mathbf{mgm}(\caF _X)=\left\{\left\{\left [\F _X(\sigma),\min_{\sigma\subsetneqq\tau\in S} \F _X(\tau) \right)\mid \sigma\in S\right\}\right\}.$$
\end{corollary}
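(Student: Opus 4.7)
The plan is to derive the corollary directly from Prop.~\ref{prop:mgm-formula}, which already describes the mergegram as the multiset of intervals indexed by $\sigma \in s(C_X)$. Writing $S = s(C_X) \cup (S \setminus s(C_X))$, I would show two things: first, that for each $\sigma \in s(C_X)$ the interval prescribed by the corollary on the right-hand side is exactly $I_\sigma$, so that these indices reproduce the mergegram of Prop.~\ref{prop:mgm-formula}; and second, that for each $\sigma \in S \setminus s(C_X)$ the interval prescribed by the corollary is empty, and hence contributes nothing to the multiset.

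The first point reduces to a squeeze argument on the minimum that defines the right endpoint. For every nonempty $\sigma \subset X$, the inclusions $s(C_X) \subset S \subset \mathbf{pow}(X)$ give
$$\min_{\sigma \subsetneqq \tau \in s(C_X)} F_X(\tau) \ \geq\ \min_{\sigma \subsetneqq \tau \in S} F_X(\tau) \ \geq\ \min_{\sigma \subsetneqq \tau \in \mathbf{pow}(X)} F_X(\tau),$$
and the final clause of Prop.~\ref{prop:mgm-formula} asserts the equality of the two outer quantities, forcing the middle one to agree with both. Consequently, for every $\sigma \in s(C_X)$ the right-hand side interval is $\bigl[F_X(\sigma), \min_{\sigma \subsetneqq \tau \in s(C_X)} F_X(\tau)\bigr) = I_\sigma$, matching the formula of Prop.~\ref{prop:mgm-formula} term-by-term.

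For the second point, take $\sigma \in S \setminus s(C_X)$. By definition of $s(C_X)$, $\sigma$ never appears as a maximal face of the simplicial complex associated with $C_X$ at any time; in particular at $t = F_X(\sigma)$ some proper superface $\tau \supsetneq \sigma$ must already lie in the complex, so $F_X(\tau) \leq F_X(\sigma)$, and order-preservation of $F_X$ then forces $F_X(\tau) = F_X(\sigma)$ (and one may take $\tau$ maximal, hence in $s(C_X)$). Invoking the squeeze identity from the previous paragraph, $\min_{\sigma \subsetneqq \tau \in S} F_X(\tau) = F_X(\sigma)$, so the associated interval $[F_X(\sigma), F_X(\sigma))$ is empty and drops out of the multiset.

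The main obstacle is essentially notational rather than mathematical: one must agree on the convention, already implicit in the statement of Prop.~\ref{prop:mgm-formula}, that empty intervals are not counted as elements of the mergegram multiset, so that indices in $S \setminus s(C_X)$ can be safely added to the indexing set without altering $\mathbf{mgm}(C_X)$. Once this is made explicit, combining the two cases yields the claimed identity.
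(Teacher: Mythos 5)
Your proof is correct and is exactly the argument the paper intends: the corollary is stated as an immediate consequence of Prop.~\ref{prop:mgm-formula}, and your two steps (the squeeze on the minimum using the last clause of that proposition, plus the observation that indices in $S\setminus s(C_X)$ contribute only empty intervals because a non-maximal $\sigma$ admits a proper superface $\tau$ with $F_X(\tau)=F_X(\sigma)$) are precisely the details being elided. Your explicit remark about the convention that empty intervals are discarded from the multiset is a worthwhile clarification, since the paper leaves it implicit.
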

Because of Prop.~\ref{prop:mgm-formula} which shows the direct expression of the mergegram in terms of the filtration alone, and because of the equivalence between facegrams and filtrations, from now we can also talk about the mergegram of a filtration. 

For an easier notation later on, we restate the above with the equivalent definition below (formally they differ only by adding empty intervals and so, as multisets, they are the same).  

\begin{definition}
   Let $\F _X:\mathbf{pow}(X)\to\R$ be a filtration over $X$. 
The \emph{mergegram of the filtration $\F _X$} is defined as 
$$\mathbf{mgm}(\F _X):=
\left\{\left\{\left [\F _X(\sigma),\min_{\sigma\subsetneqq\tau\in\mathbf{pow}(X)} \F _X(\tau) \right)\mid
\sigma\in \mathbf{pow}(X)
\right\}\right\}.$$
Similarly, we can define the labeled mergegram of a filtration, and denote it by $\mathbf{mgm}^*(\F _X)$.
\end{definition}

\subsection{Invariant properties of mergegrams}
 Although an isomorphism invariant, the mergegram of a filtration is not invariant up to homotopy of filtrations (see Rem.~\ref{rem:facts about max faces invariances}). However, it turns out that we can consider a particular form of homotopy equivalence of filtrations called \emph{weak equivalence}, (see \textit{Defn.~\ref{dfn:weak equiv}}) under which the mergegram remains invariant (see Prop.~\ref{thm:invariance of mergegram}).

\begin{definition}[{\cite{memoli2017distance}*{Sec.~4.1}}]
\label{dfn:weak equiv}
    Let $\F _X:\mathbf{pow}(X)\to\R$ be a filtration over $X$ and let $\varphi_X:Z\twoheadrightarrow X$ be a surjection. Then, we define the \emph{pullback filtration $\varphi_X^* \F _X$ induced by $\F _X$ and $\varphi_X$}, given by 
    \begin{align*}
        \varphi_X^* \F _X:\mathbf{pow}(Z)&\to\R\\
        \kappa&\mapsto \F _X(\varphi_X(\kappa))
    \end{align*}
  \end{definition}

\begin{lemma}
\label{lem:max-faces-lemma} Let $(X,\S_X)$ be a simplicial complex, and let $\varphi_X:Z\twoheadrightarrow X$ be a surjection. Let $(Z,\varphi_X^* \S_X)$ be the pullback complex of $(X,\S_X)$ given by 
    $$\varphi_X^* \S_X:=\{\kappa\subset Z\mid \varphi_X(\kappa)\in \S_X\}.$$
Let $\varphi_X(\cdot):\mathbf{pow}(Z)\to\mathbf{pow}(X)$, 
$\sigma \mapsto \varphi_X(\sigma) := \cup _{z \in \sigma}\varphi_X(z)$ 
be the map induced by $\varphi_X$ and taking the union.
Then, the restriction of this map from the set of maximal faces of the complexes $(Z,\varphi_X^* \S_X)$ to the set of maximal faces of the complex $(X,\S_X)$ is a bijection. 
\end{lemma}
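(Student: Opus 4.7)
The plan is to show that the natural map $\sigma \mapsto \varphi_X^{-1}(\sigma)$ is a two-sided inverse of the induced map from maximal faces of $(Z,\varphi_X^* S_X)$ to maximal faces of $(X,S_X)$. The key observation driving everything is that every maximal face of $\varphi_X^* S_X$ must actually be a full preimage of the form $\varphi_X^{-1}(\sigma)$ for some $\sigma \in S_X$, after which all three claims (well-definedness, injectivity, surjectivity) become quick verifications.

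First I would record two structural facts. \emph{(i)} $\varphi_X^* S_X$ is indeed a simplicial complex on $Z$: if $\kappa' \subseteq \kappa$ and $\varphi_X(\kappa) \in S_X$, then $\varphi_X(\kappa') \subseteq \varphi_X(\kappa)$, so $\varphi_X(\kappa') \in S_X$ since $S_X$ is downward closed. \emph{(ii)} For any $\sigma \subseteq X$, the preimage $\varphi_X^{-1}(\sigma)$ satisfies $\varphi_X(\varphi_X^{-1}(\sigma)) = \sigma$, because $\varphi_X$ is surjective. In particular, $\varphi_X^{-1}(\sigma) \in \varphi_X^* S_X$ whenever $\sigma \in S_X$.

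Next I would prove the main lemma: \emph{if $\kappa$ is a maximal face of $\varphi_X^* S_X$, then $\kappa = \varphi_X^{-1}(\sigma)$ where $\sigma := \varphi_X(\kappa)$, and moreover $\sigma$ is a maximal face of $S_X$.} Indeed, by (ii), $\varphi_X^{-1}(\sigma) \in \varphi_X^* S_X$, and clearly $\kappa \subseteq \varphi_X^{-1}(\sigma)$; maximality of $\kappa$ forces equality. For maximality of $\sigma$ in $S_X$: if $\sigma \subsetneq \sigma'$ with $\sigma' \in S_X$, then $\varphi_X^{-1}(\sigma) \subsetneq \varphi_X^{-1}(\sigma') \in \varphi_X^* S_X$ (strict because $\varphi_X$ is surjective onto $X \supseteq \sigma' \setminus \sigma$), contradicting maximality of $\kappa = \varphi_X^{-1}(\sigma)$.

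With this, the three remaining points fall out immediately. \textbf{Well-defined:} just shown. \textbf{Injective:} if $\kappa_1,\kappa_2$ are maximal faces with $\varphi_X(\kappa_1) = \varphi_X(\kappa_2) = \sigma$, then both equal $\varphi_X^{-1}(\sigma)$ by the above, hence $\kappa_1 = \kappa_2$. \textbf{Surjective:} given a maximal face $\sigma$ of $S_X$, let $\kappa := \varphi_X^{-1}(\sigma) \in \varphi_X^* S_X$; I claim $\kappa$ is maximal. If $\kappa \subsetneq \kappa' \in \varphi_X^* S_X$, then $\varphi_X(\kappa') \supseteq \sigma$ and $\varphi_X(\kappa') \in S_X$, so by maximality of $\sigma$ we get $\varphi_X(\kappa') = \sigma$, whence $\kappa' \subseteq \varphi_X^{-1}(\sigma) = \kappa$, a contradiction. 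Since $\varphi_X(\kappa) = \sigma$, surjectivity holds. I do not expect any serious obstacle: the only subtle point is the strictness $\varphi_X^{-1}(\sigma) \subsetneq \varphi_X^{-1}(\sigma')$ when $\sigma \subsetneq \sigma'$, which uses surjectivity of $\varphi_X$ onto $X$, and I would state this explicitly.
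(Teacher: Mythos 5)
Your proof is correct, but it takes a different route from the paper's. The paper proves injectivity by a union trick: given two maximal faces $\kappa_1,\kappa_2$ of $\varphi_X^*S_X$ with $\varphi_X(\kappa_1)=\varphi_X(\kappa_2)=\sigma$, it observes that $\kappa_3:=\kappa_1\cup\kappa_2$ satisfies $\varphi_X(\kappa_3)=\sigma\in S_X$, hence $\kappa_3\in\varphi_X^*S_X$, and maximality forces $\kappa_1=\kappa_2=\kappa_3$; well-definedness and surjectivity are asserted as clear from the definitions. You instead prove the stronger structural statement that every maximal face of $\varphi_X^*S_X$ is the full preimage $\varphi_X^{-1}(\sigma)$ of a maximal face $\sigma$ of $S_X$, and that these preimages are exactly the maximal faces, exhibiting $\sigma\mapsto\varphi_X^{-1}(\sigma)$ as an explicit two-sided inverse. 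Your approach is slightly longer but buys more: it gives a concrete description of the maximal faces of the pullback complex, and it supplies full arguments for the well-definedness (maximal faces map to maximal faces) and surjectivity claims that the paper leaves implicit. The one delicate point, the strictness of $\varphi_X^{-1}(\sigma)\subsetneq\varphi_X^{-1}(\sigma')$ for $\sigma\subsetneq\sigma'$, you correctly flag as relying on surjectivity of $\varphi_X$. Both arguments are sound; the paper's union trick is the more economical way to get injectivity alone.
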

\begin{proof}
    Consider the restriction map of $\varphi_X(\cdot):\mathbf{pow}(Z)\to\mathbf{pow}(X)$ that maps maximal faces of $\varphi^*_X \S_X$ to maximal faces of $\S_X$. 
    This map is well-defined because of the surjectivity of $\varphi_X$. 
    By definition, it follows that the pre-image of any maximal face of $\S_X$ is a maximal face of the pullback complex, and that the map is clearly surjective.\\
    Now, we claim that this restriction map of $\varphi_X(\cdot)$ is also injective. Let $\kappa_1,\kappa_2$ be two maximal faces $\varphi_X^* \S_X$ such that $\varphi(\kappa_1)=\varphi_X(\kappa_2)=:\sigma\in \S_X$. 
    Consider the union $\kappa_3:=\kappa_1\cup \kappa_2$. 
    Then, $\varphi_X(\kappa_3)=\varphi_X(\kappa_1\cup \kappa_2)=\varphi_X(\kappa_1)\cup\varphi_X(\kappa_2)=\sigma\in \S_X$. Hence, $\kappa_3\in \varphi_X^* \S_X$. 
    Because of $\kappa_1,\kappa_2\subset \kappa_3$ and because $\kappa_1,\kappa_2$ are maximal faces of the complex $\varphi_X^* \S_X$, we conclude that $\kappa_1=\kappa_2=\kappa_3$. Therefore, our surjective map is also injective, and thus a bijection.
    \end{proof}
\begin{remark}
    Note that although the pullback complex $\varphi^*_X \S_X$ is different from the complex $\S_X$, using Quillen's theorem A in the simplicial category \cite{quillen2006higher}, it was shown by M\'emoli that these complexes are homotopy equivalent \cite{memoli2017distance}*{Cor.~2.1}.
    
\end{remark}

\noindent
Recall Def.~\ref{defn:weakeq} for weak equivalences of filtrations. Then we have the following.

\begin{theorem}
    \label{thm:invariance of mergegram}
    The mergegram is invariant of weak equivalences of filtrations. 
\end{theorem}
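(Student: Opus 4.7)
The plan is to reduce the theorem to the following \emph{single-surjection claim}: for every surjection $\varphi: Z \twoheadrightarrow X$, we have $\mathbf{mgm}(\varphi^* F_X) = \mathbf{mgm}(F_X)$. Granting this, any weak equivalence $X \xtwoheadleftarrow{\varphi_X} Z \xtwoheadrightarrow{\varphi_Y} Y$ gives
$\mathbf{mgm}(F_X) = \mathbf{mgm}(\varphi_X^* F_X) = \mathbf{mgm}(\varphi_Y^* F_Y) = \mathbf{mgm}(F_Y),$
where the middle equality is just the defining condition $\varphi_X^* F_X = \varphi_Y^* F_Y$ of a weak equivalence.

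To prove the single-surjection claim, let $C_X$ and $\tilde{C}_Z$ denote the facegrams associated to $F_X$ and $\varphi^* F_X$ via Prop.~\ref{thm:equivalencetwo}. For every threshold $t \in \R$, the sublevel complex of $\varphi^* F_X$ at level $t$ coincides with the pullback complex $\varphi^* S_X(t)$ of the sublevel complex $S_X(t)$ of $F_X$. Applying Lemma~\ref{lem:max-faces-lemma} pointwise yields, for every $t$, a bijection $\tilde{C}_Z(t) \to C_X(t)$ induced by $\kappa \mapsto \varphi(\kappa)$. The key observation is that inspecting the proof of that lemma reveals this bijection to have a time-independent canonical form: the unique maximal preimage of a maximal face $\sigma \in C_X(t)$ is always the full fiber $\varphi^{-1}(\sigma)$. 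Indeed, $\varphi^{-1}(\sigma) \in \varphi^* S_X(t)$ by surjectivity of $\varphi$; it is a maximal element there because any strict superset $\varphi^{-1}(\sigma) \subsetneq \kappa'$ would satisfy $\varphi(\kappa') \supsetneq \sigma$, contradicting maximality of $\sigma$ in $S_X(t)$; and any other maximal $\kappa$ with $\varphi(\kappa) = \sigma$ is contained in $\varphi^{-1}(\sigma)$, hence equal to it by maximality.

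Because the bijection $\sigma \mapsto \varphi^{-1}(\sigma)$ between $\mathbf{s}(C_X)$ and $\mathbf{s}(\tilde{C}_Z)$ does not depend on $t$, the equivalence $\sigma \in C_X(t) \Leftrightarrow \varphi^{-1}(\sigma) \in \tilde{C}_Z(t)$ yields the equality of lifespans $I_\sigma = I_{\varphi^{-1}(\sigma)}$ for every $\sigma \in \mathbf{s}(C_X)$. Taking the multiset over all such $\sigma$ then gives $\mathbf{mgm}(F_X) = \mathbf{mgm}(\varphi^* F_X)$, completing the proof. The only delicate step, and the single place where some care is needed, is the extraction from Lemma~\ref{lem:max-faces-lemma} of the canonicity of the bijection (i.e.~that it is the $t$-independent map $\sigma \mapsto \varphi^{-1}(\sigma)$); without this canonicity, one could in principle obtain a different matching of maximal faces at each time $t$, which would not automatically align the lifespans across times.
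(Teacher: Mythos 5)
Your proof is correct and follows essentially the same route as the paper: reduce to the single-surjection statement $\mathbf{mgm}(\varphi^*F_X)=\mathbf{mgm}(F_X)$ and invoke the bijection between maximal faces of a pullback complex and maximal faces of the original complex (Lemma~\ref{lem:max-faces-lemma}) to match lifespans. Your explicit identification of that bijection as the time-independent map $\sigma\mapsto\varphi^{-1}(\sigma)$ is a welcome sharpening of a step the paper's proof leaves implicit when it asserts that $I_\kappa\mapsto I_{\varphi_X(\kappa)}$ is a multiplicity-preserving bijection.
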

\begin{proof}
Let $\F _X$ and $\F _Y$ be filtrations over the vertex sets $X$ and $Y$, respectively.
Let $\caF _X, \caF _Y,\varphi_X^* \caF _X$ be the associated facegrams of $\F _X,\F _Y,\varphi_X^* \caF _X$, respectively.
Suppose that $\F _X$ and $\F _Y$ are weakly equivalent. 
That is, there is a tripod $X\xtwoheadleftarrow{\varphi_X} Z\xtwoheadrightarrow{\varphi_Y}Y$, such that 
\begin{equation}
\label{eqn:pullback}
    \varphi_X^* \F _X=\varphi_Y^* \F _Y.
\end{equation}
We claim that $\mathbf{mgm}(\F _X)=\mathbf{mgm}(\F _Y)$.
Because of Eqn.~(\ref{eqn:pullback}), it suffices to show the equality $\mathbf{mgm}(\F _X)=\mathbf{mgm}(\varphi^* \F _X)$. 
By surjectivity of $\varphi_X$, it follows that those mergegrams can only be different on the multiplicities of some of their intervals. 
Now, we need to craft a bijection between those multisets, that preserves their multiplicities.
We consider the map 
\begin{align*}
    \overline{\varphi}_X:\mathbf{mgm}(\varphi_X^* \F _X)&\mapsto\mathbf{mgm}(\F _X) \\
    I_\kappa&\mapsto I_{\varphi_X(\kappa)}
\end{align*}
This map is surjective counting multiplicities, since $\varphi_X$ is surjective.
By \textit{Lem.~\ref{lem:max-faces-lemma}}, the map of maximal faces, $k\mapsto \varphi_X(k)$, is bijective. Hence, the map $I_\kappa\mapsto I_{\varphi_{X}(\kappa)}$ is a bijection which preserves multiplicities.
Thus, the mergegrams are equal.

\end{proof}
\begin{remark}
\label{rem:facts about max faces invariances}
    It follows by \textit{Lem.~\ref{lem:max-faces-lemma}} that the number of maximal faces of a simplicial complex is invariant of pullbacks. However, the number of maximal faces of a complex is not invariant up to homotopy equivalence of complexes: to see this, consider two complexes $\S_X$ and $S_Y$, where $\S_X$ is a triangle and $S_Y$ is the complex obtained by gluing two triangles along a common edge. These complexes are clearly homotopy equivalent, however, they have a different number of maximal faces ($1$ and $2$ respectively). This implies that the mergegram, although invariant of weak equivalences, is not invariant up to homotopy equivalence of filtrations. 
\end{remark}
\begin{corollary}
    Let $(X,N_X),(Y,N_Y)$ be two phylogenetic networks (e.g.~finite metric spaces). Let $\mathbf{VR}_{X}:\mathbf{pow}(X)\to\R$ and $\mathbf{VR}_{Y}:\mathbf{pow}(Y)\to\R$ be the associated Vietoris-Rips filtrations. If there exists an isometry $\varphi:X\to Y$ between those networks, then
    $\mathbf{mgm}(\mathbf{VR}_{X})=\mathbf{mgm}(\mathbf{VR}_{Y})$.
\end{corollary}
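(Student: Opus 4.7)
The plan is to reduce this Corollary directly to Thm.~\ref{thm:invariance of mergegram} (invariance of the mergegram under weak equivalences of filtrations). The key observation is that an isometry between phylogenetic networks, i.e.~a bijection $\varphi:X\to Y$ satisfying $N_X(x,x')=N_Y(\varphi(x),\varphi(x'))$ for all $x,x'\in X$, already furnishes, almost for free, the data of a tripod exhibiting $\mathbf{VR}_X$ and $\mathbf{VR}_Y$ as weakly equivalent filtrations.

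Concretely, I would first set $Z:=X$ and consider the tripod
\[
X \xtwoheadleftarrow{\mathrm{id}_X} X \xtwoheadrightarrow{\varphi} Y,
\]
where $\mathrm{id}_X$ is trivially surjective and $\varphi$ is surjective because it is a bijection. Then I would verify that this tripod is a weak equivalence, i.e.~that the pullback filtrations agree:
$\mathrm{id}_X^*\mathbf{VR}_X \;=\; \varphi^*\mathbf{VR}_Y$. For any $\emptyset\neq\sigma\subset X$, by Defn.~\ref{dfn:diameter}
\[
\mathrm{id}_X^*\mathbf{VR}_X(\sigma)=\mathbf{VR}_X(\sigma)=\max_{x,x'\in\sigma}N_X(x,x'),
\]
while
\[
\varphi^*\mathbf{VR}_Y(\sigma)=\mathbf{VR}_Y(\varphi(\sigma))=\max_{x,x'\in\sigma}N_Y(\varphi(x),\varphi(x')).
\]
The isometry hypothesis then makes the two right-hand sides equal entry-wise, so the two pullback filtrations coincide on every $\sigma\in\mathbf{pow}(X)$.

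With the weak equivalence established, I would invoke Thm.~\ref{thm:invariance of mergegram} to conclude $\mathbf{mgm}(\mathbf{VR}_X)=\mathbf{mgm}(\mathbf{VR}_Y)$. There is no serious obstacle here: the entire content of the proof is recognizing that an isometry is a particularly trivial kind of tripod (with $Z=X$ and one leg the identity), and then chasing the Vietoris--Rips definition through the pullback to verify the weak-equivalence condition. The heavy lifting has already been done in Lem.~\ref{lem:max-faces-lemma} and Thm.~\ref{thm:invariance of mergegram}; this corollary is simply the specialization to the Vietoris--Rips case that was advertised in Rem.~\ref{rem:GH realization remark}.
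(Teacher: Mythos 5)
Your proposal is correct and follows exactly the route the paper intends: the corollary is stated immediately after Thm.~\ref{thm:invariance of mergegram} precisely because an isometry $\varphi$ yields the tripod $X \xtwoheadleftarrow{\mathrm{id}_X} X \xtwoheadrightarrow{\varphi} Y$ along which the pullback Vietoris--Rips filtrations coincide, so the theorem applies directly. Your verification that $\mathrm{id}_X^*\mathbf{VR}_X=\varphi^*\mathbf{VR}_Y$ via the entry-wise identity $N_X(x,x')=N_Y(\varphi(x),\varphi(x'))$ is the whole content of the (omitted) proof.
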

\subsection{Stability of the mergegram}
 We define the bottleneck distance for comparing mergegrams and an $\ell^\infty$-type of metric for comparing labeled mergegrams. 
Then, we show that the mergegram invariant of filtrations is $1$-Lipschitz stable in the perturbations of the tripod distance on filtrations.

Since mergegrams are multisets of intervals, they can be thought of as persistence diagrams. Thus, we can consider the bottleneck distance as a metric for the comparison of mergegrams. 
\begin{definition}[Bottleneck distance]
Let $A,B$ be a pair of multisets of intervals. Let $\e\geq0$. An \emph{$\e$-matching} of $A,B$ is a bijection $\varphi:A'\to B'$ between a subset $A'\subset A$ and $B'\subset B$ such that
\begin{enumerate}
    \item for every $I=[a,b]\in A'$ and $\varphi(I)=[c,d]\in B'$, $||(a,b)-(c,d)||_\infty\leq \e\text{, and }$ 
    \item for every $[a,b]\in (A\setminus A')\cup (B\setminus B')$, $|a-b|\leq2\e$. 
 \end{enumerate}
 The \emph{bottleneck distance of $A,B$} is defined as 
$$d_{\mathrm{B}}(A,B):=\inf\{\e\geq0\mid \text{there exists an }\e\text{-matching of }A,B\}.$$ 
\end{definition}
We consider an $\ell^\infty$-type of metric for comparing labeled mergegrams over $X$.
\begin{definition}
Let $\caF _X,\caF '_X$ be facegrams over $X$, respectively. Their $\ell^\infty$-distance is
$$d_{\infty}(\mathbf{mgm}^*(\caF _X),\mathbf{mgm}^*(\caF '_X)):=\max_{\sigma\in\mathbf{pow}(X)}d_{\mathrm{B}}(I_\sigma^{\caF _X},I_\sigma^{\caF '_X}).$$
\end{definition}
We can also consider a tripod-like distance for comparing labeled mergegrams over different sets, however, we are not intending to use such a distance in this paper, so we omit such a definition.

\begin{proposition}
\label{prop:stability-mgm}
Let $X$ be a taxa set. Let $\F _X,\F '_X$ be two filtered simplicial complexes over $X$ and let $\caF _X$ and $\caF '_X$ be their associated facegrams. 
Then, we have the following bounds
\begin{align*}
d_{\mathrm{B}}(\mathbf{mgm}(\F _X),\mathbf{mgm}(\F '_X))
&\leq  d_\infty(\mathbf{mgm}^*(\caF _X),\mathbf{mgm}^*(\caF '_X)) \\
&\leq d_{\mathrm{I}}(\caF _X, \caF '_X)=d_{\mathrm{I}}(\F _X,\F '_X).
\end{align*}
\end{proposition}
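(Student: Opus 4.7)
The final equality $d_{\mathrm{I}}(C_X,C'_X)=d_{\mathrm{I}}(F_X,F'_X)$ is exactly Prop.~\ref{prop:facegram-filtration-equiv}, so the task reduces to establishing the first two inequalities. Throughout, I will unpack intervals using the explicit formula from Prop.~\ref{prop:mgm-formula}, namely $I_\sigma^{C_X}=[F_X(\sigma),\,\min_{\sigma\subsetneq\tau}F_X(\tau))$ when non-empty, and likewise for $C'_X$; and I will use that $d_{\mathrm{I}}(F_X,F'_X)=\max_\sigma|F_X(\sigma)-F'_X(\sigma)|$ by definition.

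For the first inequality $d_{\mathrm{B}}(\mathbf{mgm}(F_X),\mathbf{mgm}(F'_X))\leq d_\infty(\mathbf{mgm}^*(C_X),\mathbf{mgm}^*(C'_X))$, the plan is to exploit that the labeled mergegram tags each bar of the mergegram with its generating simplex. Thus $\mathbf{mgm}(F_X)$ decomposes as a multiset disjoint union, over $\emptyset\neq\sigma\subset X$, of the (zero- or one-element) multisets $\{I_\sigma^{C_X}\}$, and similarly for $\mathbf{mgm}(F'_X)$. If $d_\infty(\mathbf{mgm}^*(C_X),\mathbf{mgm}^*(C'_X))\leq\e$, then for each $\sigma$ there is an $\e$-matching of $\{I_\sigma^{C_X}\}$ and $\{I_\sigma^{C'_X}\}$ by definition of $d_\infty$. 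Concatenating these per-simplex matchings produces an $\e$-matching between the full mergegrams, and taking infima closes the inequality.

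For the middle inequality, I will assume $d_{\mathrm{I}}(C_X,C'_X)\leq\e$, equivalently $|F_X(\sigma)-F'_X(\sigma)|\leq\e$ for every $\emptyset\neq\sigma\subset X$, and bound $d_{\mathrm{B}}(I_\sigma^{C_X},I_\sigma^{C'_X})$ by $\e$ simplex-by-simplex. I split into cases on whether the two lifespans are empty. In the case where both are non-empty, writing the endpoints via Prop.~\ref{prop:mgm-formula} shows the left endpoints differ by $|F_X(\sigma)-F'_X(\sigma)|\leq\e$, while for the right endpoints I will use the elementary fact that the minima of two pointwise $\e$-close functions over the same index set differ by at most $\e$; hence the sup-norm distance between the intervals is at most $\e$. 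In the mixed case where, say, $I_\sigma^{C_X}=\emptyset$ and $I_\sigma^{C'_X}=[a',b')$, I must show $b'-a'\leq 2\e$: emptiness of $I_\sigma^{C_X}$ unwinds to $F_X(\sigma)\geq \min_{\sigma\subsetneq\tau}F_X(\tau)$, and combining this with the pointwise $\e$-closeness of $F_X$ and $F'_X$ yields $b'-a'\leq(\min_\tau F_X(\tau)+\e)-(F_X(\sigma)-\e)\leq 2\e$, which is precisely the cost of matching $[a',b')$ to the diagonal in the bottleneck distance. The case of both empty is trivial, and the maximum over $\sigma$ yields the bound.

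The delicate step is the mixed case of the middle inequality: one must simultaneously use the structural hypothesis (emptiness of $I_\sigma^{C_X}$ forces a strict ordering between $F_X(\sigma)$ and the minimum over proper supersets) and the metric hypothesis ($\e$-closeness of $F_X,F'_X$) to sandwich the length of $I_\sigma^{C'_X}$. Everything else is bookkeeping of endpoints and routine inequality chasing.
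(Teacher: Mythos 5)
Your proposal is correct and follows essentially the same route as the paper: the equality via the facegram--filtration isometry, the middle bound by comparing corresponding labeled intervals $I_\sigma$ simplex-by-simplex, and the first bound by concatenating the per-simplex $\e$-matchings supplied by the $\ell^\infty$ structure of $d_\infty$. Your case analysis for the middle inequality (explicit endpoints from Prop.~\ref{prop:mgm-formula}, including the mixed empty/non-empty case yielding the $2\e$ diagonal condition) is in fact more careful than the paper's rather terse assertion at that step, but it is a filling-in of the same argument rather than a different one.
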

\begin{proof}
 The equality follows directly from the fact that the lattice of face-sets is isomorphic to that of simplicial complexes and applying the property of the interleaving distance that the associated persistent structures are interleaving isometric (see Bubenik et al.~\cite{bubenik2015metrics}*{Sec.~2.3}). 
Assume that $d_I(\caF _X,\caF '_X)\leq\e$. That means that $\caF _X(t)\leq \caF '_X(t+\e)$ and $\caF '_X(t)\leq \caF _X(t+\e)$. This in turn implies that each of the lifespans of the intervals in the labeled mergegram is of length at most $\e$. This explains the bound with the labeled mergegram. Since the $d_\infty$ distance of labeled mergegrams is an $\ell^\infty$-type of metric, this bound also implies that there is a canonical $\e$-matching $I_\sigma^{(\caF _X)}\mapsto I_\sigma^{(\caF '_X)}$ between the intervals of the ordinary mergegrams $\mathbf{mgm}(\caF _X)$ and $\mathbf{mgm}(\caF '_X)$.
So, the bottleneck distance of the ordinary mergegrams is at most $\e$ too. 
\end{proof}

Next, we establish the stability of the mergegram by following the same strategy as in \cite{memoli2017distance} and other works of M\'emoli et al., e.g.~\cite{kim2023interleaving}. More precisely, now that we have established the invariance of mergegrams under weak equivalences of filtrations (Thm.~\ref{thm:invariance of mergegram}), we show the stability of the mergegram invariant by invoking stability for the invariant when the underlying sets are the same.

\begin{proposition}[$1$-Lipschitz stability of the mergegram]
\label{prop:hausd-stability-mgm}
Let $X$ and $Y$ be two finite sets.
\begin{enumerate}
\item Let $\F _X,\F _Y$ be a pair of filtrations over $X$ and $Y$, respectively. Then
$$d_{\mathrm{B}}(\mathbf{mgm}(\F _X),\mathbf{mgm}(\F _Y))\leq d_{\mathrm{T}}(\F _X,\F _Y).$$
    \item Let $(X,N_X),(Y,N_Y)$ be a pair of phylogenetic networks (e.g.~finite metric spaces) and let $\mathbf{VR}_{X},\mathbf{VR}_{Y}$ be their Vietoris-Rips filtrations. 
    Then
    $$d_{\mathrm{B}}(\mathbf{mgm}(\mathbf{VR}_{X}),\mathbf{mgm}(\mathbf{VR}_{Y}))\leq d_{\mathrm{GH}}((X,N_X),(Y,N_Y)).$$
\end{enumerate}
\end{proposition}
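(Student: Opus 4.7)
The plan is to prove both bullets in tandem using the standard Mémoli-style tripod argument: reduce the stability claim for filtrations over different vertex sets to the already-established stability for filtrations over a common vertex set, by passing through the intermediate set $Z$ of a tripod and using the invariance of the mergegram under weak equivalences (Thm.~\ref{thm:invariance of mergegram}).

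For the first bullet, I would fix an arbitrary tripod $X \xtwoheadleftarrow{\varphi_X} Z \xtwoheadrightarrow{\varphi_Y} Y$ and consider the two pullback filtrations $\varphi_X^* F_X$ and $\varphi_Y^* F_Y$, both of which live over the common vertex set $Z$. The first key observation is that $F_X$ is weakly equivalent to $\varphi_X^* F_X$: indeed, the tripod $X \xtwoheadleftarrow{\varphi_X} Z \xtwoheadrightarrow{\mathrm{id}_Z} Z$ satisfies $\varphi_X^* F_X = \mathrm{id}_Z^*(\varphi_X^* F_X)$ trivially, which is precisely the condition in Defn.~\ref{dfn:weak equiv} for being a weak equivalence. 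Hence Thm.~\ref{thm:invariance of mergegram} yields $\mathbf{mgm}(F_X) = \mathbf{mgm}(\varphi_X^* F_X)$, and symmetrically $\mathbf{mgm}(F_Y) = \mathbf{mgm}(\varphi_Y^* F_Y)$.

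Next, since $\varphi_X^* F_X$ and $\varphi_Y^* F_Y$ are filtrations over the same set $Z$, Prop.~\ref{prop:stability-mgm} (the same-vertex-set stability bound already proved) gives
$$d_{\mathrm{B}}\bigl(\mathbf{mgm}(\varphi_X^* F_X),\mathbf{mgm}(\varphi_Y^* F_Y)\bigr) \;\leq\; d_{\mathrm{I}}(\varphi_X^* F_X,\varphi_Y^* F_Y).$$
Combining this with the two invariance equalities above yields
$$d_{\mathrm{B}}\bigl(\mathbf{mgm}(F_X),\mathbf{mgm}(F_Y)\bigr) \;\leq\; d_{\mathrm{I}}(\varphi_X^* F_X,\varphi_Y^* F_Y).$$
The left-hand side does not depend on the chosen tripod, so taking the infimum over all tripods $X \xtwoheadleftarrow{\varphi_X} Z \xtwoheadrightarrow{\varphi_Y} Y$ on the right-hand side gives the desired bound $d_{\mathrm{B}}(\mathbf{mgm}(F_X),\mathbf{mgm}(F_Y)) \leq d_{\mathrm{T}}(F_X,F_Y)$.

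The second bullet is then immediate by specializing the first bullet to $F_X = \mathbf{VR}_X$ and $F_Y = \mathbf{VR}_Y$ and invoking Rem.~\ref{rem:GH realization remark}, which identifies $d_{\mathrm{T}}(\mathbf{VR}_X,\mathbf{VR}_Y) = d_{\mathrm{GH}}((X,N_X),(Y,N_Y))$. The only conceptual obstacle in the argument is the step in which one uses weak equivalence invariance to replace $F_X$ by $\varphi_X^* F_X$ (and similarly for $Y$); this is precisely where Thm.~\ref{thm:invariance of mergegram} is essential, because under mere homotopy equivalence of the underlying filtrations the number and lifespans of maximal faces can change (cf.~Rem.~\ref{rem:facts about max faces invariances}), and so a naive homotopy-invariance argument would fail. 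Once that step is legitimized, the rest of the proof is routine: the inequality passes through the triangle-inequality-style chain, and the tripod infimum produces the tripod distance by definition.
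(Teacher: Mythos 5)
Your proposal is correct and follows essentially the same route as the paper's proof: fix an arbitrary tripod, use the weak-equivalence invariance of the mergegram (Thm.~\ref{thm:invariance of mergegram}) to replace $F_X,F_Y$ by their pullbacks over the common set $Z$, apply the same-vertex-set bound of Prop.~\ref{prop:stability-mgm}, take the infimum over tripods, and deduce the second bullet from Rem.~\ref{rem:GH realization remark}. You merely make explicit (via the tripod $X\xtwoheadleftarrow{\varphi_X}Z\xtwoheadrightarrow{\mathrm{id}}Z$) the justification that the paper leaves implicit in its first equality.
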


\begin{proof}
    Let $X\xtwoheadleftarrow{\varphi_X} Z\xtwoheadrightarrow{\varphi_Y}Y$ be a tripod between $X$ and $Y$. 
    We have
    \begin{align*}
            d_{\mathrm{B}}(\mathbf{mgm}(\F _X),\mathbf{mgm}(\F _Y))&=d_{\mathrm{B}}(\mathbf{mgm}(\varphi_X^* \F _X),\mathbf{mgm}(\varphi_Y^* \F _Y))\\
    &\leq d_{\mathrm{I}}(\varphi_X^* \F _X,\varphi_Y^* \F _Y).
    \end{align*}
    Since the tripod was arbitrary, by taking infimums over all tripods, we obtain
    $$d_{\mathrm{B}}(\mathbf{mgm}(\F _X),\mathbf{mgm}(\F _Y))\leq d_{\mathrm{T}}(\F _X,\F _Y).$$
    The second claim follows directly by Rem.~\ref{rem:GH realization remark}.
\end{proof}

\begin{example}
Consider the three facegrams from Fig.~\ref{fig:instability}.
We can see in the figure below that the mergegrams of those facegrams are at most $\e$-close to each other in the bottleneck distance.
\begin{figure}
\centering
\includegraphics[width=0.95\textwidth]{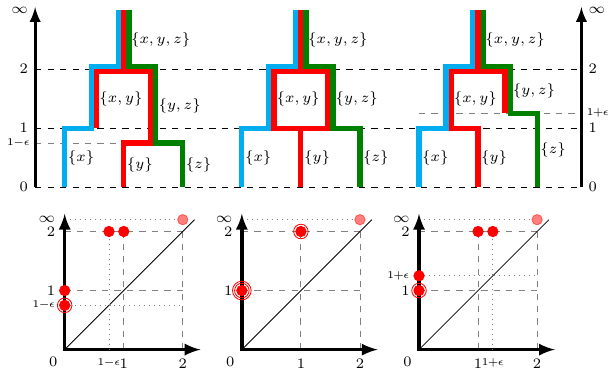}
\caption{{\bf Perturbed facegrams and their mergegrams.} Facegrams and their respective mergegrams. Points with multiplicity $2$ and $3$ are shown by drawing multiple rings around a point.}
\label{fig:stability}
\end{figure}
\end{example}

\section{Computational aspects}
In this section, we investigate (i) the complexity of computing the join-span of a family of treegrams in the lattice of cliquegrams and in the lattice of facegrams, respectively, and the (ii) complexity of computing the mergegram for these cases.
As we describe in the 
Figure \ref{tab:overview_algorithms}, we have two different classes of inputs (the top one which is motivated by phylogenetic reconstruction and the bottom one which is motivated by TDA) for which we want to compute the mergegrams of the respective cliquegram or facegram structures. In the following, we will either describe how to pass along each arrow and what to be careful of or give pseudo-code for the computations. As can be seen, there are 4 different paths to be taken from the inputs to their mergegrams.

\begin{figure}
\centering
\includegraphics[width=0.8\textwidth]{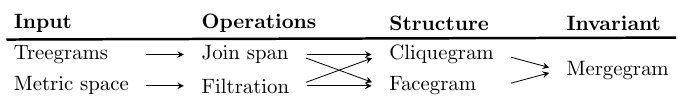}
\caption{Overview over the algorithmic structures. To compute the join of treegrams we can use the join operation of the lattice of cliquegrams or of facegrams and then calculate the respective mergegram invariants. Similarly, we can compute the mergegram for metric spaces
either using cliquegrams or facegrams. The arrows represent the different paths to be taken and represent different algorithms to be used.}
\label{tab:overview_algorithms}
\end{figure}
For both cliquegram and facegram, we provide two (slightly) different algorithms. 
The second algorithms both focus on using the matrix representation to compute birth and death times directly and independently from other steps in the iteration throught the critical values. Thus, those can be more readily adapted in a parallel and somewhat distributed framework of computations in the future. 

In the case of cliquegram we exploit the representation via a phylogenetic network for computational efficiency, i.e. a $n\times n$-matrix for $n$ being the number of common taxa, since there exists a one-to-one correspondence due to \textit{Prop.~\ref{prop:equivalence}}. 
Furthermore, the different values in the matrix correspond to the different values in the critical set of the associated cliquegram. 

For facegrams such a representation via matrices does not exist, and we need to use the face-sets with their respective filtration values. In the case of the join-facegram of a family of treegrams, we can speed-up computation using the fact that the information of maximal faces is completely encoded in the treegrams, see \textit{Alg.~\ref{alg:join_tree_facegram}}. Using the information of the associated ultranetworks of the treegrams, we can compute it differently, as done in \textit{Alg.~\ref{alg:join_tree_facegram_second}}.  
\subsection{The mergegram of a cliquegram}\label{sec:mergegram_cliquegram}

Given a set $\{U^1_X,\ldots,U_X^\ell\}$ of $\ell$ symmetric ultranetworks over a set of taxa $X=\{x_1,\ldots,x_n\}$, the join-cliquegram $\caC _X:=\vee_{i=1}^\ell \caT _X^i$ of their associated treegrams $\caT _X^i:=\varphi(U_X^i)$, $i=1,\ldots,\ell$ can be computed as mentioned in Remark \ref{remark:join_cliquegram}. Hence, computing the join-cliquegram is the same as computing any cliquegram in general: 

\begin{enumerate}
    \item Take the minimizer matrix (entry-wise minimum of the symmetric ultranetworks over all associated treegrams). The minimizer matrix is a symmetric network, more specifically a phylogenetic network $N_X$, which is unique due to \textit{Prop.~\ref{prop:equivalence}}.
    \item Compute the associated cliquegram for the minimizer matrix - this is the join-cliquegram - using Algorithm \ref{alg:cliquegramfromnetwork}. The different values in the minimizer matrix will be the different values of the critical set of the cliquegram.
    \item In general, we can now compute the mergegram of that cliquegram, via Algorithm \ref{alg:mergegram_cliquegram}.
\end{enumerate}
The pseducode for this algorithm is given in \textit{Alg.~\ref{alg:mergegram_cliquegram}}.\par

\begin{algorithm}
\caption{The mergegram of a cliquegram}\label{alg:mergegram_cliquegram}
\begin{algorithmic}[1]
\State \textbf{Input: } Given a cliquegram $\caC _X : \mathbb{R}\to\mathbf{Cliq}(X)$, 
with critical set $\varepsilon_1 < \varepsilon_2 < \ldots <\varepsilon_s$.\\
\phantom{\textbf{Input: }} Let $\varepsilon_0$ be an arbitrary value $<\varepsilon_1$. Note that $\caC (\varepsilon _0) = \emptyset$.

\State \textbf{Initialize} $LM ~\leftarrow ~ \{ \}$

\ForAll{$k \in \{ 1, ..., s \}$}
    \ForAll{$\sigma \in \caC _X(\varepsilon_{k}) ~\backslash ~ \caC _X(\varepsilon_{k-1})$} 
    \Comment{All these cliques appear when $\varepsilon_{k-1}$$\curvearrowright \varepsilon_{k}$.}
        \State $M ~ \leftarrow ~ M \cup \{ (\sigma, \varepsilon_{k}, \infty)\}$
        \Comment{Add newly born clique (it appeared in $\varepsilon _{k}$)}
    \EndFor\\
    \ForAll {$\sigma \in \caC _X(\varepsilon_{k-1}) ~\backslash ~ \caC _X(\varepsilon_{k})$}
    \Comment{All these cliques disappear when $\varepsilon_{k-1}$$\curvearrowright$ $\varepsilon_{k}$.}
    \State Let $(\sigma, \varepsilon, \infty)$ be the existing entry in $LM$ starting with $\sigma$.
        \State $M ~\leftarrow ~ M \setminus \{(\sigma, \varepsilon, \infty)\}$
            \Comment{Remove entry}
        \State $M ~ \leftarrow ~ M \cup \{ (\sigma, \varepsilon, \varepsilon_{k})\}$
            \Comment{Add the entry updated with its death time.}
    \EndFor
\EndFor
\State $LM ~\leftarrow ~ \{ (X, \varepsilon_s, \infty)\}$ \Comment{If we want to include the infinite component}
\State Return $M$  \Comment{Return the labeled mergegram}
\end{algorithmic}
\end{algorithm}

We can also use a more space-efficient algorithm by using the information in the phylogenetic network to say when cliques are born and when they die. Here we mean the Vietoris-Rips filtration of the phylogenetic network. This stems from the fact, that we do not need a list of maximal cliques but can directly compute the contribution to the mergegram for a given critical value $\varepsilon_k$ and the maximal cliques found in the graph $G_{\varepsilon_k}$.
\begin{enumerate}
    \item Get all maximal cliques for a specific value $\varepsilon$ in the critical set.
    \item For these maximal cliques $\sigma$ compute the 
    \begin{enumerate}
        \item maximal values in $N_X(\sigma, \sigma)$, this is the birth-time $b_\sigma$ of $\sigma$. If it's the same as $\varepsilon _i$, $\sigma$ has been born at $\varepsilon_i$, it it's smaller $\sigma$ has been a maximal clique before.
        \item the minimal value of all the column-wise maxima in $N_X(\sigma, X \backslash \sigma)$, this is the death-time $d_\sigma$ of $\sigma$.
    If $b_\sigma = \varepsilon_i$, add $(b_\sigma, d_\sigma)$ to the mergegram.
    \end{enumerate}
\end{enumerate}

\begin{algorithm}
\caption{The mergegram of a phylogenetic network}\label{alg:mergegram_cliquegram_phylonetwork}
\begin{algorithmic}[1]
\State\textbf{Input}: Given a phylogenetic network $N_X$.
\State \textbf{Initialize} $LM ~\leftarrow ~ \{ \}$
\ForAll{$\varepsilon \in \{$unique values in $N_X \}$}
\State $G_{\varepsilon} $ $\leftarrow$ Build the graph $G_{\varepsilon}$ with vertex set $V(G_{\varepsilon}):=X$ \\
    \phantom{ $G_{\varepsilon}$ $\leftarrow$ }and edge-set $E(G_{\varepsilon}):=\left\{(x,x')\in X\times X\mid N_X(x,x')\leq {\varepsilon} \right\}.$ 

\ForAll{$\sigma ~ \in ~\texttt{FindMaxCliques}(G_{\varepsilon})$}
\State $b ~ \leftarrow ~ \max_{i, j \in \sigma} (N_X(i, j))$
\If{$b = {\varepsilon}$} \Comment{Maximal clique appeared at this value}
\State \textbf{}$LM ~\leftarrow ~ LM \cup \{ \left(\sigma, ~b, ~
    \max _{x \in \sigma} \min_{y \in X \backslash \sigma} N_X(x, y) \right)\}$
    \label{alg:mergegram_cliquegram_phylonetwork_birthdeath}
\EndIf
\EndFor
\EndFor
\State $LM ~\leftarrow ~ \{ (X, \varepsilon_s, \infty)\}$ \Comment{If we want to include the infinite component}
\State Return $M$  \Comment{Return the labeled mergegram}
\end{algorithmic}
\end{algorithm}

Now let us consider the complexity of these two algorithms. 
Let $M_1, M_2, \ldots, M_s$ the clique-sets corresponding to each value in the critical set $\varepsilon_1 < \ldots < \varepsilon_s$ of $\caC _X$. By the definition of critical set, the clique-sets are trivial, i.e. $\caC _X(t) = \emptyset$, for $t<\varepsilon_1$ and $\caC _X(t) = X$ for $t\geq \varepsilon_s$.
Given the ultranetwork representations of treegrams as input, the asymptotic complexity of calculating the minimizer matrix is $O(s n^2)$, since it depends linearly on the number $s$ of all different critical points of all the trees (this corresponds to the number of all different values of the entries of all the ultranetworks) and quadratic 
on the number of taxa, $n$ (since the minimizer matrix is the $n\times n$-matrix obtained by taking entry-wise minima over all the entries of all the ultranetworks of the treegrams). 

The calculation of the maximal cliques for each step in the intermediate graph $G_t$ in step $2$ is the costly step in this pipeline: 
In terms of the number of taxa, $n:=|X|$ alone as input parameter the computation is certainly costly: variants of the Bron-Kerbosch algorithms \cite{BronKerbosch73} are still the best general case running cases for finding all maximal cliques in such a graph at the different threshold steps, resulting in a worst-case runtime of $O(s3^{n/3})$ for $s$ being the size of the critical set of the cliquegram and $n$ the number of vertices in the graph. 
To be more precise, $3^{n/3}$ are the most maximal cliques which can occur at one step in the critical set. Graphs of such a structure are called Moon-Moser Graphs \cite{moon_cliques_1965}. The optimal worse-case time complexity of the Bron-Kerbosch algorithm is given by $O(3^{n/3})$. In practice, the number of maximal cliques is often-times observed to be far smaller, see \cites{tomita_maxcliques_2006, conteComplexity2021} for an extended time complexity analysis. 
In general the number of maximal cliques over all values of the critical set is $O(2^n)$ for the cliquegram, see \textit{Prop.~\ref{prop:maximalnumbercliquegram}} and thus the number for maximal faces is also $O(2^n)$ for the facegram. 
As can be seen in \textit{Tab.~\ref{tab:runtimes_long}} for the sizes of the cliquegram and facegram, the cliquegram will have a far larger number of maximal cliques when considering the join-cliquegram opposed to the join-facegram.

In terms of the set of all cliques as a complexity parameter, the computation of the mergegram of the cliquegram is done in $O(s|M|)$ where $|M|$ is the number of maximal cliques over all values of the critical set which is bounded above by $2^{|X|}$, i.e. $M := \bigcup M_i$.
Specifically, we have these bounds for the two inner loops and computing the set-differences in \textit{Alg.~\ref{alg:mergegram_cliquegram}} as 
$$O\left((s-1)2|M| + \sum_{i=1}^{s-1} 2\min (|M_i|, |M_{i+1}|)\right)
= O(s|M| + (s-1)|M|)
= O(s|M|).$$

Opposed to this, \textit{Alg.~\ref{alg:mergegram_cliquegram_phylonetwork}} has in addition to the runtime of the \texttt{FindMaxCliques} part, the look-up of the maximal values in the $|\sigma|^2$-matrix as well as the maximal-minimal computations if $\sigma$ is a new maximal clique

\par\medskip
\textbf{Space complexity}
For both algorithms we need to compute the maximal cliques of a graph on $n=|X|$ vertices. The Bron-Kerbosch algorithm uses a recursive call stack with $O(n)$ and the set of (current clique, potential nodes to add, nodes already processed) each recursion call, which can be implemented with a space complexity of $O(n)$ each time, resulting in a space usage of $O(n)$. Depending on the graph representation via adjacency matrix or adjacency list we have a space complexity of $O(n^2)$ or $O(n+m)$ (for $m$ being the number of edges in $G$). For simplification, we always use the adjacency matrix, hence $O(n^2)$ is the space complexity of the maximal clique finding algorithm for one specific $G_t$ and time $t$. 
In addition, the algorithm uses $O(|\caC_X(\varepsilon _i)|)$ space for the list of maximal cliques.

Suppose we compute all maximal cliques for each value in the critical set and save them in the sets $\caC_X(\varepsilon _i)$ as in \textit{Alg.~\ref{alg:mergegram_cliquegram}}. 
While the checks to see what maximal cliques are born or die are quick then, there is a large space complexity of $O(3*\sum_{i=1}^s\caC_X(\varepsilon _i))= O(s\caC_{X, max})$ for the list of all maximal cliques as well as $O(3*\sum_{i=1}^s M_i)) = O(sM_{max})$ for the labeled mergegram itself. Hence, the total space complexity is 
\[
O(n^2 + s\caC_{X, max} + sM_{max}) = O(n^2 + s\caC_{X, max})
\]

We can decrease this, to $2\caC_{X,max}$, since we only need to keep track of two time instances of the cliquegram. If we do not need keep track of the maximal cliques from before at all during the computation of the mergegram, as done in \textit{Alg.~\ref{alg:mergegram_cliquegram_phylonetwork}}, we can reduce the space complexity of the algorithm to be only $O(2\sum |M_i|)$ for the unlabeled mergegram. 
Furthermore, since we do not need to to save all the maximal cliques at a specific time-step due to the fact that we directly check their birth and possibly death time and only add their birth-death-tuple to the unlabeled mergegram, the total space complexity is given as  
\[
O(n^2 + 2\sum |M_i|) = O(n^2 + s M_{max}).
\]
Asymptotically there is no difference between the above three versions, because of the dominating factor which are all $O(2^n)$ in the worst case, see \textit{Prop.~\ref{prop:maximalnumbercliquegram}}.
In the actual implementation only consecutive steps for maximal cliques are saved, thus decreasing the space complexity.

\begin{remark}[Data representation]
Using optimized data-structures for the representation of the cliquegram the theoretical complexity for computing the cliquegram associated to a distance matrix is the same as for computing the Rips complex for the \emph{Critical Simplex Diagram (CSD)} in \cite{boissonnat2018CSD} where it is stated to be $O(|\sum |M_i||^{2.38})$. 
In general, the CSD is a good candidate for an efficient representation of the clique-/facegram with fast base operations.
\end{remark}

\textbf{Cliquegram of a metric space}
Given a 
metric space
with $n$ points, we can consider the $n\times n$-distance matrix of the distance of each of the points from each other. Since any such distance matrix is also a phylogenetic network (with the diagonals being zero), we can compute the associated cliquegram and its mergegram as we did above, without having to compute the minimizer matrix.

\subsection{The mergegram of a facegram and its complexity}
\label{sec:complexity}
\textbf{Join-facegram: complexity}

We show that the complexity of computing the join-facegram of a set of treegrams and then its associated mergegram invariant is
polynomial in the input parameters.

First, we need the following technical results.
\begin{proposition}
\label{prop:expl-formula for join of face-sets}
Let $\C _X^k$, $k=1,\ldots,\ell$ be a finite collection of face-set over $X$. Let $\C _X:=\bigvee_{k\in \{1,\ldots, \ell\}} \C _X^k$. Then, 
$$\C _X=\left\{\sigma\subset \bigcup_{k\in\{1,\ldots,\ell\}}\C _X^k\mid \sigma\nsubseteq \tau, \text{ for all }\tau \in\left( \bigcup_{k\in\{1,\ldots,\ell\}}\C _X^k\right)\setminus \{\sigma\}\right\}.$$ 
In other words, the join of the $\C _X^k$'s consist of the set of maximal elements in the union $\bigcup_{k\in\{1,\ldots,\ell\}}\C _X^k$.
In particular, $\C _X\subset \bigcup_{k\in\{1,\ldots,\ell\}}\C _X^k$.
\end{proposition}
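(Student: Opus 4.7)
The plan is to work directly from the definition of the join as the least upper bound in the lattice $(\mathbf{Face}(X),\leq)$, taking advantage of the fact that the union $U:=\bigcup_{k=1}^\ell C_X^k$ is a finite collection of subsets of the finite set $X$, so the notion of a $\subset$-maximal element of $U$ is unambiguous. Let $M(U)$ denote the set of $\sigma\in U$ that are not strictly contained in any other element of $U$; this is precisely the right-hand side of the claimed formula. The goal is then to show $M(U)=\bigvee_k C_X^k$.

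First I would check that $M(U)$ is itself a face-set in the sense of Defn.~\ref{dfn:face-set}: if $\sigma,\sigma'\in M(U)$ with $\sigma\subsetneq\sigma'$, then the definition of $M(U)$ excludes $\sigma$, a contradiction, so the elements of $M(U)$ are pairwise incomparable. Next I would verify the two half-inequalities that pin down $M(U)$ as the join. For the upper-bound inequality $C_X^k\leq M(U)$, given $C\in C_X^k\subset U$, finiteness of $U$ lets me pick a $\subset$-maximal $\tau\in U$ containing $C$, and every such $\tau$ lies in $M(U)$ by construction. For the least-upper-bound inequality, let $C_X'$ be any face-set with $C_X^k\leq C_X'$ for every $k$; given $\sigma\in M(U)$, pick $k$ with $\sigma\in C_X^k$, and the hypothesis produces some $\tau\in C_X'$ with $\sigma\subset\tau$, whence $M(U)\leq C_X'$.

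Combining these two inequalities gives $M(U)=\bigvee_k C_X^k$, and the final inclusion $\bigvee_k C_X^k\subset\bigcup_k C_X^k$ is immediate since every element of $M(U)$ lies in $U$ by definition. There is no genuine obstacle here; the only mild subtlety is that $U$ itself is generally \emph{not} a face-set, because faces coming from different $C_X^k$'s may be comparable, so passing to the maximal elements is essential. An alternative, even quicker argument would appeal to Rem.~\ref{rem:joinstwo}: under the bijection between face-sets and simplicial complexes, the join $\bigvee_k C_X^k$ corresponds to the union $\bigcup_k S_X^k$ of the generated simplicial complexes, and a face $\sigma$ is maximal in this union if and only if $\sigma\in C_X^k$ for some $k$ and is not strictly contained in any $\tau\in\bigcup_k C_X^k$. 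I would probably include this one-line reformulation as a concluding sentence to make the geometric content transparent.
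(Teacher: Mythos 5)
Your proof is correct, and its main line of argument is genuinely different from the paper's. The paper disposes of this proposition in one sentence by transporting the problem through the lattice isomorphism $\mathbf{Face}(X)\cong\mathbf{Simp}(X)$ of Prop.~\ref{prop:clut-simp} together with Rem.~\ref{rem:joinstwo}: the join of face-sets corresponds to the union of the generated simplicial complexes, whose maximal faces are exactly the maximal elements of $\bigcup_k C_X^k$. That is precisely the ``alternative, quicker argument'' you mention at the end. Your primary argument instead verifies directly inside the lattice $(\mathbf{Face}(X),\leq)$ that $M(U)$ is a face-set, is an upper bound for each $C_X^k$, and is below every common upper bound, hence equals the join as least upper bound. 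This is more self-contained (it does not lean on Prop.~\ref{prop:clut-simp}, which the paper relegates to the appendix with proof ``omitted'') at the cost of a few extra lines; the paper's route buys brevity by reusing already-established machinery. The only point worth making fully explicit in your upper-bound step is that an element $\tau$ chosen maximal among the members of $U$ containing $C$ is automatically maximal in all of $U$ (any strictly larger $\tau'\in U$ would also contain $C$), so $\tau\in M(U)$; this is immediate from finiteness and does not constitute a gap.
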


\begin{proof}
Because of the lattice isomorphism between simplicial complexes and face-set in \textit{Prop.~\ref{prop:clut-simp}}, the proof follows by recasting the problem in the setting of simplicial complexes; see Rem.~\ref{rem:joinstwo}. Hence, the maximal faces of a union of complexes are computed by the above formula.
\end{proof}
\begin{corollary}
Given a set of $\ell\geq2$ face-sets over $X$ $\{C^1_X,\ldots,C^\ell_X\}$, the join-face-set $\C _X:=\bigvee_{i\in\{1,\ldots,\ell\}} C^i_X$ can be computed in time at most $O\left(\left(\sum_{i=1}^{\ell}|C^i_X|\right)^2\max |C^i_X|\right)$ .
\end{corollary}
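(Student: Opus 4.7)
The plan is to exhibit an explicit algorithm that realizes the formula of Prop.~\ref{prop:expl-formula for join of face-sets}, and then bound its running time in terms of $N:=\sum_{i=1}^{\ell}|C^i_X|$ and $m:=\max_i |C^i_X|$. By that proposition, the join $C_X=\bigvee_i C_X^i$ is exactly the set of $\subset$-maximal elements of the union $U:=\bigcup_i C_X^i$, so the computation reduces to (i) forming $U$ from the input list and (ii) extracting the maximal elements of $U$ with respect to set inclusion.

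First I would construct $U$ by concatenating the input face-sets into a single list of faces; this has length at most $N$. I would then deduplicate, which costs at most $O(N^2)$ subset-equality checks, each of which in turn costs at most $O(m)$ once we agree on a canonical representation of each face (e.g.\ as a sorted list or hash set of vertices whose size is bounded by $m$, since every face in $C_X^i$ is a subset of the underlying vertex set and the representation of any individual face is dominated by the scale of the face-set it came from). After this step the list $U$ still has at most $N$ elements, and every face is stored in a form that supports $O(m)$-time membership queries.

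Next I would extract the maximal elements: for each candidate $\sigma\in U$, iterate over all $\tau\in U\setminus\{\sigma\}$ and test $\sigma\subseteq\tau$; keep $\sigma$ if and only if no strict superset is found. Each inclusion test $\sigma\subseteq\tau$ costs $O(|\sigma|)=O(m)$ because we only need to check, for every vertex of $\sigma$, its membership in the (pre-hashed) representation of $\tau$. There are at most $|U|\leq N$ candidate faces and for each we do at most $N-1$ comparisons, so the maximality sweep costs $O(N^2\cdot m)$. This dominates the deduplication step, yielding the claimed bound $O\bigl(N^2\,m\bigr)=O\bigl((\sum_i|C_X^i|)^2\,\max_i|C_X^i|\bigr)$.

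The only delicate point — and I would expect this to be the main place a careful reader would press — is justifying the $O(m)$ cost per inclusion test uniformly; this requires fixing once and for all a data structure for faces (hash set keyed by vertices, or sorted vector) and paying the preprocessing cost during the construction of $U$, which is absorbed into the deduplication step. Correctness is immediate from Prop.~\ref{prop:expl-formula for join of face-sets}: the output is precisely the set of $\subset$-maximal elements of $U$, which is $C_X$. \qed
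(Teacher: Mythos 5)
Your proof is correct and follows essentially the same route as the paper's: invoke Prop.~\ref{prop:expl-formula for join of face-sets} to reduce the join to extracting the $\subset$-maximal elements of the union, then bound the cost by $N^2$ pairwise inclusion tests at $O(m)$ each. Note that both your argument and the paper's gloss over the fact that the per-comparison cost is really governed by the size of a face (at most $|X|$) rather than by $\max_i|C^i_X|$, which counts faces rather than vertices; this is a shared imprecision, not a gap specific to your write-up.
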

\begin{proof}
Consider $C \in \bigcup_{k\in\{1,\ldots,\ell\}}\C _X^k$. In the construction of $\bigvee_{k\in \{1,\ldots, \ell\}}$ we need to check if $C$ is not contained in another element in $\bigcup_{k\in\{1,\ldots,\ell\}}\C _X^k$. To check this, we need to perform $\sum_{i=1}^{\ell}|C^i_X|$ comparison of $C$ to other elements of the union. The cost of each comparison is bounded by $max |C^i_X|$. Since there are $\sum_{i=1}^{\ell}|C^i_X|$ elements $C$ for which the check need to be done, the computational complexity of the procedure is $O\left(\left(\sum_{i=1}^{\ell}|C^i_X|\right)^2\max |C^i_X|\right)$. 
\end{proof}
\begin{theorem}
\label{thm:complexity}
Given a set $\{U^1_X,\ldots,U_X^\ell\}$ of $\ell$ symmetric ultranetworks over a set of taxa $X=\{x_1,\ldots,x_n\}$, 
(i) the join-facegram $\caF _X:=\vee_{i=1}^\ell \caT _X^i$ of their associated treegrams $\caT _X^i:=\varphi(U_X^i)$, $i=1,\ldots,\ell$, (ii) the labeled mergegram $\mathbf{mgm}^*(\caF _X)$, (iii) the Reeb graph and (iv) the unlabeled mergegram $\mathbf{mgm}(\caC _X)$ of $\caC _X$, can all be computed in time at most $O(n^4\cdot\ell^2)$. 
\end{theorem}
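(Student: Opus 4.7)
The strategy is to pipeline four polynomial-time stages matching the paths in Table~\ref{tab:overview_algorithms}: enumeration of the critical set, pointwise computation of the join face-sets, extraction of the labelled/unlabelled mergegram, and construction of the face-Reeb graph.

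\textbf{Critical set.} The critical values of the join-facegram $C_X=\bigvee_i T_X^i$ are contained in $S:=\bigcup_{i=1}^\ell S_i$, where $S_i$ is the critical set of $T_X^i$. Each treegram on $n$ taxa, being a filtered subpartition, has at most $n-1$ merging events, so $|S|\leq \ell(n-1)=O(n\ell)$. Between consecutive elements of $S$ every $T_X^i$ is constant, hence so is $C_X$, and it suffices to compute $C_X(t)$ at each $t\in S$.

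\textbf{Pointwise joins.} At each $t\in S$, every $T_X^i(t)$ is a subpartition of $X$, so $|T_X^i(t)|\leq n$ and $\sum_{i=1}^\ell |T_X^i(t)|\leq n\ell$, with every face of size at most $n$. By Proposition~\ref{prop:expl-formula for join of face-sets} and its Corollary, $C_X(t)$ is the set of $\subset$-maximal elements of $\bigcup_i T_X^i(t)$, computable in $O((n\ell)^2\cdot n)=O(n^3\ell^2)$. A naive sweep over the $O(n\ell)$ values of $S$ yields $O(n^4\ell^3)$, one factor of $\ell$ too loose. I would sharpen this by an incremental update: passing between consecutive elements of $S$ corresponds to a single merge $\{B_1,B_2\}\mapsto\{B_1\cup B_2\}$ inside exactly one treegram, so $\bigcup_i T_X^i$ changes by $O(1)$ elements. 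Maintaining the current list of maximal faces, each transition requires (i) testing whether $B_1\cup B_2$ is covered by any current face of the union, in cost $O(n\ell\cdot n)=O(n^2\ell)$, and (ii) deleting any previously-maximal face that is now strictly contained in $B_1\cup B_2$, again in $O(n^2\ell)$. Over all $O(n\ell)$ transitions this builds the facegram in $O(n^3\ell^2)$, comfortably within the claimed bound.

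\textbf{Mergegram and Reeb graph.} Once $C_X$ is stored as the sequence $\{C_X(t)\}_{t\in S}$, the labelled mergegram $\mathbf{mgm}^*(C_X)$ is extracted by recording, for each face $\sigma$ that ever appears in $C_X$, its birth and death times. The number of distinct faces is bounded by $\sum_i |\{\text{faces of }T_X^i\}|=O(n\ell)$, and one sweep through $|S|=O(n\ell)$ critical times per face costs $O(n^2\ell^2)$; forgetting labels (and, if desired, counting multiplicities) yields $\mathbf{mgm}(C_X)$ at no extra cost. The face-Reeb graph is assembled during the same sweep, by reading off at each transition $t_i\to t_{i+1}$ the vertex set $V_i$, edge set $E_i$, and the up/down maps $r_i,\ell_i$ prescribed in Definition~\ref{def:Reeb def}; this bookkeeping is $O(n\ell)$ per transition and $O(n^2\ell^2)$ overall. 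Summing the stages gives $O(n^3\ell^2)+O(n^2\ell^2)=O(n^4\ell^2)$.

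\textbf{Main obstacle.} The only non-routine step is the amortised analysis in Stage~2. Without exploiting that successive elements of $S$ differ by a single local merge in a single treegram, the total cost inflates by a factor of $\ell$ and misses the claimed bound. Verifying that the maximal-face list can be maintained under such local updates — i.e.~that no long-range recomputation is ever needed because only $B_1\cup B_2$ is freshly introduced — is the key technical point to check carefully before the rest of the proof falls into place.
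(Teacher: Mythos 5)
Your proof is correct in substance but takes a genuinely different route from the paper's. The paper never materializes the facegram pointwise: it works entirely on the filtration side, using the closed-form expression $F_X(\sigma)=\min_k\max_{x_i,x_j\in\sigma}U^k_X(x_i,x_j)$ together with Prop.~\ref{prop:mgm-formula} and Cor.~\ref{cor:S-cor} to write each lifespan as $I_\sigma=\bigl[F_X(\sigma),\min_{\sigma\subsetneqq\tau\in S}F_X(\tau)\bigr)$, where $S$ is the set of all distinct blocks occurring in the input treegrams; it bounds $|S|$ by $n(2n-1)\ell=O(n^2\ell)$ and pays $O(|S|)$ per interval, giving $O(n^4\ell^2)$, and then recovers (i) and (iii) from the labelled mergegram being a complete invariant. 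You instead sweep the $O(n\ell)$ critical values and maintain the set of $\subset$-maximal elements of $\bigcup_i T^i_X(t)$ incrementally, invoking Prop.~\ref{prop:expl-formula for join of face-sets} pointwise. Your route buys two things: an explicit representation of $C_X$ at every critical value (the paper gets the facegram only implicitly through $\mathbf{mgm}^*$), and a sharper bound --- your count of $O(n)$ distinct blocks per treegram (at most $2n-1$ tree nodes) improves on the paper's $n(2n-1)$, and your amortised Stage~2 lands at $O(n^3\ell^2)$ rather than $O(n^4\ell^2)$. The paper's route buys simplicity: there is no dynamic data structure to maintain and no case analysis on transitions, just a double loop over a static set of simplices.

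One point in your argument needs to be restated before it is airtight: the claim that each transition between consecutive critical values is ``a single merge inside exactly one treegram'' is not literally true --- distinct treegrams can share a critical value, a single treegram can perform several merges (or a multi-way merge) at one time, and treegrams also have leaf-appearance events. What saves the analysis is the amortised count: each treegram creates each of its $O(n)$ blocks exactly once, so the total number of insertions into $\bigcup_i T^i_X$ over the whole sweep is $O(n\ell)$, and charging $O(n^2\ell)$ per insertion (your steps (i) and (ii)) still yields $O(n^3\ell^2)$. You should also note that deletions caused by a merge cannot promote a previously non-maximal face to maximality, since any face dominated by $B_1$ or $B_2$ is still dominated by $B_1\cup B_2$; this is what guarantees that only local updates are ever needed. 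With the argument phrased in this amortised form, the proof is complete and in fact slightly stronger than the theorem as stated.
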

\begin{proof}
Since the mergegram can directly be obtained by the labeled mergegram (by projecting its elements to the second coordinate), and since the labeled mergegram is complete invariant for facegrams, among (i), (ii) and (iv), it suffices to show our complexity claim only for computing the labeled mergegram. Also for (iii) as well: computing the Reeb graph of a facegram is not more complicated than the descriptive complexity of facegram and of the labeled mergegram, having an upper bound on the computational complexity of the labeled mergegram, will yield an upper bound on the complexity of computing the Reeb graph.  

\begin{figure}
    \centering
    \includegraphics[width=0.4\textwidth]{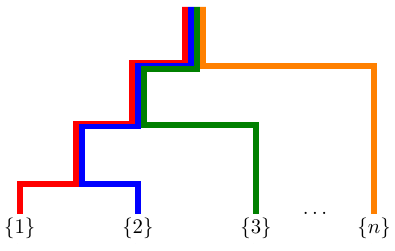}
    \caption{{\bf Worst case scenario for a tree in terms of run-time in the algorithm.} 
    Example of a tree with the maximal number of different height values which are $2n-1$ for $n$ taxa/leaves.}
    \label{fig:proof_trees}
\end{figure}

     First, observe that for each treegram $\caT _X^i$, viewed as a facegram, the number of different simplices in it, will be no more than the number of its edges, which is one number less than the number of its vertices. Because of the tree structure in $\caT _X^i$, the number of its vertices is upper bounded by: the product of 
     \begin{itemize}
     \item[(i)] the number of different distance values in the matrix $U_X^i$, which is no larger than $2n-1$: because of the tree structure of $\caT ^i_X$, there are at most $2n-1$ different height values for the tree; the value $n$ can be attained when the treegram is as in Fig.~\ref{fig:proof_trees} (for this type of treegrams, there are $n$ leaf nodes and there are $(n-1)$ tree nodes, so $(2n-1)$ in total), and 
     \item[(ii)] the number of all possible vertices of a fixed height, which is no larger than $n$: because vertices of fixed height are at most as many as the cardinality of the level set of a fixed height which is a partition of the taxa set $X$ and the cardinality of it is at most $n$ because the finest partition is $\{\{x\}\mid x\in X\}$ which has cardinality same as $|X|=n$.  
     \end{itemize}
     Hence, the number of all different simplices in each treegram is at most $n\cdot(2n-1)$. 
     Now, consider the set $S$ of all different simplices appearing in the treegrams $\{\caT  _X^i\}_{i=1}^\ell$ (viewed as facegrams). By the subsequent argument $|S|\leq n\cdot (2n-1)\cdot\ell$.

     The join–filtration $\F _X$ of the tree filtrations associated to the ultrametrics
 $U^k_X$, $k=1,\ldots,\ell$, is given point-wise for every $\sigma\in\mathbf{pow}(X)$ by the formula:
     $$\F _X(\sigma)=\min_{k\in\{1,\ldots,\ell\}}\left(\max_{x_i,x_j\in\sigma}U^k_X(x_i,x_j)\right).$$
   By the second conclusion of \textit{Prop.~\ref{prop:expl-formula for join of face-sets}}, the definition of $S$ and that $\caF _X=\bigvee_{i=1}^{\ell} \caT ^i_X$, we obtain that $S\supset \mathbf{s}(\C _X)$. By \textit{Prop.~\ref{prop:mgm-formula}} and \textit{Cor.~\ref{cor:S-cor}}, the mergegram can be computed by using the set $S$ of simplices. Hence, there are at most $|S|$ non-empty intervals to compute to decide if they are non-empty, which is $O(n^2\cdot\ell)$. Now, once we have a simplex $\sigma\in S$, to decide if 
   $$\min_{\sigma\nsubseteq\tau\in S}\F _X(\tau)> \F _X(\sigma)$$
     takes $|S|$-steps (taking all possible $\tau\in S$) which is again $O(n^2\cdot\ell)$. 
     Therefore, computing the labeled mergegram $\mathbf{mgm}^*(\caF _X)$ and thus the join-facegram $\caF _X$ from the given set of ultranetworks, take time at most $O(n^4\cdot\ell^2)$.
\end{proof}

\begin{remark}
    The intuition behind \textit{Thm.~\ref{thm:complexity}}, is that by \textit{Rem.~\ref{rem:joinstwo}} the information of all maximal simplices in the join of the facegrams is already given by the collection of maximal simplices of the facegrams of the trees. 
This is not the case for cliquegrams where we need to find maximal cliques for the different filtration levels, either in the case of constructing the join-cliquegram from (cliquegrams of) the treegrams or when squashing the loops in the facegram. 
Thus, the cliquegram construction and therefore the calculation of the mergegram of cliquegram might need exponential time.
\end{remark}

\textbf{Join-facegram: algorithms}~\\
In the following, we list two algorithms for the mergegram of the join-facegram. 
Algorithm \ref{alg:join_tree_facegram} is still close to the actual implementation; we could just say: make a large union of all $(\sigma, I_\Sigma)$ and then just take the smallest filtration value of $\sigma$ over all such pairs. 
A bit more precisely,
the algorithm to calculate the labeled mergegram first collects all different pairs $(\sigma , I_\sigma)$ for a fixed $\sigma$ in all the labeled mergegrams of the treegrams to get the minimal birth time $\min _{i} F_{\caT ^i}(\sigma)$ and minimal death time $\min_{i} F_{\caT ^i}(\tau_i)$ for $\tau_i \supset \sigma$. Then the labeled mergegram for the join of the treegrams is directly given by this collection.
\begin{algorithm}
\caption{The mergegram of the join-facegram of the facegram of treegram (Version 1)}
\label{alg:join_tree_facegram}
\begin{algorithmic}[1]
\State Let $\mathbf{mgm}^{*}(\caT ^i_X)$ be the labeled mergegrams of the treegrams for $i \in \{ 1, \ldots , l\}$.
\State Let $S = \emptyset$,~$LM = \emptyset$
\ForAll{$i \in \{ 1, \ldots , l\}$}
    \ForAll{$(\sigma, I_\sigma) \in \mathbf{mgm}^{*}(\caT ^i_X)$}
        \If{$\sigma \not\in S$}
        
            \State $S \leftarrow S \cup \{ \sigma \}$
            \State $LM \leftarrow LM \cup \{ (\sigma, I_\sigma) \}$
        
        \Else
            
            \State Let $(\sigma, [b, d)])$ be the labeled interval in $L$ and $I_\sigma = [b_\sigma, d_\sigma)$.
            \State $LM \leftarrow LM \backslash \{(\sigma, I)\} \cup \{(\sigma, [\min(b, b_\sigma), \min(d, d_\sigma))\}$ \Comment{{Update entry}}
        \EndIf
    \EndFor
\EndFor

\ForAll{$(\sigma, [b_\sigma, d_\sigma)) \in LM$} \Comment{Update }
\State $b \leftarrow \min \{ b_\tau \mid \tau \supset \sigma \text{ and }(\tau, (b_\tau, d_\tau)) \in LM\}$
\If{$b_\sigma \geq b $} \Comment{Remove non-maximal faces in $LM$}
    \State $LM \leftarrow LM \backslash \{ (\sigma, [b_\sigma, d_\sigma)) \}$
\EndIf
\If{$b < d_\sigma$} \Comment{Update to earlier death-time}
    \State $LM \leftarrow LM \backslash \{ (\sigma, [b_\sigma, d_\sigma)) \} \cup \{(\sigma, [b_\sigma, b)) \}$
\EndIf
\EndFor

\State{Return $LM$}
\Comment{Return the labeled mergegram}
\end{algorithmic}
\end{algorithm}
The algorithm relies on the fact that for the associated ultranetworks for the tree distances for each $(\sigma, I_\sigma)$ in a treegram, the distance between taxa in the face-set $\sigma$ are smaller than the distances between points in $\sigma$ and outside of it, that is
\[ 
    \max_{y \in \sigma} \max_{x \in \sigma} U^i_X(x,y) 
    \leq 
    \min_{y \not\in \sigma} \max_{x \in \sigma} U^i_X(x,y),
\]
since this would otherwise violate the condition of the lattice structure of $\mathbf{Facegm}(X)$.
Therefore, to get the mergegram of the join-facegram, we have to consider all the labeled intervals in all the treegrams.
\begin{algorithm}
\caption{The mergegram of the join-facegram of a set of treegrams (Version 2)}
\label{alg:join_tree_facegram_second}
\begin{algorithmic}[1]
\State Let $\mathbf{mgm}^{*}(\caT ^i_X)$ be the labeled mergegrams of the treegrams for $i \in T:=\{ 1, \ldots , l\}$ and $U^i_X$ their corresponding ultranetworks.
\State and $LM \leftarrow \emptyset$ \Comment{Labeled intervals of the joined facegram.}
\ForAll{$\sigma \in \{\tau ~| ~\exists i\in T ~\exists I\text{ interval s.t. } (\tau, I) \in \mathbf{mgm}^{*}(\caT ^i_X)\}$}

        \State $b \leftarrow \min_{i \in T} \max_{y \in \sigma} \max_{x \in \sigma} U^i_X(x,y)$
        \Comment{get birth time}
        \State $d \leftarrow \min_{i \in T} \min_{y \not\in \sigma} \max_{x \in \sigma} U^i_X(x,y)$
        \Comment{get death time}
        \vspace{0.5\baselineskip}
        \If{$b \neq d$}\Comment{By definition of the treegrams, we always have $b\leq d$.}
            \State $LM \leftarrow LM \cup \{ (\sigma, ~[b, d)~) \}$
        \EndIf
\EndFor
\State Return $LM$
\Comment{Return the labeled mergegram}
\end{algorithmic}
\end{algorithm}

In general, the possibly computationally expensive part of the computation of the mergegram of a facegram is the construction / size of the facegram itself. 
In the case of the facegram arising as the join of treegrams, all the necessary information is already encoded in the collection of all the labeled mergegrams of the treegrams, and we can compute it in polynomial time. 
Furthermore, for a given facegram representation, we can calculate the mergegram of it in at most $O(M^2)$ time, where $M$ is the number of different faces.

\begin{remark}
    It should be noted that for the mergegram each simplex can only contribute to at most one non-trivial interval to the mergegram. 
\textit{Alg.~\ref{alg:join_tree_facegram}} takes this into account by explicitly matching the simplices with an appropriate interval. 
In \textit{Alg.~\ref{alg:join_tree_facegram_second}} this matching is done implicitly using just the appropriate minimal/maximal values in the ultramatrices of the treegrams. This leads to major speed-ups in the algorithm, in particular, since we do not need a separate check for maximality in the end as in \textit{Alg.~\ref{alg:join_tree_facegram}}.
\end{remark}

\begin{remark}{Space complexity.}
    By the comment in the proof for \textit{Thm.~\ref{thm:complexity}} we have that the maximal number of different simplices appearing in the labeled mergegram of a treegram is $2|X|-1$. 
    Hence, the the join-mergegram contains at most $\min(l(2n-1), 2^n)$ different birth-death tuples. 
    Furthermore, during computation, we only need to save all actual maximal faces in the different treegrams as well as the minimizer matrix, which is the same number as tuples. 
    Hence, 
    \[
    O(2\min(l(2n-1), 2^n) + n^2) 
    .
    \]
\end{remark}

\textbf{Metric space reconstruction via facegram}
Instead of representing a metric space as a cliquegram, we can also use the facegram representation. 
Opposed to the cliquegram - which gives the same result as the Rips-complex in terms of maximal cliques and their filtration values - we have the flexibility of choosing any filtration on the metric space to build our facegram.

\begin{enumerate}
\item Given a finite metric space $M$ (e.g.~a point cloud) with $n$ points, label these with $1, 2, \ldots, n$.
\item Choose a filtration on the metric space, e.g. the Alpha complex \cite{edelsbrunner2022computational}. 
\item Sort the different filtration values and obtain the maximal faces for each of the filtration values, to obtain the facegram of the metric space.
\item Compute the mergegram of that facegram.
\end{enumerate}

For an implementation of these steps, see \cite{ourgitcode} for a more detailed explanation. 
In general, the hurdles for the implementation are to find a quick implementation for the desired filtration which lends itself to fast construction of the facegram. Obtaining the mergegram from the facegram can then be done in the same fashion as for the cliquegram using Algorithm \ref{alg:mergegram_cliquegram}.

\subsection{Experiments with datasets}\label{subsec:experiments}
We examine the mergegram of the join-facegram, first to an artificial dataset and then to a benchmark biological dataset.

\textbf{Artificial dataset}

 Let us consider two randomly chosen dendrograms given in Fig. \ref{fig:example_dendrogram}. There exist several ways to construct random dendrograms, one is to choose a random distance matrix and use UPGMA (unweighted pair group method with arithmetic mean) hierarchical clustering method. For random treegrams other hierarchical clustering algorithms like single-linkage clustering can be used. 
 The respective mergegrams of the dendrograms constructed are shown in \textit{Fig.~\ref{fig:example_mergegram_same})}. 

\begin{figure}[ht!]
    \centering
    \includegraphics[width=0.7\textwidth]{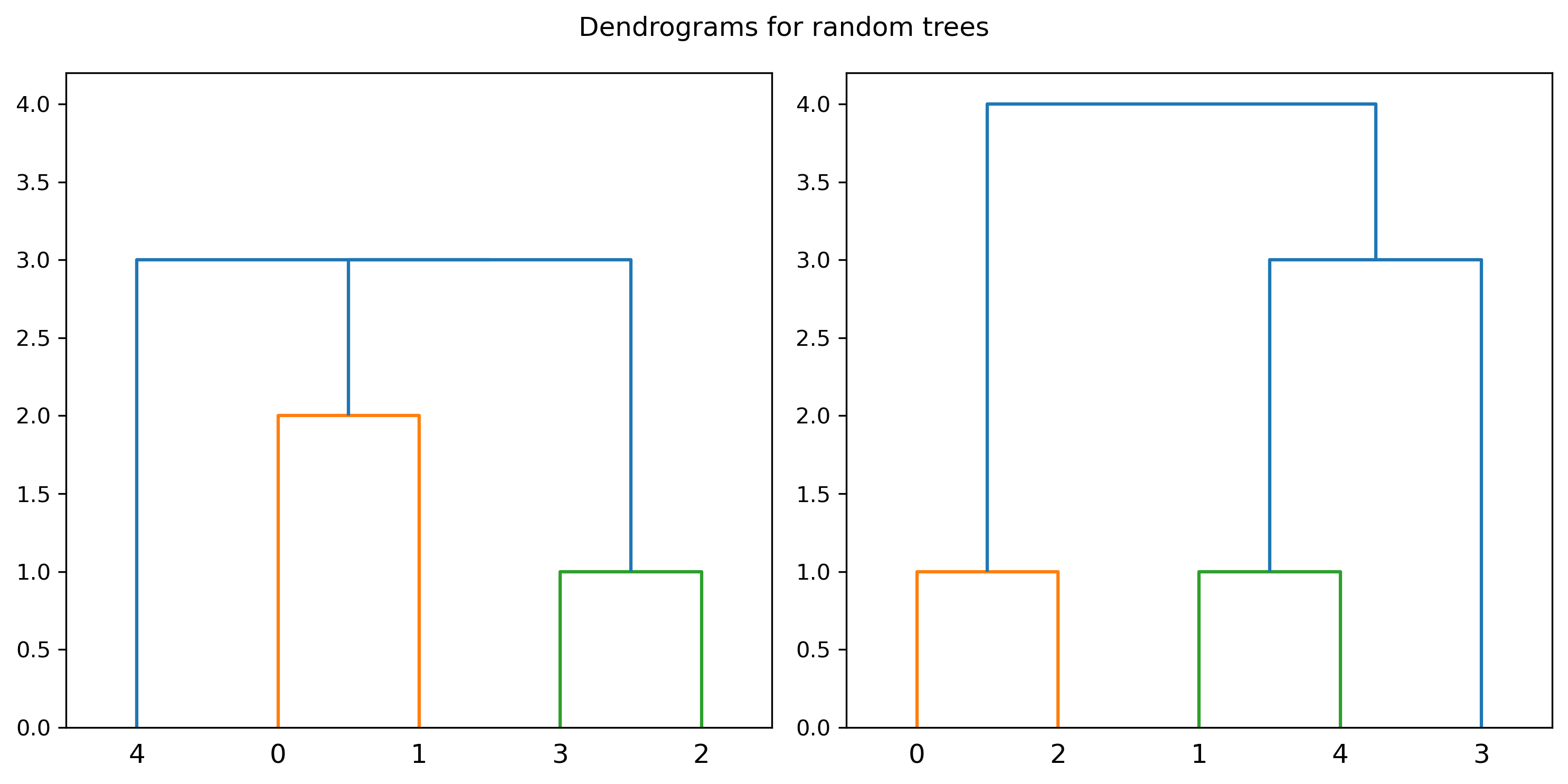}
    \caption{{\bf Dendrograms of random trees.} 
    The dendrograms of two random trees. The coloring is corresponding to certain clusters by \textsc{scipy}.
    }
    \label{fig:example_dendrogram}
\end{figure}

\begin{figure}
    \centering
    \includegraphics[width=0.7\textwidth]{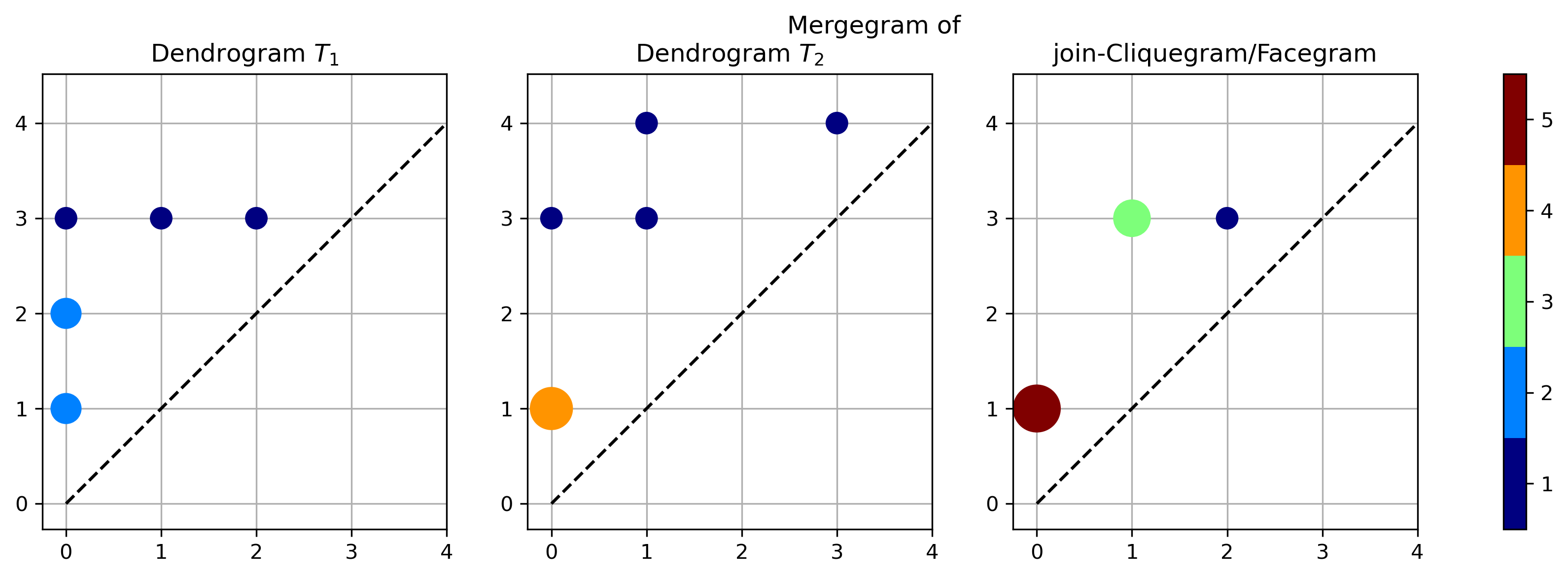}
    \caption{{\bf Mergegrams of random trees.} 
    The mergegrams of the two random trees (left and middle). Treegrams viewed as either cliquegrams or facegrams always give the same mergegram. Coloring as well as the size of the points show the multiplicity of each point. Mergegram of the join-cliquegram and of the join-facegram (right) of the two treegrams are the same in this case. Coloring is done by the multiplicity of each point.}
    \label{fig:example_mergegram_same}
\end{figure}

For these two dendrograms we can now compute their join-cliquegram as well as their join-facegram, in the respective lattices of cliquegrams and facegrams. In this case, both the join of cliquegrams and the join of facegrams coincide: we get three different points in the mergegram all with different multiplicities. 
On the other hand, the join-cliquegram and join-facegram may differ, depending on the subpartitions. An example are the dendrograms in Fig. \ref{fig:example_differentDendrograms}. The mergegrams of the treegrams and the distinct mergegrams of the join-cliquegram and join-facegram are shown in \ref{fig:example_differentTreegrams} and \ref{fig:example_differentMergegram} respectively.

\begin{figure}
    \centering
    \includegraphics[width=0.7\textwidth]{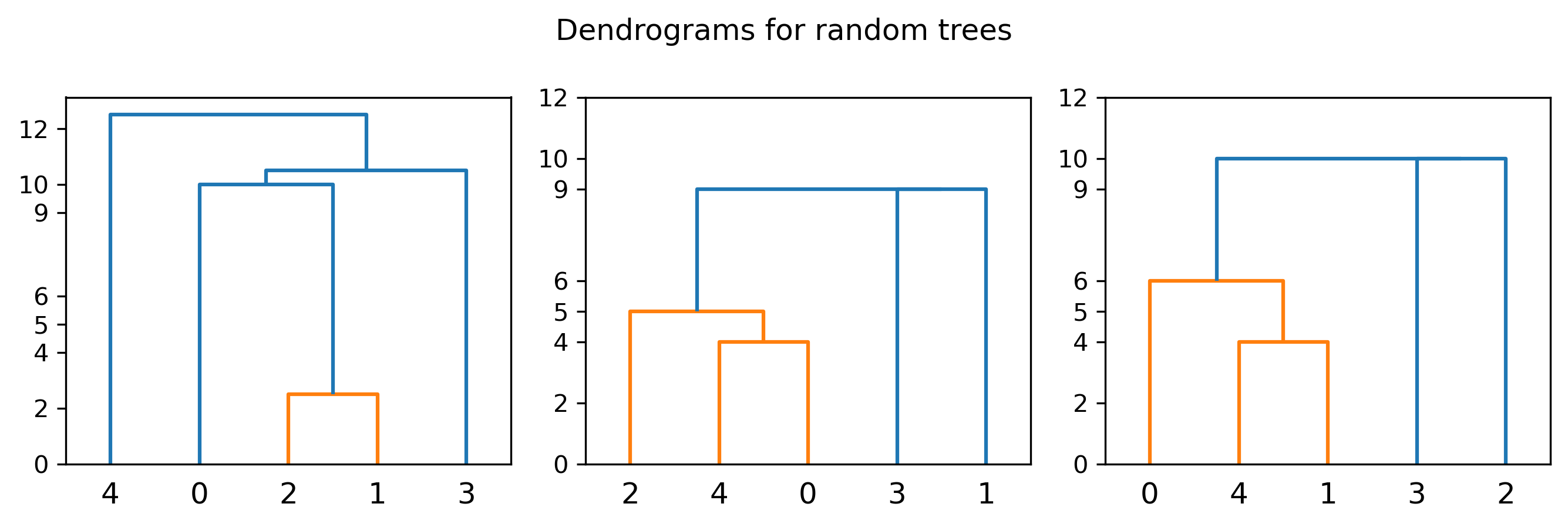}
    \caption{{\bf Three different dendrograms for a specific dataset.} 
    Three different dendrograms on the same taxa set.}
    \label{fig:example_differentDendrograms}
\end{figure}
\begin{figure}
    \centering
    \includegraphics[width=0.7\textwidth]{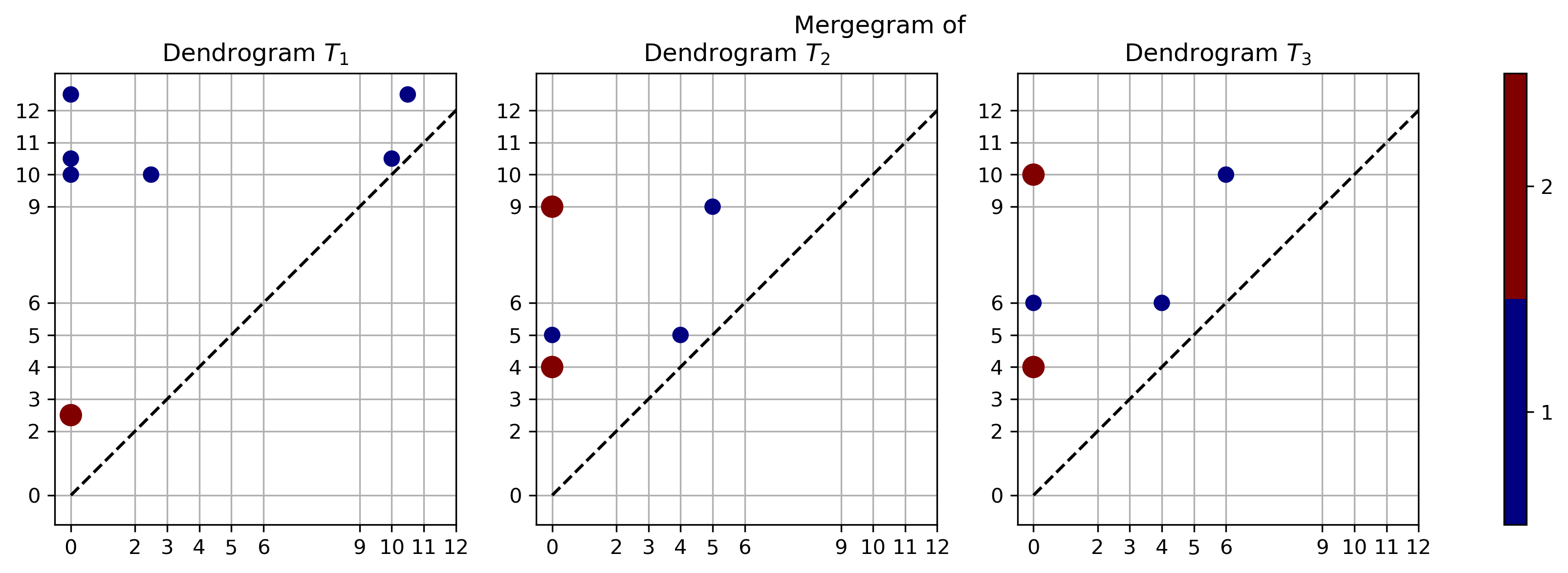}
    \caption{{\bf Mergegrams for three treegrams.} The mergegrams of the treegrams shown in \ref{fig:example_differentDendrograms}.}
    \label{fig:example_differentTreegrams}
\end{figure}

While the birth-/death-times are the same for the singletons for both cliquegram and facegram in Fig. \ref{fig:example_differentMergegram}, there is one more point in the mergegram of the join-cliquegram while the lifespans of these points are less (or equal) to the ones in the join-facegram.

\begin{figure}
    \centering
    \includegraphics[width=0.7\textwidth]{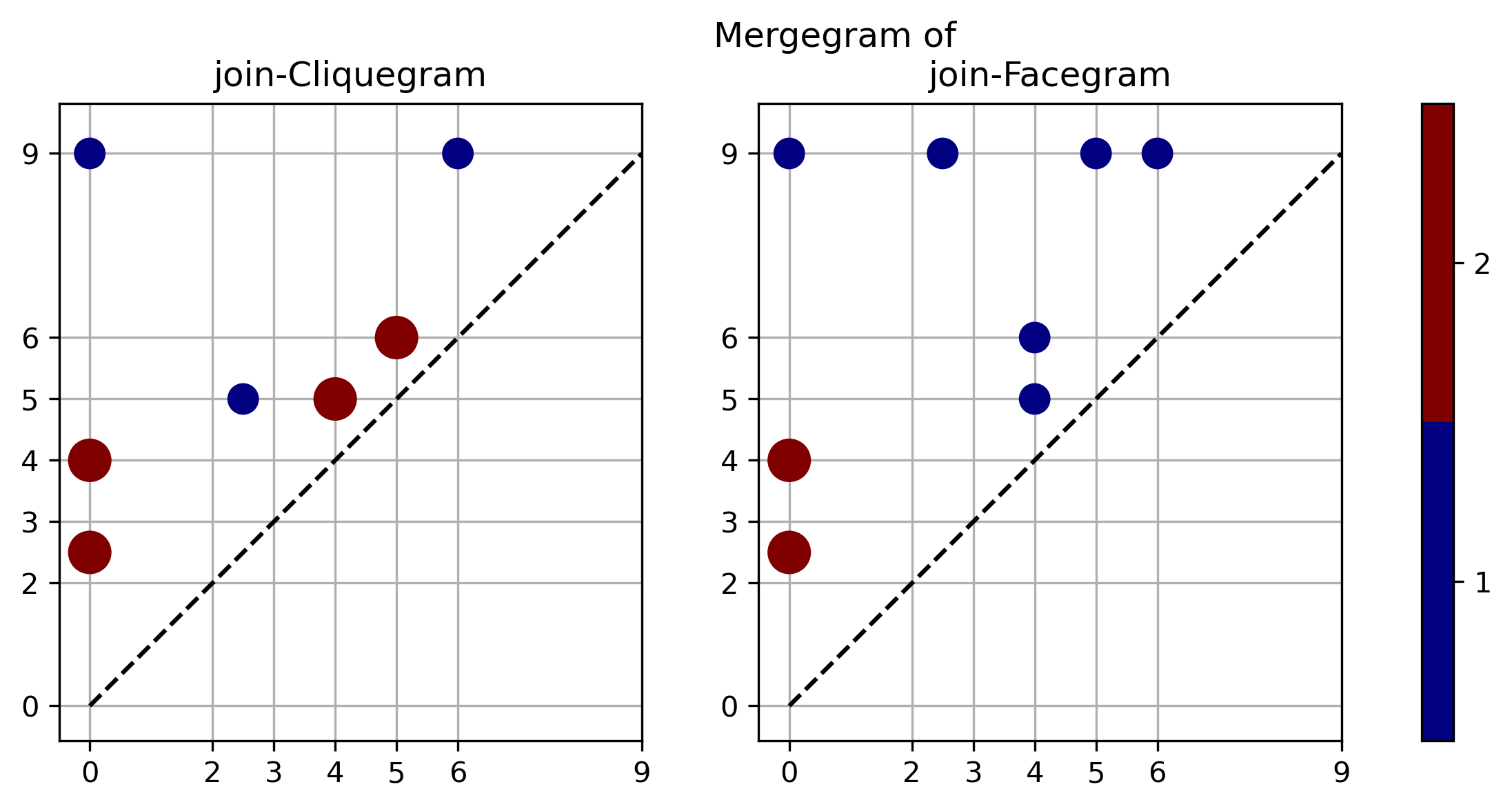}
    \caption{{\bf The Mergegram of the join in the cliquegram and facegram setting.} The mergegrams of the joins of the treegrams in \ref{fig:example_differentDendrograms}. We can see that the mergegrams are different for the cliquegram and facegram setting.}
    \label{fig:example_differentMergegram}
\end{figure}

\par\medskip
\textbf{Biological data set}
In the following, we use some trees coming from real phylogenetic trees in \cite{singhal2021congruence} to compute their join in the cliquegram and facegram settings. 
We have selected $68$ common taxa for $161$ dendrograms, two of which are shown in Fig. \ref{fig:realdata_dendrogram}.

\begin{table}
\centering
\begin{tabular}{cccc}
\bfseries mean & \bfseries standard deviation & \bfseries minimum & \bfseries maximum \\
\midrule
0.3499 & 0.1110 & 0.1117 & 0.9120
\end{tabular}
\caption{Summary statistics of the pairwise bottleneck distances between the mergegrams of different treegrams.}
\label{tab:stats_for_realtrees}
\end{table}

The mergegrams of the join-cliquegram and join-facegram only differ by 4 points which all have low persistence. 
Furthermore, there are only 3 points which have persistence (or lifetime) greater than 1, they are the one associated to the singletons 67, 66 as well as by the cluster formed by all the other taxa which is killed by $X$. Hence, we can directly see that those two taxa are outliers in all of the trees. 
The taxa correspond to UMMZ\_200325 and MVZ\_Herp\_189994 respectively. 
In \textit{Fig.~\ref{fig:realdata_dendrogram}}, an example dendrogram is shown where the two outlier taxa are marked red. In the plot of the mergegram of the join-facegram we exclude the three high persistence points, see \textit{Fig.~\ref{fig:readldata_mergegram}}. 

\begin{figure}
    \centering
    \includegraphics[width=1\textwidth]{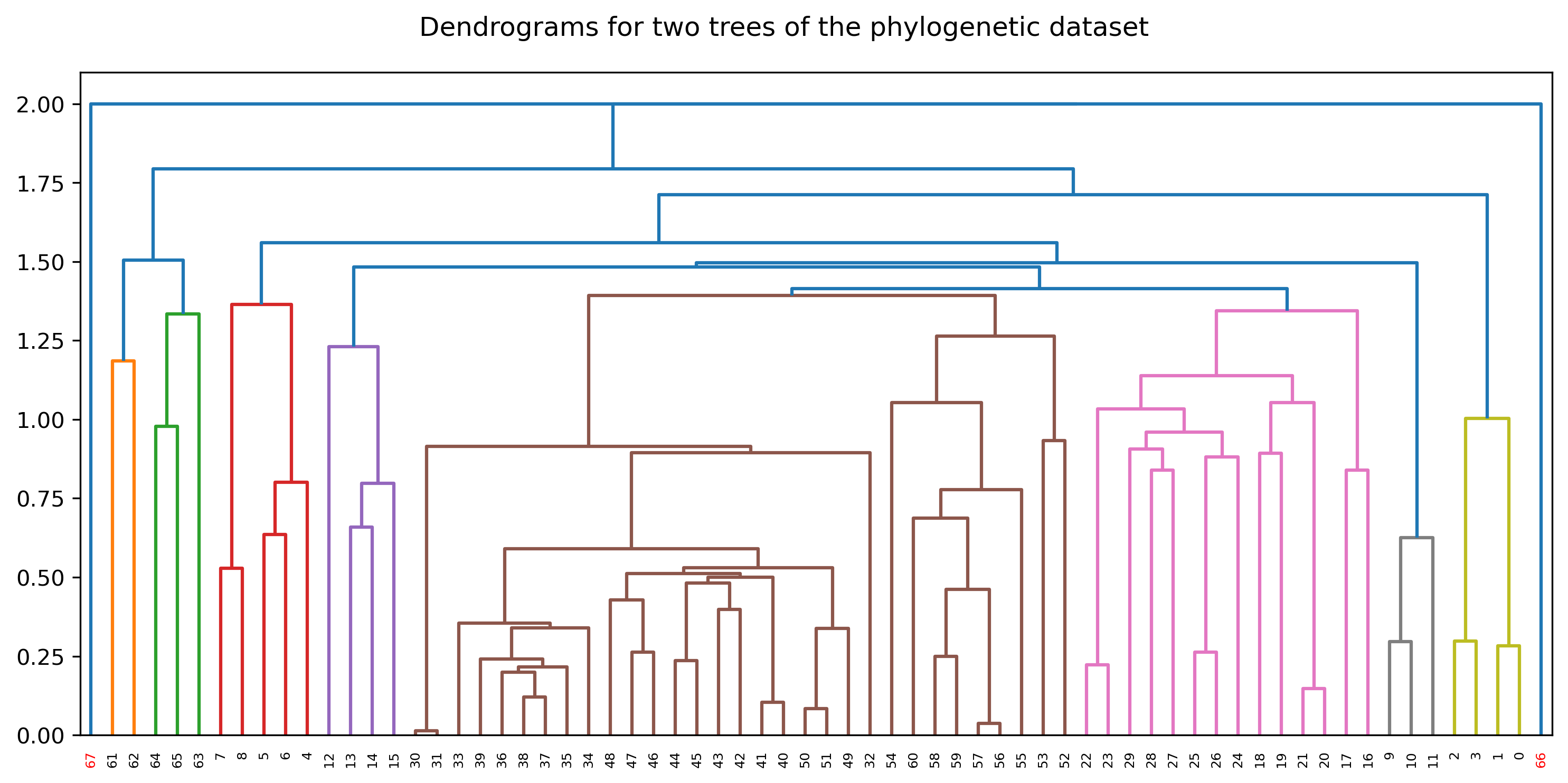}
    \caption{{\bf Example dendrogram for phylogentic tree dataset.} 
    The dendrograms of one trees from the 161 different trees available for this dataset. The two outlier taxa with numbers 66 and 67 are marked in red.}
    \label{fig:realdata_dendrogram}
\end{figure}

\begin{figure}
    \centering
    \includegraphics[width=0.5\textwidth]{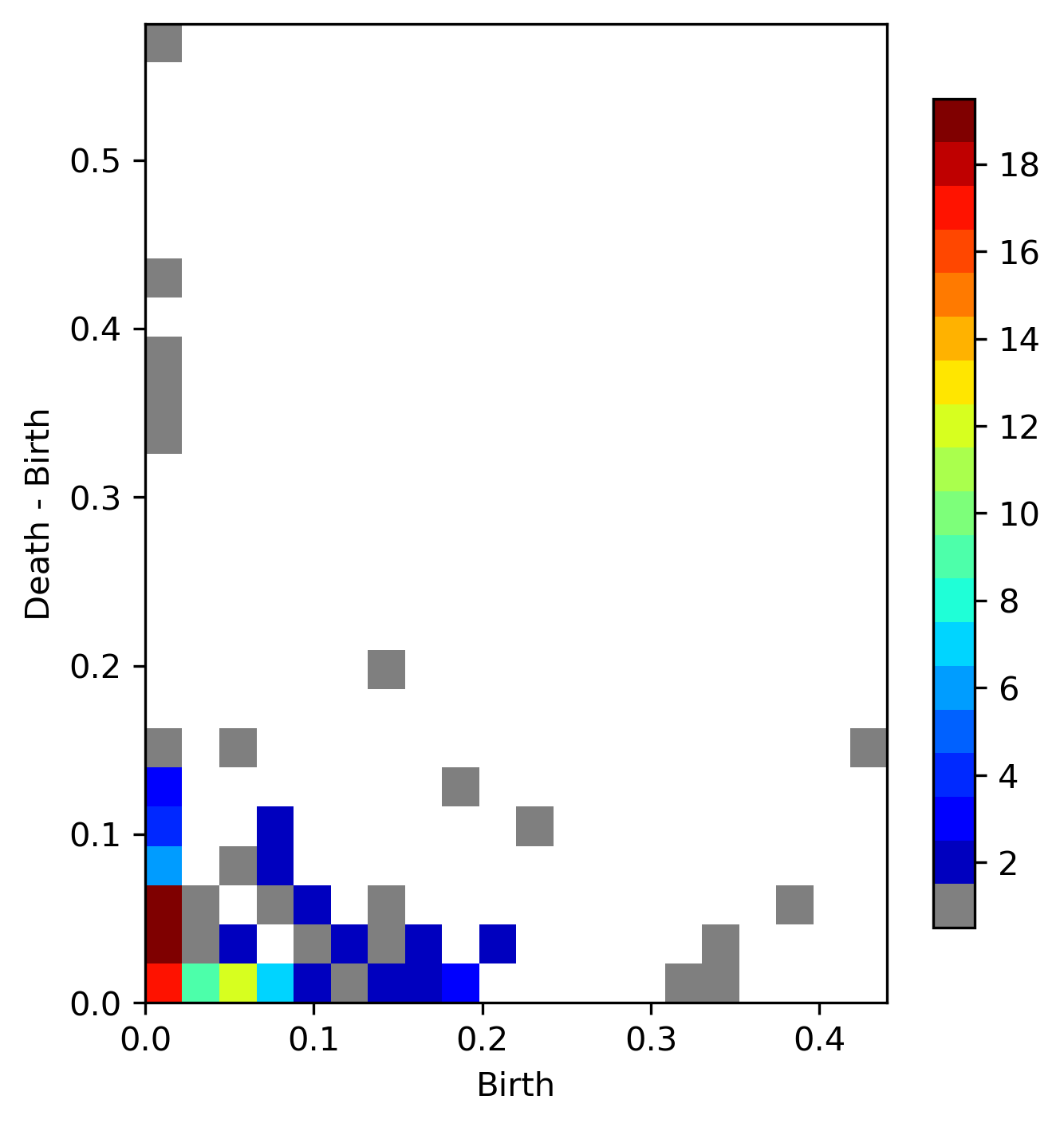}
    \caption{{\bf Mergegram of the join-facegram for phylogenetic tree dataset.}
    Heatmap of the 2d-Histogram plot taken for tuples $(b, d-b)$ for $b$ birth and $d$ death pairs of the original persistence diagram of the mergegrams of the join-facegram. We excluded three points in the diagram which correspond to the singletons 66, 67 as well as the cluster formed by all taxa not being 66, 67 in this plot.}
    \label{fig:readldata_mergegram}
\end{figure}

\par\medskip
\textbf{Bottleneck distance of mergegrams}
As mentioned before, we can interpret the mergegram as a persistence diagram and can therefore use distances like the bottleneck distance to compare two mergegrams with each other. 
To highlight one further property of the join of cliquegrams and facegrams, we consider the following setting:

Given a network with $21$ spanning trees over a common taxa set $X = \{ x_1, \ldots, x_n\}$ of $n$ taxa and their treegrams $\caT _X^1, \ldots , \caT _X^{21}$, compute the join-cliquegrams $\caC _X^k = \bigvee _{i=1}^{k} \caT _X^i, $
where $k = 1, \ldots , 21$. 
Now compute the bottleneck distances $d_{\mathrm{B}}$ between the mergegrams of these join spans with the mergegram of the join of all treegrams, i.e.
$$ d_{\mathrm{B}} (\mathbf{mgm}(\caC _X^k), \mathbf{mgm}(\caC _X^{21})).$$

Analogously, we can do the same construction for the join-facegram. 
Fig. \ref{fig:bottleneckdistance} shows the plots of these distances for different numbers of leaves of the network having $21$ spanning trees each. 
The bottleneck distances decrease until we reach a distance of zero at the end. This highlights the property of the join again of being the smallest cliquegram/facegram encoding the information given in the trees.

\begin{figure}[ht]
\centering
\includegraphics[width=1\textwidth]{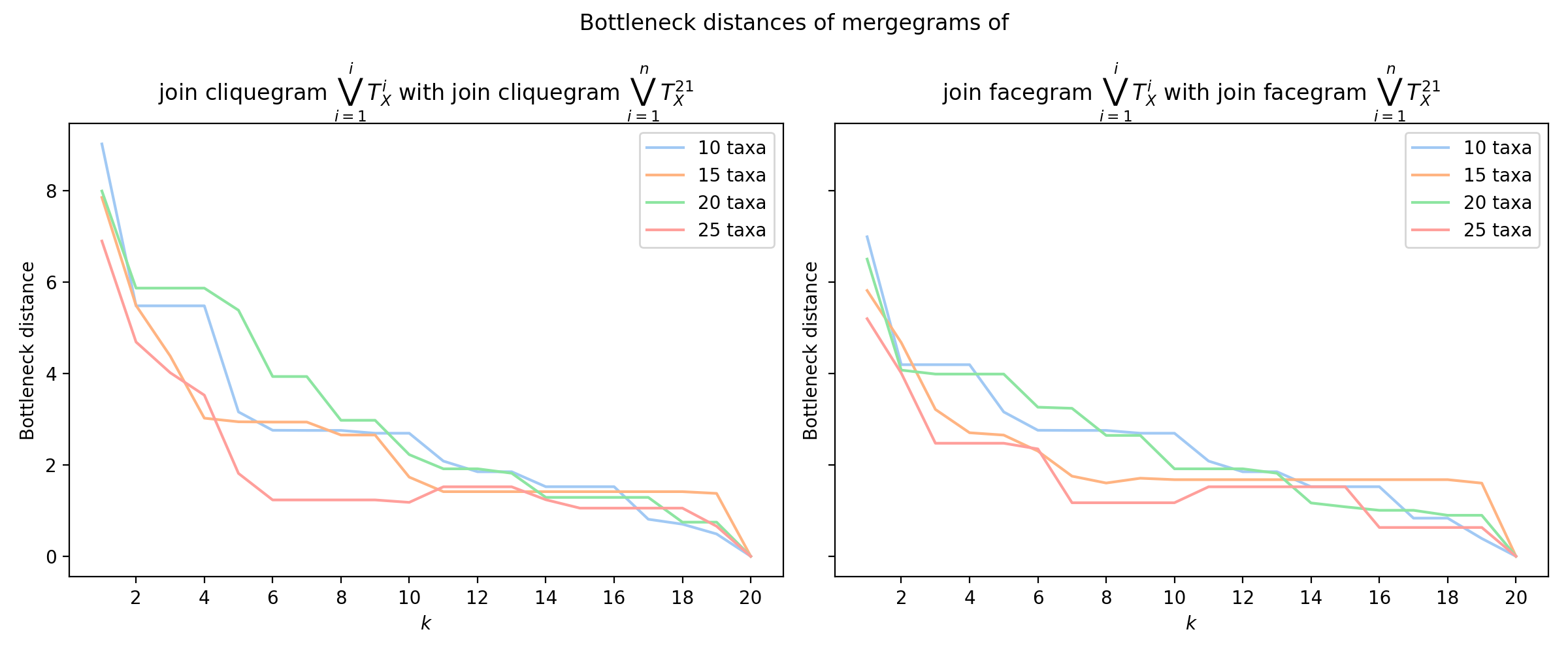}
\caption{{\bf Comparison between the joins of a subset and the all trees with bottleneck distance for the cliquegram and facegram setting.} 
Progression of bottleneck distances between the mergegram of the join of the first $n$ treegrams and the mergegram of the join of all 21 treegrams for different number of taxa for the case of cliquegrams and facegrams.}
\label{fig:bottleneckdistance}
\end{figure}

\par\medskip
\textbf{Runtimes}

To give a better intuition, we list the running times for the computations of the mergegram of the join-cliquegram and join-facegram for different datasets of treegrams in \textit{Tab.~\ref{tab:runtimes_short}}. The running times for the join-facegram are split into computation of the labeled mergegrams for the treegrams as well as computing the mergegram of the join-facegram with that information.

The datasets for the runtime computations are picked in increasing complexity of the resulting minimizer matrix, i.e. how many different new edges and thus new cliques appear when. The datasets consist of the phylogenetic trees from \cite{singhal2021congruence} (phylo. trees), as well as randomly generated trees whose treed distances are perturbations from each other (similar Trees), a random point cloud where the the points are in 5 clusters together with 10 random points and we compute a distance matrix for which we take out one tree per random point cloud using complete hierachical clustering (trees (Blobs)) as well as completely random trees (random trees). 
For the latter two datasets we consider different number of taxa as well as different numbers of trees. The larger table of this data can be found in the appendix \textit{Tab.~\ref{tab:runtimes_long}}. For a more thorough discussion of runtimes depend on some of the characteristics of the given data, see the code repository in \cite{ourgitcode}.

Computations have been done on a Apple M1 Pro (32GB RAM , 2021).
\begin{table}
\centering
\begin{tabular}{lcc|rrr}
\toprule
\textbf{dataset} & \textbf{number} & \textbf{number} & \textbf{cliquegram} & \textbf{treegrams} & \textbf{facegram} \\
 & \textbf{trees} & \textbf{taxa}  & \textbf{time (in s)} & \textbf{time (in s)} & \textbf{time (in s)}\\
\midrule
similar Trees & 50 & \multirow{2}{*}{40} & 0.03 & 0.07 & 0.03\\
 & 100 & & 0.03 & 0.14 & 0.09 \\
 \cline{2-6}
 & 50 & \multirow{2}{*}{60} & 14.94 & 0.12 & 0.05 \\
 & 100 & & 4.95 & 0.22 & 0.16 \\
 \hline
trees (Blobs) & 50 & \multirow{2}{*}{40} & 0.66 & 0.07 & 0.04 \\
 & 100 & & 0.80 & 0.13 & 0.10 \\
  \cline{2-6}
 & 50  & \multirow{2}{*}{60} & 1.99 & 0.10 & 0.10 \\
 & 100 & & 22.28 & 0.22 & 0.23 \\
 \hline
random trees & 50 & \multirow{2}{*}{40} & 1.90& 0.06 & 0.04 \\
& 100 & & 3.03 & 0.13 & 0.15 \\
 \cline{2-6}
& 50 & \multirow{2}{*}{60} & 221.89 & 0.10 & 0.09 \\
& 100 & & 241.43 & 0.21 & 0.30 \\
 \hline
phylo. Trees & 160 & 68 & 0.04& 0.49 & 0.02 \\
\bottomrule
\end{tabular}
\caption{Table listing the best runtime of the different algorithms provided to compute the mergegrams of the trees as well as the join-cliquegram and join-facegram of these trees for different datasets and number of trees and number of taxas. For total runtime of the computation of the mergegram of the join-facegram the time for the computation of all of the labeled mergegrams of the treegrams need to be added to the stated runtime.}
\label{tab:runtimes_short}
\end{table}

As stated in the table, we can see that increasing the number of trees influences the computations, but the number of taxa is by far the larger influence. In particular for the cliquegram, the number of trees only change the distancematrix in the form of the minimizer matrix slightly. As can be seen, the join-facegram together with the treegrams runtime are far below the runtimes for the cliquegrams for increasing number of taxa. The cliquegram is only quicker if we have a large number of trees with only slight differences as can be seen for the phylogenetic tree dataset as well as th similar tree dataset. The reason for the far increased runtime for 50 trees and 60 taxa in similar trees can be understood, when considering the number of unique values in the minimizer matrix, which for this random generation of trees is greater than for the case of 100 trees and 60 taxa, see \textit{Tab.~\ref{tab:runtimes_long}}.

\section{Future work}

Due to the versatility of the concepts introduced in this work, there are several avenues which warrant further investigation. In the following, we highlight some possible next steps. 

To highlight the usefulness of the methods in a more comprehensive way, an extensive analysis and comparison with current methods should be done for problems in biology as well as for general metric spaces highlighting where clique-and facegrams and their invariants fit in. 

Given a family of treegrams $\{ \caT^i _{X_i}\}_{i\in I}$ over different finite taxa sets $X_i$, we can define a join-facegram $\vee \caT^i_X$ via $X=\bigcup_{i\in I}X_i$ as well as extending each $\caT^i_{X_i} : \mathbf{pow}(X_i) \rightarrow \R$ to $\mathbf{pow}(X) \rightarrow \R$ by extending with a proper constant value. 
This constant value depends on the operation we are considering, for example for the join a candidate might be $\max _{i\in I} \max_{\sigma \subseteq X_i} \caT^i_{X_i}(\sigma)$.

As shown, when given a family of treegrams we can think of them as "points" in the lattice of cliquegrams or facegrams. 
This means that we can also cluster these treegrams using, for example, median-clustering by considering the cluster representatives to be the join of the treegrams in the respective clusters. 
Then we can use the bottleneck distance of these cluster representatives to compare them. 
In general and for applications such as these, other notions of distance than the bottleneck distance, like Wasserstein distances can be examined. 

The running times for computing the cliquegram and its mergegram rely on the exponential running time of the maximal clique finding algorithms. It should be further studied if we can improve the running times in practice by exploiting the fact that we consider all maximal cliques which appear during a filtration. 
Hence, instead of finding all maximal cliques in a graph extend individual taxa and find the maximal cliques containing these taxa for all the different instances during the (VR-)filtration, see e.g.~\cite{zomorodian2010}. 
Due to the critical importance of this for the time complexity, this directions warrant further research. 
Additionally, a parallel framework as well as approximating algorithms might also help to improve runtimes substantially. 

Data representations such as the Critical Simplex Diagrams allow us to efficiently store the cliquegram / facegram. These representations allows us to talk more about invariants on the clique-/facegrams directly without considering a mergegram invariant of them. 
These can be focusing on the structure of the underlying graph with vertices being the maximal cliques/simplices as edges reflecting the covering relation, i.e. in a poset we would say that $x$ covers $y$ if there is no $z$ with $x < z < y$. This structure can be embedded in the Hasse diagram and can be understood as a directed acyclic graph (DAG). 

Finally, the extensions of the ideas of developed in this work to a multi-filteration setting is of interest. For example, is the mergegram of a $k$-parameter filtration a multi-set of intervals in $\R^k$? Can the clique- and facegram still be interpreted as certain edge-labeled DAGs?

\section{Conclusion}

Nowadays, increasing computational power allows for more complex models of the data to be used in practice. 
Formalizing concepts in (computational) biology need to be balanced in terms of general usefulness of the models as well as the theoretical foundation of the models introduced. 
In this paper, we were motivated by the biological problem of the phylogenetic network reconstruction problem (PNRP) and focused subsequently on setting the groundwork to extend the theory of treegrams to the concepts of cliquegrams and facegrams. 
The goal was to establish new concepts, their descriptions, possible stable invariants and their computations while highlighting their connections with existing ideas. 
Throughout this process, we found interesting connections to current research and constructions in mathematical biology, combinatorial topology, as well as topological data analysis.
As shown, the extensions involve correspondences of different lattices which are either combinatorial or matrix-like in nature.
In particular, these correspondences can lead to speed-ups in the computations. 
Furthermore, we have shown that there exists an efficient invariant in the form of the mergegram with respect to the sizes of the clique-/facegrams which is proven to be stable under the bottleneck distance. 
The current framework shows promise due to its versatility for different applications and we are looking forward to explore further its potential for biological data analysis.

\section*{Code availability}
A python implementation can be found in \cite{ourgitcode}, where the computation of the mergegram of a cliquegram (from a distance matrix) as well as computation of the mergegram of the join-facegram are implemented. 
Additionally, it contains a basic implementation of the Critical Simplex Diagram as well as the jupyter notebooks for the analysis in \textit{Sec.~\ref{subsec:experiments}}.

\section*{Acknowledgements}
The authors gratefully thank Facundo M\'emoli and Woojin Kim for their beneficial mathematical feedback, and the anonymous reviewers for their feedback, comments, and suggestions. The authors also thank Bernd Sturmfels and Ruriko Yoshida for their helpful comments, as well as Krzysztof Bartoszek for his help in collecting the benchmark biological data. PD, JS and AS were supported by the Dioscuri program initiated by the Max Planck Society, jointly managed
with the National Science Centre (Poland), and mutually funded by the
Polish Ministry of Science and Higher Education and the German Federal
Ministry of Education and Research.

\bibliography{references}

@book{rabadan2019topological,
  title={Topological data analysis for genomics and evolution: topology in biology},
  author={Rabad{\'a}n, Ra{\'u}l and Blumberg, Andrew J},
  year={2019},
  publisher={Cambridge University Press}
}

@article{yan2019structural,
  title={A structural average of labeled merge trees for uncertainty visualization},
  author={Yan, Lin and Wang, Yusu and Munch, Elizabeth and Gasparovic, Ellen and Wang, Bei},
  journal={IEEE Transactions on Visualization and Computer Graphics},
  volume={26},
  number={1},
  pages={832--842},
  year={2019},
  publisher={IEEE}
}

@article{desilva2018theory,
  title={Theory of Interleavings on Categories with a Flow},
  author={De Silva, Vin and Munch, Elizabeth and Stefanou, Anastasios},
  journal={Theory and Applications of Categories},
  volume={33},
  number={21},
  pages={583--607},
  year={2018}
}

@article{gasparovic2019intrinsic,
  title={Intrinsic interleaving distance for merge trees},
  author={Gasparovic, Ellen and Munch, Elizabeth and Oudot, Steve and Turner, Katharine and Wang, Bei and Wang, Yusu},
  journal={arXiv preprint arXiv:1908.00063},
  year={2019}
}

@incollection{munch2019ell,
  title={The  L-infinity-cophenetic metric for phylogenetic trees as an interleaving distance},
  author={Munch, Elizabeth and Stefanou, Anastasios},
  booktitle={Research in Data Science},
  pages={109--127},
  year={2019},
  publisher={Springer}
}

@article{stefanou2020tree,
  title={Tree decomposition of reeb graphs, parametrized complexity, and applications to phylogenetics},
  author={Stefanou, Anastasios},
  journal={Journal of Applied and Computational Topology},
  volume={4},
  number={2},
  pages={281--308},
  year={2020},
  publisher={Springer}
}

@article{lesnick2020quantifying,
  title={Quantifying genetic innovation: Mathematical foundations for the topological study of reticulate evolution},
  author={Lesnick, Michael and Rabad{\'a}n, Ra{\'u}l and Rosenbloom, Daniel IS},
  journal={SIAM Journal on Applied Algebra and Geometry},
  volume={4},
  number={1},
  pages={141--184},
  year={2020},
  publisher={SIAM}
}

@article{erne2003posets,
  title={Posets generated by irreducible elements},
  author={Ern{\'e}, Marcel and {\v{S}}e{\v{s}}elja, Branimir and Tepav{\v{c}}evi{\'c}, Andreja},
  journal={Order},
  volume={20},
  pages={79--89},
  year={2003},
  publisher={Springer}
}

@book{roman2008lattices,
  title={Lattices and ordered sets},
  author={Roman, Steven},
  year={2008},
  publisher={Springer Science \& Business Media}
}

@article{chowdhury2022distances,
  title={Distances and isomorphism between networks: stability and convergence of network invariants},
  author={Chowdhury, Samir and M{\'e}moli, Facundo},
  journal={Journal of Applied and Computational Topology},
  pages={1--119},
  year={2022},
  publisher={Springer}
}

@article{scoccola2020locally,
  title={Locally persistent categories and metric properties of interleaving distances},
  author={Scoccola, Luis N},
  year={2020},
  school={The University of Western Ontario (Canada)},
  url={https://ir.lib.uwo.ca/etd/7119/}
}

@article{cardona2008perl,
  title={A perl package and an alignment tool for phylogenetic networks},
  author={Cardona, Gabriel and Rossell{\'o}, Francesc and Valiente, Gabriel},
  journal={BMC bioinformatics},
  volume={9},
  number={1},
  pages={1--5},
  year={2008},
  publisher={BioMed Central}
}

@article{kingman1982genealogy,
  title={On the genealogy of large populations},
  author={Kingman, John FC},
  journal={Journal of applied probability},
  pages={27--43},
  year={1982},
  publisher={JSTOR}
}

@article{carlsson2010characterization,
  title={Characterization, stability and convergence of hierarchical clustering methods.},
  author={Carlsson, Gunnar E and M{\'e}moli, Facundo},
  journal={J. Mach. Learn. Res.},
  volume={11},
  number={Apr},
  pages={1425--1470},
  year={2010}
}

@inproceedings{smith2016hierarchical,
  title={Hierarchical representations of network data with optimal distortion bounds},
  author={Smith, Zane and Chowdhury, Samir and M{\'e}moli, Facundo},
  booktitle={2016 50th Asilomar Conference on Signals, Systems and Computers},
  pages={1834--1838},
  year={2016},
  organization={IEEE}
}

@article{bubenik2015metrics,
  title={Metrics for generalized persistence modules},
  author={Bubenik, Peter and De Silva, Vin and Scott, Jonathan},
  journal={Foundations of Computational Mathematics},
  volume={15},
  number={6},
  pages={1501--1531},
  year={2015},
  publisher={Springer}
}

@article{memoli2017distance,
  title={A distance between filtered spaces via tripods},
  author={M{\'e}moli, Facundo},
  journal={arXiv preprint arXiv:1704.03965},
  year={2017}
}

@inproceedings{chazal2009proximity,
  title={Proximity of persistence modules and their diagrams},
  author={Chazal, Fr{\'e}d{\'e}ric and Cohen-Steiner, David and Glisse, Marc and Guibas, Leonidas J and Oudot, Steve Y},
  booktitle={Proceedings of the twenty-fifth annual symposium on Computational geometry},
  pages={237--246},
  year={2009}
}

@article{kozlov1999complexes,
  title={Complexes of directed trees},
  author={Kozlov, Dmitry N},
  journal={Journal of Combinatorial Theory, Series A},
  volume={88},
  number={1},
  pages={112--122},
  year={1999},
  publisher={Elsevier}
}

@article{curry2018fiber,
  title={The fiber of the persistence map for functions on the interval},
  author={Curry, Justin},
  journal={Journal of Applied and Computational Topology},
  volume={2},
  pages={301--321},
  year={2018},
  publisher={Springer}
}

@article{kim2023interleaving,
  title={Interleaving by parts: join decompositions of interleavings and join-assemblage of geodesics},
  author={Kim, Woojin and M{\'e}moli, Facundo and Stefanou, Anastasios},
  journal={Order},
  pages={1--41},
  year={2023},
  publisher={Springer}
}

@article{memoli2021quantitative,
  title={Quantitative simplification of filtered simplicial complexes},
  author={M{\'e}moli, Facundo and Okutan, Osman Berat},
  journal={Discrete \& Computational Geometry},
  volume={65},
  pages={554--583},
  year={2021},
  publisher={Springer}
}

@book{edelsbrunner2022computational,
  title={Computational topology: an introduction},
  author={Edelsbrunner, Herbert and Harer, John L},
  year={2022},
  publisher={American Mathematical Society}
}

@inproceedings{quillen2006higher,
  title={Higher algebraic K-theory: I},
  author={Quillen, Daniel},
  booktitle={Higher K-Theories: Proceedings of the Conference held at the Seattle Research Center of the Battelle Memorial Institute, from August 28 to September 8, 1972},
  pages={85--147},
  year={2006},
  organization={Springer}
}

@inproceedings{huson2008summarizing,
  title={Summarizing multiple gene trees using cluster networks},
  author={Huson, Daniel H and Rupp, Regula},
  booktitle={International Workshop on Algorithms in Bioinformatics},
  pages={296--305},
  year={2008},
  organization={Springer}
}

@inproceedings{kim2018formigrams,
  title={Formigrams: Clustering Summaries of Dynamic Data.},
  author={Kim, Woojin and M{\'e}moli, Facundo},
  booktitle={CCCG},
  pages={180--188},
  year={2018}
}

@article{huson2012dendroscope,
  title={Dendroscope 3: an interactive tool for rooted phylogenetic trees and networks},
  author={Huson, Daniel H and Scornavacca, Celine},
  journal={Systematic biology},
  volume={61},
  number={6},
  pages={1061--1067},
  year={2012},
  publisher={Oxford University Press}
}

@article{solis2017phylonetworks,
  title={PhyloNetworks: a package for phylogenetic networks},
  author={Sol{\'\i}s-Lemus, Claudia and Bastide, Paul and An{\'e}, C{\'e}cile},
  journal={Molecular biology and evolution},
  volume={34},
  number={12},
  pages={3292--3298},
  year={2017},
  publisher={Oxford University Press}
}

@article{than2008phylonet,
  title={PhyloNet: a software package for analyzing and reconstructing reticulate evolutionary relationships},
  author={Than, Cuong and Ruths, Derek and Nakhleh, Luay},
  journal={BMC bioinformatics},
  volume={9},
  number={1},
  pages={1--16},
  year={2008},
  publisher={Springer}
}

@InProceedings{elkin2020mergegram,
  author =	{Elkin, Yury and Kurlin, Vitaliy},
  title =	{{The Mergegram of a Dendrogram and Its Stability}},
  booktitle =	{45th International Symposium on Mathematical Foundations of Computer Science (MFCS 2020)},
  pages =	{32:1--32:13},
  series =	{Leibniz International Proceedings in Informatics (LIPIcs)},
  ISBN =	{978-3-95977-159-7},
  ISSN =	{1868-8969},
  year =	{2020},
  volume =	{170},
  editor =	{Esparza, Javier and Kr\'{a}l', Daniel},
  publisher =	{Schloss Dagstuhl -- Leibniz-Zentrum f{\"u}r Informatik},
  address =	{Dagstuhl, Germany},
  URL =		{https://drops.dagstuhl.de/entities/document/10.4230/LIPIcs.MFCS.2020.32},
  URN =		{urn:nbn:de:0030-drops-127281},
  doi =		{10.4230/LIPIcs.MFCS.2020.32},
}

@article{de2016categorified,
  title={Categorified reeb graphs},
  author={De Silva, Vin and Munch, Elizabeth and Patel, Amit},
  journal={Discrete \& Computational Geometry},
  volume={55},
  number={4},
  pages={854--906},
  year={2016},
  publisher={Springer}
}

@incollection{nakhleh2010evolutionary,
  title={Evolutionary phylogenetic networks: models and issues},
  author={Nakhleh, Luay},
  booktitle={Problem solving handbook in computational biology and bioinformatics},
  pages={125--158},
  year={2010},
  publisher={Springer}
}

@misc{ourgitcode,
  author = {Dlotko, Pawel and Senge, Jan F and Stefanou, Anastasios},
  title = {Phylogenetic models and invariants of graphs},
  year = {2024},
  publisher = {GitHub},
  journal = {GitHub repository},
  note={\url{https://github.com/dioscuri-tda/Phylogenetic-models-and-invariants-of-graphs}}
}

@article{lin2017convexity,
  title={Convexity in tree spaces},
  author={Lin, Bo and Sturmfels, Bernd and Tang, Xiaoxian and Yoshida, Ruriko},
  journal={SIAM Journal on Discrete Mathematics},
  volume={31},
  number={3},
  pages={2015--2038},
  year={2017},
  publisher={SIAM}
}

@article{feichtner2006complexes,
  title={Complexes of trees and nested set complexes},
  author={Feichtner, Eva Maria},
  journal={Pacific journal of mathematics},
  volume={227},
  number={2},
  pages={271--286},
  year={2006},
  publisher={Mathematical Sciences Publishers}
}

@article{billera2001geometry,
  title={Geometry of the space of phylogenetic trees},
  author={Billera, Louis J and Holmes, Susan P and Vogtmann, Karen},
  journal={Advances in Applied Mathematics},
  volume={27},
  number={4},
  pages={733--767},
  year={2001},
  publisher={Elsevier}
}

@book{maclagan2021introduction,
  title={Introduction to tropical geometry},
  author={Maclagan, Diane and Sturmfels, Bernd},
  volume={161},
  year={2021},
  publisher={American Mathematical Society}
}

@article{boissonnat2018CSD,
author = {Boissonnat, Jean-Daniel and C. S., Karthik},
title = {An Efficient Representation for Filtrations of Simplicial Complexes},
year = {2018},
issue_date = {October 2018},
publisher = {Association for Computing Machinery},
address = {New York, NY, USA},
volume = {14},
number = {4},
issn = {1549-6325},
url = {https://doi.org/10.1145/3229146},
doi = {10.1145/3229146},
journal = {ACM Trans. Algorithms},
month = {8},
articleno = {44},
numpages = {21},
keywords = {Simplicial complex, filtration, compact data structures}
}

@article{BronKerbosch73,
author = {Bron, Coen and Kerbosch, Joep},
title = {Algorithm 457: Finding All Cliques of an Undirected Graph},
year = {1973},
publisher = {Association for Computing Machinery},
address = {New York, NY, USA},
volume = {16},
number = {9},
issn = {0001-0782},
url = {https://doi.org/10.1145/362342.362367},
doi = {10.1145/362342.362367},
journal = {Commun. ACM},
month = {9},
pages = {575–577},
numpages = {3}
}

@article{singhal2021congruence,
  title={Congruence and conflict in the higher-level phylogenetics of squamate reptiles: an expanded phylogenomic perspective},
  author={Singhal, Sonal and Colston, Timothy J and Grundler, Maggie R and Smith, Stephen A and Costa, Gabriel C and Colli, Guarino R and Moritz, Craig and Pyron, R Alexander and Rabosky, Daniel L},
  journal={Systematic Biology},
  volume={70},
  number={3},
  pages={542--557},
  year={2021},
  publisher={Oxford University Press}
}

@article{TDA_topology_viral,
author = {Joseph Minhow Chan  and Gunnar Carlsson  and Raul Rabadan },
title = {Topology of viral evolution},
journal = {Proceedings of the National Academy of Sciences},
volume = {110},
number = {46},
pages = {18566-18571},
year = {2013},
doi = {10.1073/pnas.1313480110}
}

@article{TDA_topology_camara,
  title={Inference of ancestral recombination graphs through topological data analysis},
  author={C{\'a}mara, Pablo G and Levine, Arnold J and Rabadan, Raul},
  journal={PLoS computational biology},
  volume={12},
  number={8},
  pages={e1005071},
  year={2016},
  publisher={Public Library of Science San Francisco, CA USA}
}

@book{kozlov2008,
address = {Berlin, Heidelberg},
author = {Kozlov, Dimitry},
publisher = {Springer},
series = {Algorithms and Computation in Mathematics},
title = {{Combinatorial Algebraic Topology}},
volume = {21},
year = {2008}
}

@article{kim2024extracting,
  title={Extracting Persistent Clusters in Dynamic Data via M{\"o}bius Inversion},
  author={Kim, Woojin and M{\'e}moli, Facundo},
  journal={Discrete \& Computational Geometry},
  volume={71},
  number={4},
  pages={1276--1342},
  year={2024},
  publisher={Springer}
}

@article{moon_cliques_1965,
	title = {On cliques in graphs},
	volume = {3},
	issn = {1565-8511},
	url = {https://doi.org/10.1007/BF02760024},
	doi = {10.1007/BF02760024},
	abstract = {A clique is a maximal complete subgraph of a graph. The maximum number of cliques possible in a graph withn nodes is determined. Also, bounds are obtained for the number of different sizes of cliques possible in such a graph.},
	number = {1},
	journal = {Israel Journal of Mathematics},
	author = {Moon, J. W. and Moser, L.},
	month = mar,
	year = {1965},
	pages = {23--28},
}

@article{tomita_maxcliques_2006,
title = {The worst-case time complexity for generating all maximal cliques and computational experiments},
journal = {Theoretical Computer Science},
volume = {363},
number = {1},
pages = {28-42},
year = {2006},
note = {Computing and Combinatorics},
issn = {0304-3975},
doi = {https://doi.org/10.1016/j.tcs.2006.06.015},
author = {Etsuji Tomita and Akira Tanaka and Haruhisa Takahashi},
}

@book{gratzer2011lattice,
  title={Lattice theory: foundation},
  author={Gr\"atzer, George A},
  year={2011},
  publisher = {Birkha\"auser},
  address = {Basel},
  ISBN = {9783034800174},
}

@article{zomorodian2010,
title = {Fast construction of the Vietoris-Rips complex},
journal = {Computers \& Graphics},
volume = {34},
number = {3},
pages = {263-271},
year = {2010},
note = {Shape Modelling International (SMI) Conference 2010},
issn = {0097-8493},
doi = {https://doi.org/10.1016/j.cag.2010.03.007},
author = {Afra Zomorodian},
}

@inproceedings{conteComplexity2021,
author = {Conte, Alessio and Tomita, Etsuji},
title = {Overall and Delay Complexity of the CLIQUES and Bron-Kerbosch Algorithms},
year = {2021},
isbn = {978-3-030-68210-1},
publisher = {Springer-Verlag},
address = {Berlin, Heidelberg},
url = {https://doi.org/10.1007/978-3-030-68211-8_16},
doi = {10.1007/978-3-030-68211-8_16},
booktitle = {WALCOM: Algorithms and Computation: 15th International Conference and Workshops, WALCOM 2021, Yangon, Myanmar, February 28 – March 2, 2021, Proceedings},
pages = {195–207},
numpages = {13},
keywords = {Delay, Output sensitive, Graph enumeration, Maximal cliques},
location = {Yangon, Myanmar}
}

\clearpage
\appendix

\section{Postponed proofs}
Below, we provide detailed combinatorial topological proofs of some of the concepts we used. For a detailed introduction in combinatorial algebraic topology, we refer to the book of Kozlov \cite{kozlov2008}.
\begin{proposition}
\label{prop:cliq-clut-iso}
Let $X$ be a set of taxa. 
There is a natural isomorphism 
$$\mathbf{Cliq}(X)\stackrel[\mathbf{Cliq}]{\mathbf{Grph}}{\rightleftarrows}\mathbf{Grph}(X)$$
between the lattice $\mathbf{Cliq}(X)$ of all clique-sets over $X$ and the lattice $\mathbf{Grph}(X)$ of all graphs over $X$, where the maps are defined as follows
\begin{itemize}
    \item For any clique-set $\S_X$ over $X$, the graph $\mathbf{Grph}(\S_X):=(V_X,E_X)$ over $X$ is given by 
\begin{itemize}
    \item the vertex set $V_X:=\{x\in\X \mid x\in C\in \S_X\}$, and 
    \item the edge set $E_X:=\{\{x,x'\}\subset \X\mid \{x,x'\}\in C\in \S_X\text{, }x\neq x'\}$.
\end{itemize}
    \item For any graph $G_X$ over $X$, the clique-set $\mathbf{Cliq}(G_X)$ is the set of maximal cliques of $G_X$.
\end{itemize}
\end{proposition}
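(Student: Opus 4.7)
The plan is to verify that each of the two proposed maps is well-defined, then check they are mutually inverse, and finally confirm that both are order-preserving (so we actually get a lattice isomorphism, not merely a bijection).

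First I would check that $\mathbf{Grph}$ sends a clique-set to a genuine graph on $X$: this is essentially immediate since $V_X\subset X$ and each element of $E_X$ is a two-element subset of $V_X$ (the incidence with some $C\in S_X$ forces both endpoints to lie in $V_X$). For the other direction, I would check that $\mathbf{Cliq}$ sends a graph $G_X$ to a clique-set. Condition (i) in Defn.~\ref{defn:clique-set} (incomparability) is automatic from the definition of \emph{maximal} clique. Condition (ii) is the substantive point: given $Y\subset X$ such that every pair $\{y,y'\}\subset Y$ lies in some maximal clique $C_{y,y'}$, every pair $\{y,y'\}$ is an edge of $G_X$, hence $Y$ itself is a clique of $G_X$, and therefore $Y$ is contained in a maximal clique of $G_X$.

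Next I would verify the two compositions are the identity. For $\mathbf{Cliq}\circ\mathbf{Grph}$, start with a clique-set $S_X$, form the graph $G_X=(V_X,E_X)$, and compare the set of maximal cliques of $G_X$ with $S_X$. Every $C\in S_X$ is a clique of $G_X$ by construction, and by condition (i) on $S_X$ it is not contained in any other $C'\in S_X$, so I must show it is not enlargeable in $G_X$ either. Suppose $C\cup\{x\}$ were a clique of $G_X$; then every pair in $C\cup\{x\}$ lies in some element of $S_X$, so by condition (ii) applied to $S_X$ the whole set $C\cup\{x\}$ lies in some $C'\in S_X$, contradicting maximality of $C$ inside $S_X$. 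Conversely, any maximal clique $K$ of $G_X$ has the property that all its pairs belong to elements of $S_X$, whence $K\subset C$ for some $C\in S_X$ by condition (ii); maximality then forces $K=C$. For $\mathbf{Grph}\circ\mathbf{Cliq}$, I would observe that the vertex set of the graph built from the maximal cliques equals $V(G_X)$ (each vertex lies in some maximal clique) and similarly the edge set coincides with $E(G_X)$ (each edge extends to a maximal clique, and conversely every pair inside a maximal clique is an edge).

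Finally I would check that both maps are order-preserving with respect to the partial order on $\mathbf{Cliq}(X)$ given in Defn.~\ref{defn:clique-set} and the inclusion partial order on $\mathbf{Grph}(X)$. If $S_X\leq S'_X$, then every vertex/edge of $\mathbf{Grph}(S_X)$ comes from some $C\in S_X$, which is contained in some $C'\in S'_X$, giving the corresponding vertex/edge of $\mathbf{Grph}(S'_X)$. Conversely, if $G_X\subset G'_X$, any maximal clique $C$ of $G_X$ is a clique of $G'_X$ and hence contained in a maximal clique of $G'_X$, so $\mathbf{Cliq}(G_X)\leq \mathbf{Cliq}(G'_X)$.

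The only real obstacle is the equality $\mathbf{Cliq}(\mathbf{Grph}(S_X))=S_X$: this is precisely where condition (ii) of the definition of a clique-set is indispensable, and without that axiom the abstract notion of clique-set would fail to match the graph-theoretic notion of ``set of maximal cliques''. Once that is handled, everything else is essentially bookkeeping.
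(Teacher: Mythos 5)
Your proposal is correct and follows essentially the same route as the paper: check both maps are well defined and order preserving, and verify the two composites are the identity. In fact you supply more detail than the paper does at the one genuinely nontrivial step — the paper simply asserts $\mathbf{Cliq}(\mathbf{Grph}(S_X))=S_X$, whereas you correctly identify that this equality is exactly where axiom (ii) of Defn.~\ref{defn:clique-set} is needed (and without which, e.g., $\{\{x,y\},\{y,z\},\{z,x\}\}$ would not be recovered from the triangle graph).
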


\begin{proof}[Proof of \textit{Prop.~\ref{prop:cliq-clut-iso}}
]
It is straightforward to check that the maps $\mathbf{Grph}$ and $\mathbf{Cliq}$ are order preserving.
Now, we check that the pair $(\mathbf{Grph},\mathbf{Cliq})$ is a bijection of sets:
\begin{itemize}
    \item We have:
    \begin{align*}
    \mathbf{Cliq}(\mathbf{Grph}(\S_X))&=\left\{C\subset X\mid C\text{ is a maximal clique of the graph }\mathbf{Grph}(\S_X)\right\}\\
    &=\S_X.
\end{align*}
    \item We have $\mathbf{Grph}(\mathbf{Cliq}(G_X))=G_X$. Indeed:
\begin{align*}
V(\mathbf{Grph}(\mathbf{Cliq}(G_X))):&=\{x\in\X \mid x\in C\in \mathbf{Cliq}(G_X)\}=V(G_X)\text{, and }\\
E(\mathbf{Grph}(\mathbf{Cliq}(G_X))):&=\{\{x,x'\}\subset \X\mid \{x,x'\}\in C\in \mathbf{Cliq}(G_X)\text{, }x\neq x'\}\\
        &=\{\{x,x'\}\subset \X\mid \{x,x'\}\in E(G_X)\text{, }x\neq x'\}\\
&=E(G_X).
\end{align*}
\end{itemize}

\end{proof}

\begin{proof}[Proof of \textit{Prop.~\ref{prop:equivalence}}]
First, we check that the pair $(\Phi,\Psi)$ is a bijection of sets. 
\begin{align*}
    \Psi(\Phi(N_X))(x,x')&=\min\{t\in\R\mid x,x'\text{ belong to some clique in }\Phi(N_X)\}\\
    &=\min\{t\in\R\mid N_X(x,x')\leq t\}\\
    &=N_X(x,x').
\end{align*}
\begin{align*}
\Phi(\Psi(\C _X))(t)&=\bigvee\{\{x,x'\}\mid \Psi(\C _X)(x,x')\leq t\}\\
&=\bigvee\{\{x,x'\}\mid x,x' \text{ belong to some clique in }\C _X(s)\text{, for some }s\leq t\}\\
&=\bigvee\{\{x,x'\}\mid x,x' \text{ belong to some clique in }\C _X(t)\}\hspace{1em} (\text{since }\C _X(s)\leq \C _X(t))\\
&=\C _X(t).
\end{align*}
Now, we show that the bijection pair $(\Phi,\Psi)$ preserves the respective orders of the lattices.
\begin{itemize}
    \item $\Phi$ is order reversing: Suppose that $N_X,N'_X$ are two phylogenetic networks over $X$ such that $N_X\geq N'_X$. Let $t\in\R$. Then, for any $x,x'\in X$, we have $N_X(x,x')\leq t\Rightarrow N'_X(x,x')\leq t$. Thus, $\Phi(N_X)(t)\leq \Phi(N'_X)(t)$. 
    \item $\Psi$ is order preserving: Suppose that $\C _X,\C '_X$ are two cliquegrams over $X$ such that $\C _X\leq \C '_X$. Let $x,x'\in X$. Then, for any $t\in\R$, if $x,x'$ belong to some clique in $\C _X(t)$, $x,x'$ must belong to some clique in $\C '_X(t)$.
\end{itemize}
\end{proof}

\begin{proposition}
\label{prop:clut-simp}
Let $X$ be a set of taxa. 
There is a natural isomorphism 
$$\mathbf{Face}(X)\stackrel[\mathbf{Face}]{\mathbf{Simp}}{\rightleftarrows}\mathbf{Simp}(X)$$
between the lattice $\mathbf{Face}(X)$ of all face-sets of $X$ and the lattice $\mathbf{Simp}(X)$ of simplicial complexes over $X$, where the maps are defined as follows
\begin{itemize}
    \item For any face-set $\C _X$ over $X$, the simplicial complex $\mathbf{Simp}(\C _X)$ over $X$ is given by $$\mathbf{Simp}(\C _X):=\{S\subset X\mid S\subset C\text{, for some }C\in \C _X\}.$$
    \item For any simplicial complex $\S_X$ over $X$, the face-set \sloppy$\mathbf{Face}(\S_X):=\{\text{maximal faces of }\S_X\}$.
\end{itemize}
\end{proposition}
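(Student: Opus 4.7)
The plan is to verify that both maps are well-defined, mutually inverse, and order-preserving, which together yield the claimed lattice isomorphism. Since $X$ is finite throughout the paper, every simplex in a simplicial complex is contained in at least one maximal face, which is the only subtle point hiding in the argument.

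First I would check well-definedness. For $C_X\in\mathbf{Face}(X)$, the set $\mathbf{Simp}(C_X)$ is closed under taking non-empty subsets: if $T\subset S\subset C$ for some $C\in C_X$, then $T\subset C$, so $T\in\mathbf{Simp}(C_X)$; hence $\mathbf{Simp}(C_X)$ is a simplicial complex. For $S_X\in\mathbf{Simp}(X)$, the set $\mathbf{Face}(S_X)$ of maximal faces is by construction an antichain under $\subset$, hence a face-set in the sense of Defn.~\ref{dfn:face-set}.

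Next I would verify that the two compositions are identities. For $C_X\in\mathbf{Face}(X)$, every $C\in C_X$ lies in $\mathbf{Simp}(C_X)$ and, because $C_X$ is an antichain, no element of $\mathbf{Simp}(C_X)$ strictly contains $C$; hence $C$ is a maximal face, giving $C_X\subset\mathbf{Face}(\mathbf{Simp}(C_X))$. Conversely, any maximal face of $\mathbf{Simp}(C_X)$ is contained in some $C\in C_X$ by construction, and maximality forces equality with $C$. For $S_X\in\mathbf{Simp}(X)$, the inclusion $\mathbf{Simp}(\mathbf{Face}(S_X))\subset S_X$ is immediate since subsets of faces of $S_X$ lie in $S_X$; the reverse inclusion uses finiteness of $X$: any $\sigma\in S_X$ is contained in at least one $\subset$-maximal $\tau\in S_X$ (take any maximal element of the finite poset $\{\tau\in S_X\mid \tau\supset\sigma\}$), so $\sigma\in\mathbf{Simp}(\mathbf{Face}(S_X))$.

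Finally I would check order preservation in both directions. If $C_X\leq C'_X$ in $\mathbf{Face}(X)$, then for every $S\in\mathbf{Simp}(C_X)$ there exists $C\in C_X$ with $S\subset C$, and by hypothesis some $C'\in C'_X$ with $C\subset C'$, giving $S\in\mathbf{Simp}(C'_X)$; hence $\mathbf{Simp}$ is monotone. For the reverse direction, if $S_X\subset S'_X$ as simplicial complexes, then any maximal face $\sigma$ of $S_X$ lies in $S'_X$, and by the finiteness argument above is contained in some maximal face $\sigma'$ of $S'_X$, so $\mathbf{Face}(S_X)\leq \mathbf{Face}(S'_X)$ in the face-set order. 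The only mild obstacle is this finiteness-dependent extension of a face to a maximal face, and once it is in place the bijection together with two-sided monotonicity upgrades to a lattice isomorphism (meets and joins are determined by the order in a finite lattice, as used implicitly throughout the paper).
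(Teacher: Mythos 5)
Your proof is correct and is exactly the argument the paper leaves out (the paper states ``Proof is straightforward and omitted''); it mirrors the strategy used for the analogous Prop.~\ref{prop:cliq-clut-iso}, namely checking that the two composites are identities and that both maps are monotone. Your one genuinely useful addition is making explicit that the inclusion $S_X\subset\mathbf{Simp}(\mathbf{Face}(S_X))$ relies on every face of a finite complex lying in a maximal face, which is indeed the only point where anything could go wrong.
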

\begin{proof}
Proof is straightforward and omitted.
\end{proof}

\begin{remark}
By the first two bullets of Ex.~\ref{ex:subpartition}, we see that \textit{Prop.~\ref{prop:cliq-clut-iso}} generalizes the isomorphism between partitions on $X$ and equivalence relations on $X$.
\end{remark}

\begin{proof}[Proof of \textit{Prop.~\ref{thm:equivalencetwo}}]
First, we check that the pair $(\Phi,\Psi)$ is a bijection of sets. 
\begin{align*}
    \Psi(\Phi(\F _X))(S)&=\min\{t\in\R\mid S\text{ is contained in some face in }\Phi(\F _X)(t)\}\\
    &=\min\{t\in\R\mid \F _X(S)\leq t\}\\
    &=\F _X(S).
\end{align*}
\begin{align*}
\Phi(\Psi(\C _X))(t)&=\bigvee\{S\in\mathbf{pow}(X)\mid \Psi(\C _X)(S)\leq t\}\\
&=\bigvee\{S\in\mathbf{pow}(X)\mid S \in \C _X(s)\text{, for some }s\leq t\}\\
&=\bigvee\{S\in\mathbf{pow}(X)\mid S \in \C _X(t)\}\hspace{1em} (\text{since }\C _X(s)\leq \C _X(t))\\
&=\C _X(t).
\end{align*}
Now, we show that the bijection pair $(\Phi,\Psi)$ preserves the respective orders of the lattices.
\begin{itemize}
    \item $\Phi$ is order reversing: Suppose that $\F _X,\F '_X$ are two filtrations over $X$ such that $\F _X\geq \F '_X$. Let $t\in\R$. Then, for any $S\in \mathbf{pow}(X)$, we have $\F _X(S)\leq t\Rightarrow \F '_X(S)\leq t$. Thus, $\Phi(\F _X)(t)\leq \Phi(\F '_X)(t)$. 
    \item $\Psi$ is order preserving: Suppose that $\C _X,\C '_X$ are two facegrams over $X$ such that $\C _X\leq \C '_X$. Let $S\subset X$. Then, for any $t\in\R$, if $S$ is contained in some face in $\C _X(t)$, then $S$ must be contained in some face in $\C '_X(t)$.
\end{itemize}
\end{proof}

\section{Computational aspects}

\subsubsection{Running times}\label{subsec:App_runtimes}

Here we list a larger table for the datasets in \textit{Sec. \ref{subsec:experiments}}. 
The Algorithms listed with * are slight variations, which try to incorporate, e.g., in the case of the cliquegram, a choice between finding all maximal cliques or those which contain only the new edges. 
Also listed in the table are some of the properties of the datasets, which influence the runtimes.

\begin{table}[t]
\resizebox{!}{0.79\textheight}{\rotatebox{90}{
\begin{tabular}{lcccc|rrr|rrrr|rrr}
\toprule
\textbf{dataset} & \textbf{number} & \textbf{number} & \textbf{no. unique}
& $\varnothing$ new edges
& \multicolumn{7}{c}{\textbf{time (in s) for mergegram computation of the (join)}} & 
\multicolumn{3}{c}{\textbf{size mergegram}} \\
 & \textbf{trees} & \textbf{taxa} & \textbf{(minimizer)} & \textbf{(aux. Graph)} 
 & $\mathbf{\caC}$ (Alg1) & $\mathbf{\caC}$ (Alg1*) & $\mathbf{\caC}$ (Alg2) 
 & $\mathbf{\caT}$ & $\mathbf{\caF}$ (Alg1) & $\mathbf{\caF}$ (Alg1*) & $\mathbf{\caF}$ (Alg2) 
 & $\mathbf{\caC}$ & $\mathbf{\caT}$ & $\mathbf{\caF}$ \\
\midrule
similar Trees & 10 & 20 & 19 & 21.05 & 0.00 & 0.00 & 0.01 & 0.01 & 0.00 & 0.00 & 0.00 & 368 & 31 & 109 \\
& 50 & 20 & 11 & 36.36 & 0.00 & 0.00 & 0.00 & 0.03 & 0.01 & 0.01 & 0.01 & 118 & 30 & 332 \\
& 100 & 20 & 9 & 44.44 & 0.00 & 0.00 & 0.00 & 0.07 & 0.02 & 0.03 & 0.04 & 126 & 30 & 546 \\
& 500 & 20 & 8 & 50.00 & 0.00 & 0.00 & 0.00 & 0.35 & 0.36 & 0.26 & 0.28 & 87 & 30 & 1399 \\
& 10 & 40 & 29 & 55.17 & 0.13 & 0.10 & 0.26 & 0.01 & 0.00 & 0.00 & 0.01 & 11235 & 61 & 229 \\
& 50 & 40 & 19 & 84.21 & 0.07 & 0.03 & 0.08 & 0.07 & 0.04 & 0.03 & 0.04 & 4375 & 58 & 760 \\
& 100 & 40 & 18 & 88.89 & 0.03 & 0.03 & 0.09 & 0.14 & 0.16 & 0.09 & 0.09 & 3973 & 58 & 1461 \\
& 500 & 40 & 13 & 123.08 & 0.04 & 0.04 & 0.12 & 0.67 & 2.74 & 2.20 & 2.66 & 4777 & 58 & 5932 \\
& 10 & 60 & 41 & 87.80 & 6.19 & 4.48 & 13.45 & 0.02 & 0.01 & 0.01 & 0.02 & 501022 & 87 & 325 \\
& 50 & 60 & 31 & 116.13 & 17.59 & 14.94 & 40.20 & 0.12 & 0.11 & 0.05 & 0.11 & 1586455 & 85 & 1181 \\
& 100 & 60 & 24 & 150.00 & 4.95 & 6.75 & 16.75 & 0.22 & 0.41 & 0.16 & 0.29 & 575393 & 84 & 2228 \\
& 500 & 60 & 16 & 225.00 & 1.72 & 2.27 & 6.35 & 1.12 & 8.61 & 3.77 & 5.02 & 298664 & 85 & 9924 \\
 \hline
trees (Blobs)& 10 & 20 & 116 & 3.45 & 0.02 & 0.01 & 0.02 & 0.01 & 0.00 & 0.00 & 0.00 & 521 & 39 & 165 \\
& 50 & 20 & 175 & 2.29 & 0.04 & 0.01 & 0.02 & 0.03 & 0.01 & 0.01 & 0.02 & 836 & 39 & 508 \\
& 100 & 20 & 180 & 2.22 & 0.04 & 0.01 & 0.02 & 0.07 & 0.03 & 0.11 & 0.03 & 756 & 39 & 770 \\
& 500 & 20 & 188 & 2.13 & 0.04 & 0.01 & 0.02 & 0.38 & 0.27 & 0.31 & 0.33 & 938 & 39 & 2030 \\
& 10 & 40 & 228 & 7.02 & 0.14 & 0.08 & 0.09 & 0.01 & 0.00 & 0.01 & 0.01 & 824 & 79 & 319 \\
& 50 & 40 & 506 & 3.16 & 1.24 & 0.66 & 1.10 & 0.07 & 0.04 & 0.04 & 0.04 & 32040 & 79 & 998 \\
& 100 & 40 & 618 & 2.59 & 2.23 & 0.80 & 1.14 & 0.13 & 0.12 & 0.10 & 0.13 & 48797 & 79 & 1556 \\
& 500 & 40 & 745 & 2.15 & 3.13 & 0.85 & 1.07 & 0.66 & 1.16 & 1.07 & 1.99 & 59760 & 79 & 4341 \\
& 10 & 60 & 383 & 9.40 & 0.42 & 0.23 & 0.26 & 0.02 & 0.01 & 0.01 & 0.02 & 1294 & 119 & 525 \\
& 50 & 60 & 821 & 4.38 & 3.89 & 1.99 & 2.79 & 0.10 & 0.13 & 0.10 & 0.10 & 42213 & 119 & 1653 \\
& 100 & 60 & 1105 & 3.26 & 38.52 & 22.28 & 36.18 & 0.22 & 0.37 & 0.27 & 0.23 & 909429 & 119 & 2676 \\
& 500 & 60 & 1635 & 2.20 & 125.67 & 44.07 & 54.79 & 1.06 & 4.10 & 2.78 & 7.25 & 2326738 & 119 & 7797 \\
 \hline
random trees & 10 & 20 & 140 & 2.86 & 0.03 & 0.01 & 0.02 & 0.01 & 0.00 & 0.00 & 0.00 & 984 & 39 & 181 \\
& 50 & 20 & 189 & 2.12 & 0.05 & 0.02 & 0.03 & 0.03 & 0.02 & 0.02 & 0.02 & 1693 & 39 & 636 \\
& 100 & 20 & 190 & 2.11 & 0.05 & 0.02 & 0.02 & 0.07 & 0.04 & 0.06 & 0.07 & 1389 & 39 & 1065 \\
& 500 & 20 & 191 & 2.09 & 0.04 & 0.01 & 0.02 & 0.31 & 0.55 & 0.63 & 0.42 & 958 & 39 & 3495 \\
& 10 & 40 & 351 & 4.56 & 1.25 & 1.21 & 2.01 & 0.01 & 0.01 & 0.01 & 0.01 & 72254 & 79 & 406 \\
& 50 & 40 & 730 & 2.19 & 4.70 & 1.90 & 2.23 & 0.06 & 0.09 & 0.05 & 0.04 & 129486 & 79 & 1546 \\
& 100 & 40 & 774 & 2.07 & 7.19 & 3.03 & 3.57 & 0.13 & 0.31 & 0.15 & 0.15 & 221021 & 79 & 2755 \\
& 500 & 40 & 781 & 2.05 & 6.86 & 2.59 & 3.05 & 0.65 & 5.25 & 2.91 & 2.61 & 191907 & 79 & 10529 \\
& 10 & 60 & 565 & 6.37 & 42.63 & 52.02 & 97.05 & 0.02 & 0.01 & 0.01 & 0.02 & 3530433 & 119 & 628 \\
& 50 & 60 & 1561 & 2.31 & 427.16 & 221.89 & 282.25 & 0.10 & 0.25 & 0.09 & 0.09 & 11138659 & 119 & 2542 \\
& 100 & 60 & 1716 & 2.10 & 545.29 & 241.43 & 271.23 & 0.21 & 0.84 & 0.30 & 0.43 & 12928819 & 119 & 4535 \\
& 500 & 60 & 1771 & 2.03 & 882.45 & 427.09 & 459.36 & 1.01 & 14.95 & 8.85 & 7.54 & 22754257 & 119 & 17178 \\
 \hline
phylo. trees & 160 & 68 & 82 & 56.39 & 0.07 & 0.04 & 0.05 & 0.49 & 0.02 & 0.02 & 0.11 & 144 & 134 & 144 \\
\bottomrule
\end{tabular}}}
\caption{Table listing the best runtime of the different algorithms provided to compute the mergegrams of the trees $\caT$ as well as the join-cliquegram $\caC$ and join-facegram $\caF$ of these trees for different datasets and number of trees and number of taxas. The number of unique values in the minimizer together with the average number of new edges per iteration of constructing the auxiliary graph for the cliquegram give a rough estimate of how complex the computations for the cliquegram will be. The sizes of the mergegrams for the treegrams are the mean size of each treegram.}
\label{tab:runtimes_long}
\end{table}

As already noted previously, the join-facegram computations are largely benefitting from the speed-up by using the informations from the labeled mergegrams of the treegrams. 
Comparing the different algorithms, we can see that the algorithm 2 for the join-cliquegram perfoms worse than both versions of algorithm 1. 
This is a result of the fact, that algorithm 2 is more optimized for distance matrices which have nearly all different values as well as a potential parallel implementation. 
Similary, algorithm 2 for the facegram also relies on the computations of the matrices directly and can be more easily parallelized. For single core perfomance it perfoms worse than algorithm 1. In general, it should be noted how large the differences in the number of points in the mergegram of the join is compared to the facegram.

\subsection{Maximal number of cliques in a cliquegram}

The maximal number of maximal simplices in a facegram over $X$ with $n=|X|$ can be $2^n = \sum_{k=1}^n {\binom{n}{k}}$. 
For a cliquegram, due to the fact that a maximal clique appears as soon as all of its faces of codimension 1 are added at one time $t$. Hence, 

\begin{proposition}\label{prop:maximalnumbercliquegram}
Let $\caC_X$ a cliquegram over $X$ with $n=|X|$. The maximal number of unique maximal cliques in $\caC_X$ is bounded above by 
\begin{equation}\label{eq:number_max_cliques}
N_{cl} =  2^n -1 - \sum_{k=2}^{n-1}\left\lceil {\binom{n}{k+1}} / (n-k) \right\rceil .
\end{equation}
This bound is tight. In particular, we have $O(N_{cl}) = O(2^n)$.
\end{proposition}

\begin{proof}
The statement of the proposition follows directly from the following observations 
\begin{enumerate}
\item the number of different $(k-1)$-simplices is $\binom{n}{k}$  for $k=1, \ldots n$,
\item any $k-1$-simplex is a face of $n-k$ many $k$-simplices,
\item the maximal number of $(k-1)$-simplices in $\caC_X$ is ${\binom{n}{k}} - \lceil {\binom{n}{k+1}} / (n-k) \rceil$,
\end{enumerate}

and the fact that 
\[ 
n+1 + \sum_{k=2}^{n-1} {\binom{n}{k}} - \left\lceil {\binom{n}{k+1}} / (n-k) \right\rceil
= N_{cl}
\]

(1) and (2) are clear if we consider a $(k+1)$-simplex to be a subset of cardinality $k$ in $X$ and that there are only $n-k$ elements left in $X$ for a $(k-1)$-simplex to become a $k$-simplex. 
These then imply (3) by computing how many $(k-1)$-simplices we would minimally need to be the "last" subsets of codimension $1$ to enter the filtration and thus add the $k$-simplex they are contained in.

We can find lower and upper bounds for the expression \textit{Eq.~\ref{eq:number_max_cliques}}, namely,

\begin{align*}
    2(2^n-n-2)/(n+1) & = \sum_{k=2}^{n-1} {\binom{n}{k}} / (n-k+1)
    < \sum_{k=2}^{n-1} \left\lceil {\binom{n}{k+1}} / (n-k) \right\rceil \\
 & < \sum_{k=2}^{n-1} \left(1 +  {\binom{n}{k+1}} / (n-2) \right) 
     = \frac{1}{n-2} \left( 2^n + n^2 - 5n +2 \right).
\end{align*} 

For the total numbers, we have for the lower bound
\[ O(2^n - 1 - 2(2^n-n-2)/(n+1)) = O(2^n(1-2/(n+1))) = O(2^n), \]
and for the upper case, 
\[O(2^n - 1 - (2^n + n^2 - 5n +2)/(n-2)) = O( 2^n (1- 1/(n-2))) = O(2^n),\]
therefore, $O(N_{cl}) = O(2^n)$ 
\end{proof}

\end{document}